\newcommand\rI{\mathrm I}
\newcommand\rII{\mathrm {II}}
\newcommand\rIII{\mathrm {III}}
\newcommand\rIV{\mathrm {IV}}
\newcommand\sF{\mathscr F}
\newcommand\cO{\mathcal O}
\newcommand\cD{\mathcal D}
\newcommand\cC{\mathcal C}
\newcommand\cZ{\mathcal Z}
\newcommand\cW{\mathcal W}
\newcommand\cX{\mathcal X}
\newcommand\cY{\mathcal Y}
\newcommand\cA{\mathcal A}
\newcommand\cP{\mathcal P}
\newcommand\cQ{\mathcal Q}
\newcommand\cE{\mathcal E}
\newcommand\bA{\mathbb A}
\newcommand\bP{\mathbb P}
\newcommand\bZ{\mathbb Z}
\newcommand\bQ{\mathbb Q}
\newcommand\bC{\mathbb C}
\newcommand\bG{\mathbb G}
\newcommand\bF{\mathbb F}
\newcommand\fS{\mathfrak S}
\newcommand\numero{n\textsuperscript{o}\,}
\DeclareMathOperator{\Km}{Km}
\DeclareMathOperator{\Bl}{Bl}
\DeclareMathOperator{\Spec}{Spec}
\DeclareMathOperator{\charac}{char}
\DeclareMathOperator{\Pic}{Pic}
\DeclareMathOperator{\Cl}{Cl}
\DeclareMathOperator{\ord}{ord}
\DeclareMathOperator{\lcm}{lcm}
\DeclareMathOperator{\Gal}{Gal}
\DeclareMathOperator{\ch}{ch}
\DeclareMathOperator{\CH}{CH}
\DeclareMathOperator{\Tr}{Tr}
\DeclareMathOperator{\NS}{NS}
\newcommand\topo{\text{top}}
\newcommand\un{\text{un}}
\newcommand\sm{\text{sm}}
\newcommand\et{\mathrm{\acute et}}
\newcommand\crys{\mathrm{crys}}
\newcommand\st{\mathrm{st}}
\newcommand\id{\mathrm{id}}
\newcommand\cl{\mathrm{cl}}
\newcommand\POK{\bP_{\cO_K}}
\theoremstyle{plain}
\newtheorem{theorem}{Theorem}[section]
\newtheorem{lemma}[theorem]{Lemma}
\newtheorem{proposition}[theorem]{Proposition}
\newtheorem{corollary}[theorem]{Corollary}
\newtheorem{claim}[theorem]{Claim}
\theoremstyle{definition}
\newtheorem{remark}[theorem]{Remark}
\newtheorem{definition}[theorem]{Definition}
\begin{document}

\title[On good reduction of some K3 surfaces]
{On good reduction of some K3 surfaces related to abelian surfaces}
\author{Yuya Matsumoto}
\address{Graduate school of mathematical sciences, University of Tokyo}
\email{ymatsu@ms.u-tokyo.ac.jp}
\date{2012/02/11}

\begin{abstract}
The N\'eron--Ogg--\v Safarevi\v c criterion for abelian varieties tells that 
whether an abelian variety has good reduction or not 
can be determined from the Galois action on its $l$-adic \'etale cohomology.
We prove an analogue of this criterion 
for some special kind of K3 surfaces 
(those which admit Shioda--Inose structures of product type), 
which are deeply related to abelian surfaces.
We also prove a $p$-adic analogue.
This paper includes Ito's unpublished result for Kummer surfaces.
\keywords{good reduction \and K3 surfaces \and Kummer surfaces \and Shioda--Inose structure \and hogehogehoge}
\subjclass[2000]{Primary~11G25, Secondary~14G20,~14J28} 
\end{abstract}

\maketitle

\section*{Introduction}
We consider the problem of determining
whether a variety over a local field have good reduction
in terms of the Galois action on the $l$-adic \'etale cohomology of the variety.

An ideal situation is 
the case of abelian variety:
the reduction type (good or bad) is completely determined 
by the Galois action on the (first) $l$-adic \'etale cohomology group 
(Theorem \ref{thm:critAV}). 
In 2001, Ito obtained an analogous result on Kummer surfaces (Theorem~\ref{thm:critetKum}).

In this paper, we prove analogous results for another class of K3 surfaces:
those which admit Shioda--Inose structures of product type
(see Definition~\ref{def:SIP}), 
which are closely related to abelian surfaces.

We state the main theorems. 
First let us fix the notation: 
Let $K$ be a local field (that is, a complete discrete valuation field with perfect residue field)
and denote by $\cO_K$ the ring of integers, by $p$ the residue characteristic, and by $G_K$ the absolute Galois group of $K$.
A proper smooth variety $X$ over $K$ is said to have \emph{good reduction} over $K$
if there exists a proper smooth scheme $\cX$ over $\cO_K$ having $X$ as the generic fiber.
A $G_K$-module is said to be \emph{unramified}
if the inertia subgroup $I_K$ of $G_K$ acts on it trivially.
Our first result is the following:

\begin{theorem}
 \label{thm:critetSI}
Let $K$ be a local field with residue characteristic $p \neq 2, 3$ 
and $l$ a prime number different from $p$.
Let $Y$ be a K3 surface over $K$ admitting a Shioda--Inose structure of product type.
If $H^2_\et(Y_{\overline K}, \bQ_l)$ is unramified, 
then $Y_{K'}$ has good reduction for some finite extension $K'$ 
which is purely inseparable\footnote{
 Of course there is no nontrivial purely inseparable extension if $\charac K = 0$.
} over a finite extension of $K$ of ramification index $1$, $2$, $3$, $4$ or $6$.
\end{theorem}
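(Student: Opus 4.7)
My plan is to transfer the problem from $Y$ to the associated abelian surface $A$ via the Shioda--Inose structure, deduce good reduction of $A$ by the N\'eron--Ogg--\v Safarevi\v c criterion, and then reconstruct a smooth integral model of $Y$.

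First, unwinding Definition~\ref{def:SIP}, a Shioda--Inose structure of product type on $Y$ provides a Nikulin involution whose desingularized quotient is the Kummer surface $X = \Km(A)$ of an abelian surface $A = E_1 \times E_2$, together with a Hodge-isometric identification (up to rescaling by $2$) of the transcendental lattices of $Y$ and $X$. Passing to $l$-adic cohomology, this yields a $G_K$-equivariant isomorphism between the transcendental parts of $H^2_\et(Y_{\overline K}, \bQ_l)$ and $H^2_\et(X_{\overline K}, \bQ_l)$ (up to Tate twist).

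Next, if $H^2_\et(Y_{\overline K}, \bQ_l)$ is unramified, then after a finite \emph{unramified} extension over which $\NS(Y_{\overline K})$ and $\NS(X_{\overline K})$ become $G_K$-invariant, the algebraic parts of $H^2$ are automatically unramified, and the isomorphism above forces the transcendental part of $H^2(X_{\overline K}, \bQ_l)$ to be unramified as well. Ito's theorem (Theorem~\ref{thm:critetKum}) applied to $X$ then yields good reduction of $X$ after a finite extension; equivalently, via the K\"unneth decomposition $H^2(A) \cong H^2(E_1) \oplus H^1(E_1) \otimes H^1(E_2) \oplus H^2(E_2)$, each $H^1(E_i, \bQ_l)$ is unramified after a further finite extension, whence by Theorem~\ref{thm:critAV} each $E_i$ has good reduction. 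The minimal extension needed for potentially good reduction of an elliptic curve in residue characteristic $\neq 2, 3$ has ramification index in $\{1, 2, 3, 4, 6\}$; controlling the compositum of the two such extensions to stay in this same set (rather than reaching their l.c.m., which could be $12$) will require using the product-type hypothesis more carefully.

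Finally, let $K''$ be the extension obtained so far over which $A$ has good reduction, and let $\cA$ be the corresponding smooth proper model over $\cO_{K''}$. I would execute the Shioda--Inose construction integrally: form the quotient $\cA / \langle [-1] \rangle$, blow up the image of the $2$-torsion to obtain a smooth model of $X = \Km(A)$ (using $p \neq 2$), and then reverse the Nikulin involution via the auxiliary double cover to recover a smooth proper model of $Y$. This last reconstruction is the main obstacle: the line bundles and double covers defining the Shioda--Inose structure must be shown to extend integrally in a way that keeps their resolutions smooth over $\cO_{K'}$, and the data defining the cover (e.g.\ a square root of a section) may only become available after a purely inseparable extension $K'/K''$. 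The hypotheses $p \neq 2, 3$ ensure that the Kummer and cyclic covers involved are tame, so that the integral resolutions indeed yield a proper smooth model of $Y$ over $\cO_{K'}$.
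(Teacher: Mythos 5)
Your overall architecture (pass from $Y$ to the Kummer surface $X=\Km(E_1\times E_2)$, invoke Ito's theorem and N\'eron--Ogg--\v Safarevi\v c, then rebuild an integral model of $Y$) is the same as the paper's, but two steps you rely on have genuine gaps. The first is your claim that $\NS(X_{\overline K})$ becomes $G_K$-invariant after a finite \emph{unramified} extension, so that the algebraic part of $H^2_\et(X_{\overline K},\bQ_l)$ is automatically unramified. This is precisely what fails in general and is the only place a ramified extension is forced: among the algebraic classes on $X$ are the eight exceptional curves over the fixed points of the Nikulin involution on $Y$, and inertia permutes these classes through its action on the fixed points, which (after putting the $\rII^*$-fibration of $Y$ in the normal form $y^2=x^3+ax+(b_{-1}t^{-1}+b_0+b_1t)$) are the $2$-torsion points of the two fibres over $\pm\sqrt{b'}$. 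Unramifiedness of $H^2_\et(Y_{\overline K},\bQ_l)$ says nothing about this action, since these points only become divisor classes on $X$. The paper controls it through the decomposition $H^2_\et(X_{\overline K},\bQ_l)\cong H^2_\et(Y_{\overline K},\bQ_l)^{\langle\iota\rangle}\oplus\bigoplus_{i=1}^{8}\bQ_l(-1)[E_i]$ and a tameness argument (inertia cyclic in $\fS_3$, the two fibres conjugate when $\sqrt{b'}\notin K$), which is exactly what yields the ramification index $1,2,3,4,6$. If your claim were correct you would be proving the stronger statement that no ramified extension is needed, which the paper explicitly does not know; and your alternative bookkeeping of ramification through potential good reduction of $E_1$, $E_2$ hits the $\lcm=12$ problem that you acknowledge and leave unresolved. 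In the paper's order of argument the elliptic curves never cause ramification: once $H^2(X)$ is unramified over $K_H$, Ito's theorem and Lemma~\ref{lem:prodabelsurf} give good reduction of $X$ and of $E_1\times E_2$ after \emph{unramified} extensions only.

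The second gap is the final reconstruction of a smooth proper model $\cY$ from $\cX$, which you propose to do by extending the double cover $Y\to X$ integrally and which you yourself flag as the main obstacle. This is the route that Remark~\ref{rem:2morph} explains does not work: the relevant elliptic fibration on $X$ has singular fibres of type $\{\rII^*,\rI^*_c,\rI^*_{c'}\}$ or $\{\rII^*,\rI^*_0,\rIV^*\}$ and the branch locus of $Y\to X$ is a union of multiplicity-one components of such fibres; the fibre types may differ between the generic and special fibres of $\cX$, so the branch divisor does not extend with control, and tameness of the cover ($p\neq 2,3$) does not address this. The paper instead realizes $Y$ as a \emph{quotient} of $X$ (Shioda's sandwich): it uses the $24$ rational curves on $\Km(C_1\times C_2)$, which extend to copies of $\bP^1_{\cO_K}$ inside $\cX$ exactly because the abelian model is a product of elliptic curves over $\cO_K$, constructs the relative elliptic fibration of Proposition~\ref{prop:makeellfibrel}, defines the involution $\iota_\cX=\iota_1\iota_2$ with its eight fixed $\cO_K$-sections, and takes $\cY=\widetilde\cX/\langle\tilde\iota_\cX\rangle$ after blowing them up, checking smoothness of the special fibre using $p\neq 2$ and Lemma~\ref{lem:blupatsmpt}. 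Without an argument of this kind (or a genuinely new way to extend the cover), your final step does not go through.
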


At present we do not know whether field extension is necessary.

We also prove results concerning $p$-adic cohomology,
for both Kummer surfaces and K3 surfaces with Shioda--Inose structure of product type.
Here, and whenever we mention $p$-adic cohomology, 
we assume that $K$ is of mixed characteristic $(0,p)$.

\begin{theorem}
 \label{thm:critcrysKum}
Let $K$ be a local field with residue characteristic $p \neq 2$ 
and $X$ a Kummer surface over $K$.
Assume that $X$ has at least one $K$-rational point.
If $H^2_\et(X_{\overline K}, \bQ_p)$ is crystalline, 
then $X_{K'}$ has good reduction for some finite unramified extension $K'/K$.
\end{theorem}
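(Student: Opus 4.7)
The strategy is to reduce, via the Kummer construction, good reduction of $X$ to good reduction of an associated abelian surface, to which we can apply the $p$-adic analogue of the N\'eron--Ogg--\v Safarevi\v c criterion for abelian varieties (Coleman--Iovita, Breuil). The first task is to recover the abelian surface: over $\overline K$ the surface $X$ has the form $\Km(A)$ for an abelian surface $A$ obtained by contracting the sixteen exceptional $(-2)$-curves to nodes and then taking the double cover branched over them. The hypothesis $X(K)\neq\emptyset$ is what lets us descend $A$ (up to a quadratic twist) to $K$: a $K$-point of $X$ not lying on any $(-2)$-curve gives a $K$-point of $A/\{\pm 1\}$, which we promote to an origin of $A$ after at most an unramified base change.

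Having $A$ over $K$, I would use the Galois-equivariant decomposition
$$
H^2_\et(X_{\overline K},\bQ_p)\;\cong\;H^2_\et(A_{\overline K},\bQ_p)\;\oplus\;V_{16},
$$
where $V_{16}$ is spanned by the cycle classes of the sixteen exceptional divisors (permuted by $G_K$ through its action on $A[2]$, twisted by $\bQ_p(-1)$). Since the category of crystalline $G_K$-representations is closed under subobjects, crystallinity of the left-hand side forces crystallinity of $\wedge^2 V = H^2_\et(A_{\overline K},\bQ_p)$, where $V := H^1_\et(A_{\overline K},\bQ_p)$. To recover crystallinity of $V$ itself, I would pass to a finite extension $L/K$ over which $A$ has semistable reduction (Grothendieck); then $V|_{G_L}$ is semistable with some monodromy operator $N$, and since $\wedge^2 V$ remains crystalline over $L$, the induced monodromy $N\wedge 1+1\wedge N$ on $\wedge^2 V$ must vanish. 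A short linear-algebra check exploiting $\dim V=4$ yields $N=0$, hence $V|_{G_L}$ is crystalline and $A_L$ has good reduction.

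This yields only potential good reduction of $A$, and the subtle point is the descent to an \emph{unramified} extension of $K$. The crucial flexibility is that $\Km(A)=\Km(A')$ for any quadratic twist $A'$ of $A$, with $H^2(A')\cong H^2(A)$ as $G_K$-representations, so we are free to replace $A$ by such a twist. I would show that some twist has good reduction over an unramified extension $K'/K$: the crystallinity of $H^2(A)$ as a $G_K$-representation (and not only after restriction to $G_L$) controls the obstruction, and the hypothesis $p\neq 2$ guarantees that no wild quadratic character prevents descent to the maximal unramified extension. Once $A'$ has good reduction over $K'$ with N\'eron model $\cA'$, the quotient $\cA'/\{\pm 1\}$ is proper flat over $\cO_{K'}$ with sixteen rational double points along the $2$-torsion sections, and blowing these up produces a smooth proper $\cO_{K'}$-model of $X_{K'}$.

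The hardest step will be this quadratic-twist descent: the linear-algebra passage from $\wedge^2 V$ to $V$ only produces potential crystallinity, and extracting the fact that an unramified extension already suffices requires tracking the ramification of $V$ modulo quadratic twists. This is precisely the place where the assumption $p\neq 2$ is essential, since a wild quadratic character would not be killable by an unramified extension.
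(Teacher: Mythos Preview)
Your overall strategy matches the paper's: reduce to the abelian case, deduce crystallinity of $\bigwedge^2 V$ for $V = H^1_\et(A_{\overline K}, \bQ_p)$, use linear algebra plus a quadratic twist to make $V$ itself crystalline, apply Coleman--Iovita, and build the smooth model of $X$ from that of the twisted $A$. The monodromy computation ($N \wedge 1 + 1 \wedge N = 0 \Rightarrow N = 0$ via Lemma~\ref{lem:wedge2} applied to $\exp N$) and the quadratic-twist idea both appear in the paper.

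There is, however, a real gap in your first step. You assert that the rational point on $X$ lets you descend $A$ to $K$ ``after at most an unramified base change,'' but this is not justified. The sixteen exceptional $(-2)$-curves depend on the choice of Kummer structure, and nothing you have said prevents inertia from acting nontrivially on them. In the $l$-adic setting, Lemma~\ref{lem:-2curves} uses unramifiedness of $H^2_\et(X_{\overline K}, \bQ_l)$ and the cycle class map to pin each $(-2)$-curve down over $K^{\un}$; but crystallinity of $H^2_\et(X_{\overline K}, \bQ_p)$ gives no direct control over the inertia action on $\NS(X_{\overline K})$, so there is no immediate $p$-adic analogue.

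The paper resolves this by running the argument in two passes. First (step (Km1)) one takes $A$ over an \emph{arbitrary} finite extension, carries out (Km2)--(Km4) there, and obtains potential good reduction of $X$. Only then (step (Km5)) does one invoke Proposition~\ref{prop:crysunramext}: for a K3 surface with potential good reduction and crystalline $H^2_p$, the $l$-adic $H^2$ is in fact unramified. The proof of that proposition is not soft---it compares traces of inertia on $l$-adic and crystalline cohomology via a correspondence on the good model, the Lefschetz trace formula, and the crystalline comparison isomorphism. This $p$-to-$l$ bridge then allows one to apply Lemma~\ref{lem:-2curves} after the fact and redo (Km1) over an unramified extension. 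Your sketch is missing this ingredient; the quadratic twist alone cannot supply it, since the twist only repairs the $\pm 1$ ambiguity between $V$ and $\bigwedge^2 V$ once $A$ is already defined over the correct base.

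Your quadratic-twist paragraph is in the right direction but vague. The paper's precise mechanism: take $L/K$ totally ramified with $V|_{G_L}$ semistable, set $D = D_{\st,L}(V)$; crystallinity of $\bigwedge^2 V$ over $K$ gives $\dim_{K_0}(\bigwedge^2 D)^{\Gal(L/K)} = \dim_{K_0} \bigwedge^2 D$, so $\Gal(L/K)$ acts trivially on $\bigwedge^2 D$, and Lemma~\ref{lem:wedge2} forces the action on $D$ to factor through $\{\pm 1\}$. The at-most-quadratic subextension $M/K$ so obtained furnishes the twist $A'$ with $V'$ semistable over $K$ itself.
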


\begin{theorem}
 \label{thm:critcrysSI}
Let $K$ be a local field with residue characteristic $p \neq 2,3$ 
and $Y$ a K3 surface over $K$ with Shioda--Inose structure of product type.
If $H^2_\et(Y_{\overline K}, \bQ_p)$ is crystalline, 
then $Y_{K'}$ has good reduction for some finite extension $K'/K$
of ramification index $1$, $2$, $3$, $4$ or $6$.
\end{theorem}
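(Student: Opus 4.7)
The plan is to adapt the proof of the $\ell$-adic version, Theorem~\ref{thm:critetSI}, by replacing the N\'eron--Ogg--\v Safarevi\v c criterion (Theorem~\ref{thm:critAV}) with its crystalline counterpart: an abelian variety $A$ over $K$ has good reduction if and only if $H^1_\et(A_{\overline K},\bQ_p)$ is crystalline (Coleman--Iovita, Breuil). The overall structure of the argument is the same, so the work lies in transferring crystallinity across the Shioda--Inose correspondence and then back, with the correct control on ramification.

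After a finite extension $K_1/K$ of ramification index in $\{1,2,3,4,6\}$ (these being the orders needed to trivialize the automorphisms of elliptic curves entering the Shioda--Inose correspondence), we may assume the Shioda--Inose structure of product type is defined over $K_1$ and exhibits an abelian surface $A = E_1\times E_2$ explicitly related to $Y$, inducing a $G_{K_1}$-equivariant isomorphism $T(Y)\otimes\bQ_p \cong T(A)\otimes\bQ_p$ between transcendental parts of $H^2$ (up to a Tate twist), exactly as in the $\ell$-adic setting. Since the class of crystalline representations is stable under direct summands and Tate twists, and since after a further finite extension the N\'eron--Severi subspace of $H^2$ is a sum of copies of $\bQ_p(-1)$ and hence crystalline, the crystalline hypothesis on $H^2_\et(Y_{\overline K},\bQ_p)$ forces the transcendental, and then the whole, of $H^2_\et(A_{\overline K},\bQ_p)$ to be crystalline. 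One then descends from crystallinity of the summand $H^1(E_1)\otimes H^1(E_2)\subset H^2(A)$ to crystallinity of each factor $H^1(E_i)$, using that the determinants $\det H^1(E_i)\cong\bQ_p(-1)$ are already crystalline. By the crystalline criterion for elliptic curves, each $E_i$, and hence $A$, acquires good reduction over $K_1$. Finally, as in the proof of Theorem~\ref{thm:critetSI}, a smooth $\cO_{K_1}$-model of $\Km(A)$ is obtained by the standard Kummer construction (using $p\neq 2$), and a smooth model of $Y$ is constructed from it by following the explicit Shioda--Inose diagram (the condition $p \neq 2,3$ enters here).

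The main obstacle is the descent from crystallinity of $H^1(E_1)\otimes H^1(E_2)$ to that of each $H^1(E_i)$ in the penultimate step. Unlike the $\ell$-adic unramifiedness property exploited in Theorem~\ref{thm:critetSI}, crystallinity is not preserved under arbitrary tensor factorizations, so one must argue directly with filtered $\varphi$-modules (or weakly admissible modules), exploiting the fact that the total determinant is a known Tate twist. Once this step is in place, the passage from good reduction of $A$ to good reduction of $Y$ is formal and mirrors the $\ell$-adic argument of Theorem~\ref{thm:critetSI} verbatim; no new arithmetic input beyond the crystalline criterion for abelian varieties is required.
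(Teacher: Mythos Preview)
Your outline diverges from the paper's in two places, and both contain real gaps.

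First, the ramification bookkeeping is off. You assert that over an extension $K_1/K$ of ramification index in $\{1,2,3,4,6\}$ the Shioda--Inose data (the $\rII^*$ fibers, the section, the involution, the curves $E_1,E_2$) is already defined, and you attribute this to ``trivializing automorphisms of elliptic curves''. That is not where the bound comes from. In the paper the $\{1,2,3,4,6\}$ arises only later, from the $2$-torsion of two specific fibers controlling the permutation action on the eight exceptional curves on $X$. Defining the $\rII^*$ fibers and the section themselves requires Lemma~\ref{lem:-2curves}, which is an $l$-adic statement: it needs $H^2_\et(Y_{\overline K},\bQ_l)$ to be unramified. From the crystalline hypothesis alone you have no a priori control on the field of definition of these $(-2)$-curves. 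The paper handles this by a bootstrap: one first allows an uncontrolled extension in step~(SI1), proves potential good reduction through steps~(SI2)--(SI4), and only then invokes Proposition~\ref{prop:crysunramext} (crystalline $+$ potential good reduction $\Rightarrow$ $l$-adically unramified) to conclude in step~(SI5) that the initial extension can be taken unramified. Your proposal has no substitute for this step.

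Second, you correctly flag the descent from crystallinity of $H^1(E_1)\otimes H^1(E_2)$ to that of each $H^1(E_i)$ as the main obstacle, but you do not actually carry it out, and it is not clear it can be done in that form. The paper does not attempt this. Instead it passes through the Kummer surface $X$ and reduces to Theorem~\ref{thm:critcrysKum}, whose proof works with the \emph{full} four-dimensional $V=H^1_\et(A_{\overline K},\bQ_p)$ and $\bigwedge^2 V$: one uses potential semi-stability of $A$, then Lemma~\ref{lem:wedge2} applied to the $\Gal(L/K)$-action on $D_{\st,L}(V)$ to force a quadratic twist $A'$ with $V'$ semi-stable over $K$, and finally the same lemma applied to $\exp N$ to kill the monodromy. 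This $\bigwedge^2$ argument is what replaces your unproved tensor-factor descent; it is the genuinely new $p$-adic ingredient and you should incorporate it rather than try to split $A$ into elliptic factors at that stage.
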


As an immediate corollary of these theorems (and Theorem~\ref{thm:critgen}), we have a criterion for potential good reduction. 
The adjective ``potential'' means ``after taking a finite extension of the base field''.
\begin{corollary}
 \label{cor:potcritSI}
Let $X$ be a K3 surface which is one of the above two types. Then the following properties are equivalent: 
\begin{enumerate}
 \item The surface $X$ has potential good reduction.
 \item For some (any) prime $l \neq p$, 
  the second $l$-adic \'etale cohomology of $X$ is potentially unramified.
 \item The second $p$-adic \'etale cohomology of $X$ is potentially crystalline.
\end{enumerate}
\end{corollary}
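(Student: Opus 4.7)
The plan is to prove the cycle $(1) \Rightarrow (2) \Rightarrow (1)$ and $(1) \Rightarrow (3) \Rightarrow (1)$, using Theorem~\ref{thm:critgen} (the ``easy'' direction, good reduction implies the cohomological unramified/crystalline property, which is preserved under finite base change) together with the main Theorems~\ref{thm:critetSI},~\ref{thm:critetKum},~\ref{thm:critcrysSI}, and~\ref{thm:critcrysKum}.

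The implications $(1) \Rightarrow (2)$ and $(1) \Rightarrow (3)$ are immediate from Theorem~\ref{thm:critgen}: if $X_L$ has good reduction for some finite extension $L/K$, then $H^2_\et(X_{\overline K}, \bQ_l)$ is unramified as a $G_L$-module for $l \neq p$, and $H^2_\et(X_{\overline K}, \bQ_p)$ is crystalline as a $G_L$-module. For the converse implications, I would argue as follows.

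For $(2) \Rightarrow (1)$: suppose there is a finite extension $L/K$ such that $H^2_\et(X_{\overline K}, \bQ_l)$ is unramified as a $G_L$-module. The key observation is that the property of being a Kummer surface, respectively a K3 surface admitting a Shioda--Inose structure of product type, is stable under finite base extension (these structures descend to, hence exist over, $L$ as well). Hence we may apply Theorem~\ref{thm:critetSI} or Theorem~\ref{thm:critetKum} to $X_L$ in place of $X_K$. This produces a further finite extension $L'/L$ over which $X_{L'}$ has good reduction, so $X$ has potential good reduction. The same pattern yields $(3) \Rightarrow (1)$, invoking Theorem~\ref{thm:critcrysSI} or Theorem~\ref{thm:critcrysKum}; in the Kummer case one first enlarges $L$ (finitely) so that $X_L$ acquires an $L$-rational point, which is always possible for a geometrically integral variety.

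I do not anticipate any serious obstacle: once the theorems stated above are in hand, the only thing to check is that the hypotheses of those theorems are preserved by finite base change. For the geometric hypothesis (being Kummer, or admitting a Shioda--Inose structure of product type, in the sense of Definition~\ref{def:SIP}) this is essentially tautological. For the cohomological hypothesis it follows from the definitions of ``potentially unramified'' and ``potentially crystalline''. The rational-point hypothesis in Theorem~\ref{thm:critcrysKum} is the only mildly awkward point, but it is easily arranged by a further finite extension, which does not affect the statement of potential good reduction.
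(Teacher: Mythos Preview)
Your argument is correct and matches the paper's own treatment, which simply declares the corollary immediate from Theorems~\ref{thm:critetSI}, \ref{thm:critetKum}, \ref{thm:critcrysKum}, \ref{thm:critcrysSI} together with Theorem~\ref{thm:critgen}. One small omission: Theorem~\ref{thm:critetKum} (the $l$-adic Kummer case) also carries the rational-point hypothesis, so the same finite enlargement of $L$ you invoke for $(3)\Rightarrow(1)$ is needed in $(2)\Rightarrow(1)$ as well.
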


There is an application to the reduction of singular K3 surface. 
Recall that a K3 surface over a field of characteristic $0$ is called \emph{singular} if it has the maximum possible Picard number $20$ 
(note that the word singular here does not mean non-smooth).

\begin{corollary}
 \label{cor:singk3}
Any singular K3 surface has potential good reduction.
\end{corollary}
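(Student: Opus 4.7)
My plan is to apply Corollary~\ref{cor:potcritSI} after checking its \'etale hypothesis at each finite place. A singular K3 surface $X$ is rigid, so classical results of Shioda--Inose and Shioda--Mitani show that $X$ is defined over a number field $F$ and that $X_{\overline F}$ admits a Shioda--Inose structure whose associated abelian surface is isogenous to a product $E_1 \times E_2$ of elliptic curves with complex multiplication by the same imaginary quadratic field (the one determined by the transcendental lattice $T(X)$); in particular, the structure is of product type in the sense of Definition~\ref{def:SIP}, and by finiteness of the relevant Galois orbits it descends to a finite extension of $F$.

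Fix a finite place $\mathfrak{p}$ of $F$ of residue characteristic $p \notin \{2,3\}$, set $K = F_\mathfrak{p}$, and pick $l \neq p$. I would then verify that $H^2_\et(X_{\overline K}, \bQ_l)$ is potentially unramified by splitting it Galois-equivariantly as the algebraic part $\NS(X_{\overline K}) \otimes \bQ_l$, on which $G_K$ acts through the finite quotient permuting the $20$ generating divisor classes, plus the transcendental part, which the Shioda--Inose isomorphism identifies (up to a Tate twist) with the transcendental part of $H^2_\et((E_1 \times E_2)_{\overline K}, \bQ_l)$, itself sitting inside $\wedge^2 H^1_\et((E_1 \times E_2)_{\overline K}, \bQ_l)$. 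Each factor $H^1_\et(E_{i,\overline K}, \bQ_l)$ is potentially unramified because CM elliptic curves have everywhere potential good reduction (N\'eron--Ogg--\v Safarevi\v c applied to $E_i$). Both summands are therefore potentially unramified, and Corollary~\ref{cor:potcritSI} delivers potential good reduction at $\mathfrak p$.

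The main obstacle I expect is the residue-characteristic restriction $p \neq 2, 3$ built into Theorem~\ref{thm:critetSI}: at places above $2$ and $3$ the cited corollary does not apply directly. At such places I would instead argue geometrically, starting from a good model of $A = E_1 \times E_2$ (which exists after a finite extension because the $E_i$ are CM), forming the Kummer quotient $\Km(A)$, and then realising $X$ via the Shioda--Inose double cover, absorbing the wild behaviour of the involution $[-1]$ and of the $2$-torsion coverings into a further finite, possibly wildly ramified, extension. A secondary technical point is to make sure that the Shioda--Inose structure of product type really descends from $\overline{F}$ to a finite extension of $K$ and not merely to $\bC$; this follows from the rigidity of singular K3 surfaces and the characterisation of $X$ by $T(X)$.
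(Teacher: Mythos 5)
Your argument at places of residue characteristic $p \neq 2,3$ looks correct, but it is a genuinely different route from the paper's. You verify the hypothesis of Corollary~\ref{cor:potcritSI}: split $H^2_\et(X_{\overline K}, \bQ_l)$ into $\NS(X_{\overline K}) \otimes \bQ_l$ (potentially unramified because the Galois action on the finitely generated group $\NS(X_{\overline K})$ factors through a finite quotient) plus the rank-two transcendental part, identified after a finite extension with the transcendental part of $H^2_\et$ of the CM abelian surface, which is potentially unramified since CM elliptic curves have potential good reduction. The paper never checks the cohomological hypothesis at all: it takes models with good reduction of the CM curves $C_1, C_2$, forms the Kummer model as in Lemma~\ref{alem:Kum}, and then runs Step (4) of Section~\ref{sec:lproof} (the relative fibration of Proposition~\ref{prop:makeellfibrel}, the involution $\iota_\cX$, the blow-up of its eight fixed sections, and the quotient) to produce a smooth proper model $\cY$ directly. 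The geometric route is shorter once the CM input is in hand, and it avoids the one ingredient your route needs but the paper does not supply: the Galois-equivariant $l$-adic version of the Shioda--Inose identification $T_Y \cong T_A$. That identification is standard (and could also be replaced by the paper's decomposition of $H^2$ of the Kummer surface together with the fact that the $\iota$-anti-invariant part of $H^2(Y)$ is spanned by algebraic classes), but as written it is an extra step you would have to justify.

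The genuine gap is your treatment of residue characteristics $2$ and $3$. Passing to a further finite, possibly wildly ramified, extension cannot ``absorb the wild behaviour'': the obstruction is that the residue characteristic divides the order of the group being quotiented by, so the quotient of a smooth $\cO_K$-model by $[-1]$ (or by the Shioda--Inose involution) need not have smooth special fiber --- wild quotient singularities appear on the closed fiber, and no extension of $K$ changes the residue characteristic. (In residue characteristic $2$ the Kummer construction itself degenerates; e.g.\ for supersingular reduction the Kummer quotient of the special fiber is not a K3.) So that part of your proposal is not a proof. To be fair, the paper's own proof has the same restriction: it invokes the construction of $\cY$ from $\cX$, which uses $p \neq 2$ (for Lemma~\ref{alem:Kum} and for commuting the quotient with the special fiber) and $p \neq 2,3$ (for the fibration arguments), so the corollary as proved there should also be read at such places. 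Your proposal matches the paper in coverage for $p \neq 2,3$; just do not claim the remaining residue characteristics on the strength of the sketched workaround.
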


\medskip

The structure of this paper is as follows.
In section~\ref{sec:prelim} 
we give some preliminary results.
We prove Theorem~\ref{thm:critetSI} in section~\ref{sec:lproof}
and Theorems \ref{thm:critcrysKum}, \ref{thm:critcrysSI} in section~\ref{sec:pproof}.
As an appendix, 
we give a proof of Ito's unpublished result (Theorem~\ref{thm:critetKum})
in section~\ref{sec:Kummer}.

\section{Preliminaries} \label{sec:prelim}

\subsection{General results} \label{subsec:prelim}

In this subsection,
we prove some basic results to be used later.

\begin{lemma}
Let $K$ be a local field, $\cO_K$ its ring of integers and $k$ its residue field.
Then there is an isomorphism $\Pic \POK^1 \cong \bZ$ 
compatible with $\Pic \POK^1 \rightarrow \Pic \bP^1_* \stackrel{\deg}{\rightarrow} \bZ$ 
for ${*} = K, k$.
\end{lemma}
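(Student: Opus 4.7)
The plan is to identify $\Pic \POK^1$ with $\bZ$ via the class of $\cO(1)$. The key observation is that the only ``new'' divisor introduced by passing from the generic fiber to the integral model $\POK^1$ is the special fiber itself, and the special fiber is already principal, cut out by any uniformizer of $\cO_K$.

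First, I would note that $\cO_K$ is a regular ring (being a DVR), and hence $\POK^1$ is a two-dimensional regular Noetherian scheme (smooth over a regular base). This allows us to identify $\Pic \POK^1 = \Cl \POK^1$ and argue in terms of Weil divisors.

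Next, I would apply the standard excision sequence for divisor class groups attached to the closed immersion $i \colon \bP^1_k \hookrightarrow \POK^1$ and its open complement $j \colon \bP^1_K \hookrightarrow \POK^1$:
\[
\bZ \xrightarrow{\,1 \mapsto [\bP^1_k]\,} \Cl \POK^1 \xrightarrow{\,j^*\,} \Cl \bP^1_K \to 0.
\]
A uniformizer $\pi \in \cO_K$, viewed as a rational function on $\POK^1$, has divisor exactly $[\bP^1_k]$, so the first map is zero. Hence $j^*$ is an isomorphism, and composing with the usual degree isomorphism $\Pic \bP^1_K \xrightarrow{\sim} \bZ$ yields the claim, with the generator realized concretely by $\cO_{\POK^1}(1)$.

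For the compatibility with the special-fiber degree map, I would simply note that $i^* \cO_{\POK^1}(1) \cong \cO_{\bP^1_k}(1)$, which has degree $1$ on $\bP^1_k$, matching the already-verified compatibility on the generic fiber. I do not expect any genuine obstacle: this is a routine application of excision once the regularity of $\POK^1$ is recorded.
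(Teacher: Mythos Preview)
Your proof is correct and close in spirit to the paper's, but you take a slightly cleaner route. The paper also identifies $\Pic$ with $\Cl$ and notes that the special fiber is principal, but instead of invoking the excision sequence it classifies the prime Weil divisors of $\POK^1$ by hand (either the special fiber or $V_+(P)$ for a primitive homogeneous irreducible $P \in \cO_K[X,Y]$) and then argues directly that a combination $\sum n_i [V_+(P_i)]$ is principal iff $\sum n_i \deg P_i = 0$. Your excision argument bypasses this explicit classification and reduces immediately to the known computation of $\Cl \bP^1_K$, which is tidier; the paper's approach, on the other hand, makes the divisor structure completely explicit and avoids appealing to the sequence. Your verification of special-fiber compatibility via $i^*\cO(1) \cong \cO_{\bP^1_k}(1)$ is also a bit more direct than the paper's, which simply asserts compatibility from the description of the degree map.
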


\begin{proof}
$\Pic \POK^1$ is canonically isomorphic to 
the group $\Cl \POK^1$ of Weil divisors modulo principal divisors, 
and the same holds for $\bP^1_K$ and $\bP^1_k$.
A prime Weil divisor of $\POK^1$ is either the special fiber $\bP_k^1$ or of the form $V_+(P)$ 
where $P$ is a non-constant primitive homogeneous irreducible polynomial in $\cO_K[X,Y]$
(a polynomial in $\cO_K[X,Y]$ is primitive if its coefficients are not all divisible by the uniformizer of $\cO_K$).
The former is principal.
For the latter, $\sum n_i [V_+(P_i)]$ is principal if and only if $\sum n_i \deg P_i = 0$, 
and hence the degree map induces an isomorphism $\Cl \POK^1 \stackrel{\sim}{\rightarrow} \bZ$.
This is clearly compatible with $\Pic \POK^1 \rightarrow \Pic \bP^1_*$.
\end{proof}

\begin{lemma}
 \label{lem:pic-g}
Let $X$ be a geometrically connected variety over $F$
and assume that $X$ has at least one $F$-rational point.
Then
the natural map $\Pic X \rightarrow (\Pic X_{\overline F})^{G_F}$ is an isomorphism.
\end{lemma}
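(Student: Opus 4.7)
The plan is to apply the Leray spectral sequence for the structure morphism $f\colon X \to \Spec F$ with coefficients in the \'etale sheaf $\bG_m$, namely
\[
E_2^{p,q} = H^p(G_F, H^q_\et(X_{\overline F}, \bG_m)) \Longrightarrow H^{p+q}_\et(X, \bG_m),
\]
and to read off the conclusion from its five-term exact sequence
\[
0 \to H^1(G_F, H^0_\et(X_{\overline F}, \bG_m)) \to \Pic X \to (\Pic X_{\overline F})^{G_F} \to H^2(G_F, H^0_\et(X_{\overline F}, \bG_m)) \xrightarrow{f^*} H^2_\et(X, \bG_m),
\]
whose last arrow is the pullback along $f$.

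First, I would identify $H^0_\et(X_{\overline F}, \bG_m) = \overline F^{\times}$, using that $X$ is geometrically connected (and, in the applications of interest to this paper, proper, so that $H^0(X_{\overline F}, \cO_{X_{\overline F}}) = \overline F$). Hilbert 90 then yields $H^1(G_F, \overline F^{\times}) = 0$, which immediately gives injectivity of $\Pic X \to (\Pic X_{\overline F})^{G_F}$ and reduces the lemma to showing that the fourth arrow in the sequence above is zero.

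For surjectivity, I would invoke the hypothesis $X(F) \neq \emptyset$: an $F$-rational point $x$ yields a section $x\colon \Spec F \to X$ of $f$, and hence a retraction $x^*$ of $f^*$ on \'etale cohomology. In particular the edge map $f^*\colon H^2(G_F, \overline F^{\times}) \to H^2_\et(X, \bG_m)$ is split injective, which by exactness forces the preceding connecting map $(\Pic X_{\overline F})^{G_F} \to H^2(G_F, \overline F^{\times})$ to vanish; this is exactly the surjectivity we want.

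The only point needing care is the identification of the fifth term of the five-term sequence with the genuine pullback $f^*$ on $H^2(\bG_m)$ (so that the retraction argument applies to it). This is the standard functoriality of Leray edge maps and is the only non-formal ingredient, so I do not anticipate a real obstacle — the rational-point hypothesis enters in the cleanest possible way.
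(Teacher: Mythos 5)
Your proposal is correct and is essentially the paper's own argument: the spectral sequence you write down (which the paper calls Hochschild--Serre) gives the same five-term exact sequence, Hilbert 90 kills the $E_2^{1,0}$ term for injectivity, and the section coming from the $F$-rational point splits $H^2_\et(\Spec F,\bG_m)\to H^2_\et(X,\bG_m)$, forcing the connecting map to vanish and giving surjectivity. The one point you flag (identifying $H^0_\et(X_{\overline F},\bG_m)$ with $\overline F^{\times}$, which really uses properness in addition to geometric connectedness) is glossed over in the paper as well, so your treatment is at least as careful.
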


\begin{proof}
Recall that $\Pic X \cong H^1_\et(X, \bG_m)$ and $\Pic X_{\overline F} \cong H^1_\et(X_{\overline F}, \bG_m)$. 
We make use of the Hochschild--Serre spectral sequence 
\[
 E_2^{p.q} = H^p(G_F, H^q_\et(X_{\overline F}, \bG_m)) \Rightarrow H^{p+q}_\et(X_F, \bG_m).
\]
Since 
\[
 E^{1,0} = H^1(G_F, H^0_\et(X_{\overline F}, \bG_m)) = H^1_\et(\Spec F, \bG_m) = 0,
\]
we have an exact sequence
\[
 0 \rightarrow \Pic X \rightarrow (\Pic X_{\overline F})^{G_F} \rightarrow 
 H^2_\et(\Spec F, \bG_m) \rightarrow H^2_\et(X_F, \bG_m).
\]
By assumption that $X$ has at least one $F$-rational point, 
the morphism $X \rightarrow \Spec F$ has a section $s \colon \Spec F \rightarrow X$, 
which induces a splitting $s^* \colon H^2_\et(X, \bG_m) \rightarrow H^2_\et(\Spec F, \bG_m)$
of the last map in the above sequence. 
The map $H^2_\et(\Spec F, \bG_m) \rightarrow H^2_\et(X_F, \bG_m)$ is therefore injective. 
The conclusion follows from this.
\end{proof}

\begin{lemma}
 \label{lem:blupatsmpt}
Let $S$ be a scheme, $X$ a scheme over $S$ and $Z \subset X$ a closed subscheme of $X$. 
Assume that $X$ is smooth over $S$ and that the composite $Z \hookrightarrow X \rightarrow S$ is an isomorphism.
Then for any $S$-scheme $S'$, 
the canonical morphism $\Bl_{(Z \times_S S')} (X \times_S S') \rightarrow (\Bl_Z X) \times_S S'$ is an isomorphism.
\end{lemma}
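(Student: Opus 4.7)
The plan is to verify the isomorphism étale-locally on $X$ and then reduce to a standard local model. Blow-ups commute with flat base change on the ambient scheme, so for any étale $U \to X$ one has $\Bl_{Z_U} U \cong (\Bl_Z X) \times_X U$ (with $Z_U := Z \times_X U$), and the analogous statement holds after base-changing along $S' \to S$. It therefore suffices to prove the lemma with $X$ replaced by a suitable étale neighborhood of each point of $Z$.

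Because $Z \to S$ is an isomorphism, the section $Z \hookrightarrow X$ of the smooth morphism $X \to S$ is a regular closed immersion, with locally free conormal sheaf $\mathcal{I}_Z/\mathcal{I}_Z^2$ of rank $n := \dim(X/S)$ over $\cO_Z$. Lifting a local basis of the conormal sheaf to sections $f_1, \dots, f_n \in \mathcal{I}_Z$, Nakayama's lemma gives $\mathcal{I}_Z = (f_1, \dots, f_n)$ locally, and the Jacobian criterion makes $(f_1, \dots, f_n) \colon X \to \bA^n_S$ étale near the chosen point, with $Z$ equal to the preimage of the zero section. Applying flat base change for blow-ups once more, the claim reduces to the universal case $X = \bA^n_S$, $Z = 0_S$. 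Here $\Bl_{0_S} \bA^n_S$ is the closed subscheme $V_+(x_iT_j - x_jT_i) \subset \bA^n_S \times_S \bP^{n-1}_S$, and the defining equations have coefficients in $\bZ$; the same formula therefore describes $\Bl_{Z \times_S S'}(X \times_S S')$, yielding the asserted isomorphism.

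The main obstacle is justifying the étale-local reduction: one must verify that the canonical morphism $\Bl_{Z \times_S S'}(X \times_S S') \to (\Bl_Z X) \times_S S'$ given by the universal property of blow-up is compatible with étale base change on $X$, and that the resulting local isomorphisms glue. Both rest on flat base change for blow-ups and are standard, but they are what allow us to sidestep the fact that $S' \to S$ is not assumed flat; once we are in the explicit model, compatibility with arbitrary base change is manifest.
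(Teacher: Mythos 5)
Your argument is correct and follows essentially the same route as the paper: reduce, via flat base change for blow-ups along a local \'etale map to affine space, to an explicit universal computation of the blow-up of $\bA^n_S$ along an $S$-point, which visibly commutes with arbitrary base change. The only substantive difference is that you choose the \'etale coordinates $f_1,\dots,f_n$ to generate $\mathcal{I}_Z$, so that $Z$ is exactly the preimage of the zero section, whereas the paper uses an arbitrary \'etale factorization $X\to\bA^d_S$, takes the scheme-theoretic image $Z_0$ of $Z$, and must then compare $\Bl_Z X$ with the blow-up of the full preimage $Y\supset Z$ (an open-and-closed patching step that your adapted charts render unnecessary).
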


\begin{proof}
An easy computation shows that the lemma is true if $X = \bA^d_S$ and $Z$ is the image of an $S$-valued point of $X$.
In the general case, 
since the assertion is local, 
we may assume that $X \rightarrow S$ factors $f \colon X \rightarrow X_0 = \bA^d_S$ with $f$ \'etale.
Let $Z_0$ be the scheme-theoretic image of $Z$ under $f$.
It follows that the composite $Z_0 \hookrightarrow X_0 \rightarrow S$ is an isomorphism 
and that $Z$ is an open and closed subscheme of $Y = X \times_{X_0} Z_0$.
Using the assertion for the case $X = \bA^d_S$ and the fact that blow-up commutes with flat base change, 
we obtain, for arbitrary $S' \rightarrow S$, 
\begin{align*}
   \Bl_{Y'} X' 
   \cong  (\Bl_{Z'_0} X'_0) \times_{X'_0} X'
  &\stackrel{\sim}{\rightarrow} (\Bl_{Z_0} X_0) \times_S S' \times_{X'_0} X' 
 \\
  &
   \cong (\Bl_{Z_0} X_0) \times_{X_0} X \times_S S'
   \cong (\Bl_Y X) \times_S S'
\end{align*}
(here the symbol $'$ means the base change by $S' \rightarrow S$).
The assertion follows from this
and the fact that $\Bl_Z X$ is isomorphic to $\Bl_Y X$ outside $Y \setminus Z$
and to $X$ outside $Z$
(and the corresponding fact for $\Bl_{Z'} X'$).
\end{proof}

\begin{lemma} 
 \label{lem:2-cover}
Let $F$ be a field of characteristic $\neq 2$ and $X$ be a connected smooth proper variety over $F$. 
Let $Z$ be an effective divisor on $X$ over $F$ with no multiple component. 
Then the class $[Z]$ of $Z$ in $\Pic(X)$ is divisible by $2$ if and only if 
there is a double covering $Y \rightarrow X$ whose branch locus is $Z$. 
If $\Pic(X)$ has no $2$-torsion
 and $F$ is algebraically closed, 
then such a covering is unique up to isomorphism.
\end{lemma}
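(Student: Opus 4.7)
The plan is to prove this in both directions via the standard cyclic-cover construction, using characteristic $\neq 2$ to split off the trace, and then to handle uniqueness by keeping careful track of the data $(L,s)$ up to equivalence.

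First, for the ``$\Leftarrow$'' direction, suppose $\pi\colon Y \rightarrow X$ is a double cover branched along $Z$. Because $\charac F \neq 2$, the involution $\sigma$ of $Y$ over $X$ acts on $\pi_* \cO_Y$ and we can form the idempotents $\frac{1 \pm \sigma}{2}$, giving a canonical splitting $\pi_* \cO_Y = \cO_X \oplus M$ as $\cO_X$-modules, where $M$ is the $-1$ eigenspace. Since $\pi$ is finite flat of degree $2$, $\pi_*\cO_Y$ is locally free of rank $2$, so $M$ is a line bundle. The algebra structure restricts to a map $M \otimes M \rightarrow \cO_X$, i.e.\ a section $s \in H^0(X, M^{\otimes -2})$, and a local calculation shows that $s$ vanishes exactly along the branch divisor. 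Thus $\cO_X(Z)$ is isomorphic to $M^{\otimes -2}$ and $[Z] = 2[M^{-1}]$ is divisible by $2$.

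For the ``$\Rightarrow$'' direction, suppose $\cO_X(Z) \cong L^{\otimes 2}$ for some line bundle $L$. A defining section of the divisor $Z$ gives a section $s\in H^0(X, L^{\otimes 2})$ (well defined up to $F^*$) whose divisor is $Z$. Viewing $s$ as a homomorphism $L^{-1}\otimes L^{-1}\rightarrow \cO_X$, I endow $\cO_X \oplus L^{-1}$ with the structure of a commutative $\cO_X$-algebra and set $Y = \Spec_X(\cO_X \oplus L^{-1})$. Then $\pi\colon Y \rightarrow X$ is finite locally free of degree $2$, and working in local trivializations one checks that $Y$ ramifies precisely along the locus $s=0$, i.e.\ along $Z$. (Since $Z$ has no multiple component, this is indeed the branch locus in the usual sense.)

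For uniqueness, assume $F$ is algebraically closed and $\Pic(X)$ has no $2$-torsion. The constructions above make a double cover equivalent to a pair $(L,s)$ with $s\colon L^{\otimes 2}\xrightarrow{\sim}\cO_X(Z)$, modulo the equivalence $(L,s)\sim(L',s')$ if there is an isomorphism $\phi\colon L\rightarrow L'$ with $s'\circ\phi^{\otimes 2}=s$. Any two choices of $L$ satisfy $L^{\otimes 2}\cong L'^{\otimes 2}$, hence $L\otimes L'^{-1}$ is $2$-torsion in $\Pic(X)$, so $L\cong L'$ by hypothesis. Fixing such an isomorphism, the two sections $s,s'$ have the same divisor $Z$, so $s'=c\cdot s$ for some $c\in H^0(X,\cO_X^*)=F^*$ (using $X$ proper and connected over the algebraically closed $F$). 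Since $F$ is algebraically closed of characteristic $\neq 2$, we may choose $\sqrt{c}\in F^*$ and replace $\phi$ by $\sqrt{c}\cdot\phi$, matching the two sections and producing the required isomorphism of double covers.

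The main obstacle I expect is the uniqueness step, where one must carefully identify the moduli of double covers with the moduli of pairs $(L,s)$ up to the correct equivalence, and then see that the two obstructions to uniqueness (the ambiguity in $L$ and the rescaling of $s$) are precisely killed by the hypotheses ``no $2$-torsion in $\Pic(X)$'' and ``$F$ algebraically closed''. The construction of the cover itself is entirely local and routine once the sign conventions are fixed.
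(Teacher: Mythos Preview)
Your proof is correct and follows essentially the same approach as the paper's: both identify double covers branched along $Z$ with square roots of $\cO_X(Z)$ in $\Pic(X)$ together with a choice of section up to squares in $F^*$, and both kill the two obstructions to uniqueness using the no-$2$-torsion hypothesis and the existence of square roots in $F$. The only difference is packaging---you use the invariant language of eigenspace decompositions and $\Spec_X(\cO_X \oplus L^{-1})$, while the paper carries out the identical construction via explicit \v{C}ech data and local equations $T_i^2 = c_i f_i$.
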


\begin{proof}
Assume $D$ is a divisor such that $2[D] = [Z]$ in $\Pic(X)$.
Take representations $D = \{ (U_i, g_i) \}_i$ and $Z = \{ (U_i, f_i) \}_i$ 
with respect to a covering $X = \bigcup_i U_i$ by affine schemes.
By assumption there exist $c_i \in \cO_X(U_i)^*$ such that $g_i^2/g_j^2 = (f_i/f_j) \cdot (c_i/c_j)$.
Put $V_i = \Spec \cO_X(U_i) [T_i] / (T_i^2 - c_if_i)$. 
Then the natural morphism $V_i \rightarrow U_i$ is a double covering which branches at $Z \rvert_{U_i}$.
The morphisms $V_i \rightarrow U_i$ glue via $T_i = (g_i/g_j) T_j$
and we obtain a double covering $Y = \bigcup_i V_i \rightarrow X$ which branches at $Z$.

Conversely, 
assume $Y \rightarrow X$ is such a covering.
Then there exists a covering $X = \bigcup_i U_i$ by affine schemes 
and a representation $Z = \{ (U_i, f_i) \}_i$ 
such that locally $Y \rightarrow X$ is of the form
$\Spec \cO_X(U_i) [T_i] / (T_i^2 - c_i f_i) \rightarrow \Spec \cO_X(U_i)$
with $c_i \in \cO_X(U_i)^*$. 
Put $r_{ij} = T_i/T_j \in \cO_X(U_i \cap U_j)^*$. 
Then  any divisor $D = \{ (U_i, g_i) \}_i$ such that $g_i/g_j = r_{ij}$
 satisfies $2[D] = [Z]$.

For the second assertion, 
take two such coverings $Y \rightarrow X$ and $Y' \rightarrow X$. 
We define $c_i, c'_i \in \cO_X(U_i)^*$ and $r_{ij}, r'_{ij} = T'_i / T'_j \in \cO_X(U_i \cap U_j)^*$
as in the previous paragraph.
Since $\Pic(X) $ has no $2$-torsion, we have $r_{ij} / r'_{ij} = d_i / d_j$ for some $d_i \in \cO_X(U_i)^*$. 
Substituting this to the relations, 
we have $c_i / c'_i d_i^2 = c_j / c'_j d_j^2$ 
and these define an element of $\cO_X(X)^* = F^*$.
Since $F$ is algebraically closed (in particular closed under taking square roots), 
we can modify $d_i$ so that this element is $1$.
Then $Y$ and $Y'$ are isomorphic under the map $T_i \mapsto d_i T'_i$.
\end{proof}

\begin{lemma}
 \label{lem:tower}
{\rm (1)} 
 Let $K$ be a field (of positive characteristic).
 Let $L$ be a finite separable extension of degree $d$ of a finite purely inseparable extension of $K$.
 Then $L$ is finite purely inseparable over a finite separable extension of degree $d$ of $K$.

{\rm (2)} 
 The above statement remains true 
 if we replace ``field'' by ``local field'' and ``degree'' by ``ramification index''.
\end{lemma}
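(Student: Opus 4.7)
The natural candidate for $K'$ is the separable closure of $K$ inside $L$, i.e.~the unique maximal subextension of $L/K$ separable over $K$. By construction $L/K'$ is purely inseparable, so what remains in each part is to compute the relevant invariant of $K'/K$.

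For (1), I would invoke multiplicativity of the separable degree. Denoting by $M$ the given finite purely inseparable extension of $K$ (so that $L/M$ is separable of degree $d$), one has
\[
 [K':K] = [L:K]_s = [L:M]_s \cdot [M:K]_s = d \cdot 1 = d,
\]
as desired.

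For (2), the key observation is that, under the paper's standing convention that local fields have perfect residue field, every purely inseparable extension of such a local field has trivial residue field extension (being purely inseparable over a perfect field) and is therefore totally ramified, with ramification index equal to its degree. Applied to $M/K$ and to $L/K'$, and writing $p^r := [M:K]$, this yields $e(M/K)=p^r$ and $e(L/K')=[L:K']$. Part (1) (applied to the tower $K\subset K'\subset L$, or rather its underlying separable-degree calculation) gives $[K':K]=[L:M]$, whence
\[
 [L:K'] = [L:K]/[K':K] = [M:K] = p^r,
\]
so $e(L/K')=p^r$ as well. Multiplicativity of the ramification index then gives
\[
 e(K'/K) = \frac{e(L/K)}{e(L/K')} = \frac{e(L/M)\cdot e(M/K)}{p^r} = \frac{e \cdot p^r}{p^r} = e.
\]

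The only non-formal ingredient is the remark that, for local fields with perfect residue field, purely inseparable extensions are automatically totally ramified; once that is noted, both parts reduce to routine bookkeeping with the multiplicative formulae for $[\,\cdot\,:\,\cdot\,]_s$ and $e(\,\cdot\,/\,\cdot\,)$, and there is no substantial obstacle.
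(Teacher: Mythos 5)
Your proposal is correct: taking $K'$ to be the separable closure of $K$ in $L$ and combining multiplicativity of separable degree and of ramification index with the observation that a purely inseparable extension of a local field with perfect residue field is totally ramified is exactly the standard argument, and the paper itself offers no more detail (its proof reads ``This is easy''). So your write-up simply supplies the routine details the paper leaves to the reader, with no gap.
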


\begin{proof}
This is easy.
\end{proof}

\subsection{K3 surfaces}

Recall that K3 surface is a proper smooth minimal surface $X$ with $H^1(X, \cO_X) = 0$ and $\Omega^2_X \cong \cO_X$.

\begin{lemma}
 \label{lem:k3-pic}
Let $F$ be an algebraically closed field of characteristic $\neq 2$ 
and $X$ a K3 surface over $F$.
Then the following properties hold.
\begin{enumerate}
 \item \label{lem:k3-pic-fg}
  $\Pic(X)$ is finitely generated.
 \item \label{lem:k3-pic-2-tors}
  $\Pic(X)$ is $2$-torsion-free.
\end{enumerate}
\end{lemma}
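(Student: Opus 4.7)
The plan for part (1) is to identify $\Pic(X)$ with $\NS(X)$ and invoke the theorem of the base. For a K3 surface $H^1(X, \cO_X) = 0$, and this is the tangent space to the Picard scheme at the origin; therefore $\Pic^0_X$ is zero-dimensional, and taking $F$-rational points (with $F$ algebraically closed) gives $\Pic^0(X) = 0$. Consequently $\Pic(X) \cong \NS(X)$, which is finitely generated by the Néron--Severi theorem.

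For part (2), I would argue by contradiction. Suppose $L \in \Pic(X)$ is a nontrivial $2$-torsion element, represented by a divisor $D$, so that $2[D] = 0 = [\emptyset]$. Applying Lemma~\ref{lem:2-cover} with $Z = \emptyset$ produces a double cover $\pi \colon Y \to X$ with empty branch locus. Since $\charac F \neq 2$, this $\pi$ is étale. Because $L \neq 0$, the cover is not the split disjoint union (which would correspond to $L = \cO_X$), so $Y$ is connected.

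The contradiction will come from computing $\chi(\cO_Y)$ in two ways. Since $\pi$ is finite étale of degree~$2$, $\chi(\cO_Y) = 2\chi(\cO_X) = 4$. On the other hand, étaleness gives $\omega_Y \cong \pi^* \omega_X \cong \cO_Y$, so by Serre duality on the connected smooth proper surface $Y$ one has $h^2(Y, \cO_Y) = h^0(Y, \omega_Y) = 1$; combined with $h^0(Y, \cO_Y) = 1$, this forces $\chi(\cO_Y) = 2 - h^1(Y, \cO_Y) \leq 2$, contradicting $\chi(\cO_Y) = 4$.

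There is no substantial conceptual obstacle here. The point requiring the most care is the hypothesis $\charac F \neq 2$ in part (2): it is essential so that the torsor supplied by Lemma~\ref{lem:2-cover} from a $2$-torsion class is étale rather than an infinitesimal $\mu_2$- or $\alpha_2$-cover, which would spoil the Euler-characteristic argument. Part (1), by contrast, does not use the characteristic assumption.
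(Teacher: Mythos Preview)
Your proof is correct. Part~(1) is essentially the paper's argument: both identify $\Pic(X)$ with $\NS(X)$ via the vanishing of $\Pic^0$ (coming from $H^1(X,\cO_X)=0$) and then invoke finite generation of the N\'eron--Severi group.

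For part~(2) the routes genuinely diverge. The paper embeds $\Pic(X)\otimes\bZ_2$ into $H^2_\et(X,\bZ_2)(1)$ via the Kummer sequence, and then shows the latter is torsion-free by lifting $X$ to characteristic~$0$ (using Deligne's lifting theorem for K3 surfaces) and comparing with the singular cohomology of a complex K3, which is known to be a free $\bZ$-module. Your argument instead realises a putative nontrivial $2$-torsion class as a connected \'etale double cover via Lemma~\ref{lem:2-cover} and obtains a contradiction from the Euler characteristic; this is the classical ``algebraic simple connectedness'' argument for K3 surfaces. Your approach is more elementary and entirely self-contained within the paper: it avoids lifting theorems, proper base change, and the comparison with topology, needing only Serre duality and multiplicativity of $\chi$ under \'etale covers. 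The paper's approach, while heavier, yields as a byproduct that $H^2_\et(X,\bZ_2)$ itself is torsion-free, though this stronger fact is not used elsewhere.
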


\begin{proof}
(\ref{lem:k3-pic-fg})
The Picard group $\Pic(X)$ has a scheme structure over $F$ 
and the connected component $\Pic^0(X)$ of the identity 
is an abelian variety of dimension $\leq \dim H^1(X, \cO_X)$ (\cite[\numero 236, Proposition 2.10]{FGA}).
Since $X$ is a K3 surface we have $\dim H^1(X, \cO_X) = 0$.
Then the desired property follows from the fact by Kleiman~\cite[Expos\'e XIII, Th\'eor\`eme 5.1]{SGA6}
 that the N\'eron--Severi group $\Pic (X) / \Pic^0(X)$ is finitely generated.

(\ref{lem:k3-pic-2-tors}) 
For each $n \geq 1$, by the Kummer sequence 
\[
 0 \rightarrow \mu_{l^n} \rightarrow \bG_m \stackrel{l^n}{\rightarrow} \bG_m \rightarrow 0,
\]
we have an injection $\Pic(X) / l^n \Pic(X) \hookrightarrow H^2_\et(X, \mu_{l^n})$.
The inverse limit of these maps is also injective.
Since $\Pic(X)$ is finitely generated, $\varprojlim_n \Pic(X) / l^n \Pic(X) = \Pic(X) \otimes \bZ_l$.
So it suffices to show that $\varprojlim_n H^2_\et(X, \mu_{l^n}) = H^2_\et(X, \bZ_l)(1)$ is ($2$-)torsion free for $l=2$.

If $F$ is of positive characteristic, 
we can lift $X$ to characteristic 0 (\cite[Corollaire 1.8]{Deligne:relevement}). 
Since the singular cohomology $H^2(X, \bZ)$ of complex K3 surface is torsion-free (\cite[Proposition VIII.3.3]{BHPV:surfaces}), 
the above assertion follows from the proper base change theorem and the comparison theorem.
\end{proof}

An automorphism of a K3 surface is said to be \emph{symplectic} if it fixes a non-vanishing holomorphic $2$-form. 
(Note that, since the canonical divisor of a K3 surface is trivial, such a $2$-form exists and is unique up to constant multiple.)

The next lemma is important in studying symplectic involutions. 
This is a part of the result of Nikulin~\cite[section 5]{Nikulin:auto} for characteristic $0$, 
and Dolgachev--Keum~\cite[Theorem 3.3]{Dolgachev--Keum:auto} pointed out 
that Nikulin's argument stays valid for arbitrary characteristic $\neq 2$.
\begin{lemma}
 \label{lem:nikulin-inv}
 Let $\iota$ be a symplectic involution of a K3 surface $X$
 over an algebraically closed field of characteristic $\neq 2$.
 Then $\iota$ fixes exactly eight points 
 and $X / \langle \iota \rangle$ is birational to a K3 surface.
\end{lemma}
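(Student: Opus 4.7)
The plan is to establish the two assertions in sequence by standard fixed-point theory.

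First I would show that the fixed locus of $\iota$ consists of isolated points at each of which $d\iota_p = -\mathrm{id}$ on $T_p X$. Since the characteristic is not $2$, the action can be linearized \'etale-locally at a fixed point and the fixed locus is smooth with tangent space at $p$ equal to the $(+1)$-eigenspace of $d\iota_p$. The eigenvalues of $d\iota_p$ lie in $\{\pm 1\}$, and because $\iota$ is symplectic it preserves a trivializing section of $\Omega^2_X$, forcing $\det d\iota_p = +1$; this rules out mixed eigenvalues. If both eigenvalues were $+1$, the fixed locus would contain a $2$-dimensional component through $p$, forcing $\iota = \mathrm{id}$ by irreducibility of $X$---a contradiction. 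Hence $d\iota_p = -\mathrm{id}$ and the fixed points are isolated.

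To count them I would invoke the holomorphic Lefschetz fixed point formula (the Woods Hole formula of SGA~5, applicable because $\ord \iota = 2$ is prime to $\charac k$):
\[
 \sum_i (-1)^i \Tr\bigl(\iota^* \mid H^i(X, \cO_X)\bigr)
 = \sum_{p \in \mathrm{Fix}(\iota)} \frac{1}{\det(1 - d\iota_p^{-1} \mid T_p^* X)}.
\]
For a K3 surface one has $H^0(\cO_X) = H^2(\cO_X) = k$ and $H^1(\cO_X) = 0$; the symplectic assumption makes $\iota^*$ act trivially on $H^0(\Omega^2_X) \cong H^2(\cO_X)^\vee$, hence on $H^2(\cO_X)$, so the left-hand side equals $2$. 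Each fixed point contributes $1/\det(2\,\mathrm{id}) = 1/4$ on the right, so $\#\mathrm{Fix}(\iota) = 8$.

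For the second assertion I would form the blow-up $\sigma \colon \tilde X \to X$ at the eight fixed points. By the universal property of blow-ups, $\iota$ lifts to an involution $\tilde\iota$ on $\tilde X$, and since $d\iota_p = -\mathrm{id}$ acts trivially on $\bP(T_p X)$, each exceptional $\bP^1$ is fixed pointwise, so $\mathrm{Fix}(\tilde\iota) = E_1 \sqcup \dots \sqcup E_8$ is a smooth divisor. The quotient $f \colon \tilde X \to Y := \tilde X/\tilde\iota$ is therefore smooth, and is a double cover ramified exactly along $\sum E_i$. From $K_{\tilde X} = f^* K_Y + \sum E_i$ together with $K_{\tilde X} = \sum E_i$ (as $K_X = 0$) we get $f^* K_Y = 0$, hence $K_Y = 0$. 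The eigendecomposition $f_* \cO_{\tilde X} = \cO_Y \oplus \cL^{-1}$ under $\tilde\iota$ yields $H^1(Y, \cO_Y) \hookrightarrow H^1(\tilde X, \cO_{\tilde X}) = H^1(X, \cO_X) = 0$. The images $f(E_i)$ are eight disjoint smooth rational curves, and $f^* f(E_i) = 2 E_i$ gives $f(E_i)^2 = -2$; contracting them produces $X/\langle\iota\rangle$ with eight $A_1$ singularities. Thus $Y$ is a smooth surface with $K_Y = 0$ and $h^1(\cO_Y) = 0$, i.e.\ a K3 surface, birational to $X/\langle\iota\rangle$ as asserted.

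The main obstacle is justifying the Lefschetz step in positive characteristic: one must invoke the Woods Hole formula rather than the topological Lefschetz formula, and verify that the symplectic hypothesis transports through Serre duality to pin down $\iota^*$ on $H^2(\cO_X)$. The remaining blow-up and double-cover computations are routine.
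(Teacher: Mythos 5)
The paper itself gives no proof of this lemma; it simply cites Nikulin and Dolgachev--Keum, and your argument (tame linearization at the fixed points, the Woods Hole holomorphic Lefschetz formula for the count, then the blow-up/double-quotient computation) is essentially the argument of those references, so your route is the intended one. The fixed-point count is fine: $d\iota_p=-\id$ at every fixed point, each point contributes $1/4$, and the symplectic hypothesis forces $\iota^*=\id$ on $H^2(X,\cO_X)$ via Serre duality, so the alternating trace is $2$ and there are exactly eight fixed points.

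There is, however, one genuine gap in the second half: from $K_{\tilde X}=f^*K_Y+\sum E_i$ and $K_{\tilde X}=\sum E_i$ you conclude that $f^*K_Y=0$, \emph{hence} $K_Y=0$. Pullback along a degree-$2$ cover is not injective on $\Pic$; applying the norm only gives $K_Y^{\otimes 2}\cong\cO_Y$, and a smooth surface with $h^1(\cO_Y)=0$ and $K_Y$ a nontrivial $2$-torsion class (an Enriques surface) is not excluded by what you wrote. You must invoke the symplectic hypothesis once more. For instance: since $f$ is finite of degree $2$ and the characteristic is not $2$, $H^2(Y,\cO_Y)$ is the $\tilde\iota$-invariant part of $H^2(\tilde X,\cO_{\tilde X})\cong H^2(X,\cO_X)$, on which $\iota$ acts trivially because $\iota$ is symplectic; hence $h^2(\cO_Y)=1$, so $h^0(K_Y)=1$ by Serre duality, and a torsion line bundle with a nonzero section is trivial, giving $K_Y\cong\cO_Y$. (Alternatively, descend the $2$-form to $Y\setminus\bigcup f(E_i)$ and check via the ramification orders that it extends with neither zero nor pole across the $(-2)$-curves; or apply Krull--Schmidt to $f_*f^*K_Y\cong\cO_Y\oplus\cL^{-1}$ together with $\cL^{\otimes 2}\cong\cO_Y\bigl(\sum f(E_i)\bigr)\not\cong\cO_Y$.) Finally, note that $K_Y\cong\cO_Y$ makes $Y$ automatically minimal, which the paper's definition of a K3 surface requires. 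With these repairs your proof is complete.
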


Next propositions are useful when we want K3 surfaces to have elliptic surface structures.

\begin{proposition}
 [{Pjatecki\u\i-\v Sapiro--\v Safarevi\v c~\cite[\S 3, Theorem 1]{PSS}}]
 \label{prop:PSS}
Let $F$ be a field of characteristic $\neq 2,3$.
Let $X$ be a K3 surface over $F$ and $D$ a nef effective divisor on $X$ 
satisfying $D^2 = 0$. 
Then the linear system $\lvert D_{\overline F} \rvert$ over $\overline F$ contains a divisor of the form $mC$ 
where $m>0$ and $C$ is an elliptic curve (over $\overline F$).
\end{proposition}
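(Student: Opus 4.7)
The plan is to reduce to $F$ algebraically closed and exhibit an elliptic fibration on $X$ whose general fiber $C$ satisfies $D \sim mC$. The case $D = 0$ is vacuous, so I assume $D \neq 0$. By Riemann--Roch on the K3 surface,
\[
 \chi(\cO_X(D)) = \tfrac{1}{2} D^2 + \chi(\cO_X) = 2,
\]
and since $D$ is nonzero and nef, $-D$ is not effective, so Serre duality (together with $K_X = 0$) gives $h^2(D) = h^0(-D) = 0$. Hence $h^0(D) \geq 2$, so $\dim |D| \geq 1$.

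Next I decompose $|D| = |M| + F$ into movable and fixed parts, writing $D = M + F$. Using nefness of $D$ and effectiveness of $M, F$, their intersections with $D$ are both non-negative and sum to $D^2 = 0$, so $D \cdot M = D \cdot F = 0$. The movable part $M$, having no fixed components on the smooth surface, is nef, hence $M^2 \geq 0$; combined with $M \cdot D = M^2 + M \cdot F = 0$ and $M \cdot F \geq 0$ this forces $M^2 = 0$. The rational map defined by $|M|$ therefore has one-dimensional image. After resolving indeterminacies and taking Stein factorization, I obtain a morphism $\widetilde X \to B$ with connected fibers onto a smooth curve $B$; since $H^0(X, \Omega^1_X) = 0$, the curve $B$ must be $\bP^1$. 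A general fiber $E$ is a connected effective curve with $E^2 = 0$, and by adjunction on the K3 surface $p_a(E) = 1$.

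The characteristic hypothesis $\mathrm{char}(F) \neq 2, 3$ now enters crucially: it rules out quasi-elliptic fibrations, which exist only in characteristics $2$ and $3$, so the general fiber is in fact a smooth elliptic curve $C$. One then checks that $M \sim aC$ in $\Pic(X)$ for some $a \geq 1$ and that $F$ is supported in fibers of the fibration (any component of $F$ meeting a general fiber positively would contradict $D \cdot F = 0$); a routine analysis of how $F$ distributes among reducible fibers of the pencil produces a member of $|D|$ of the form $mC$ with $C$ the smooth general fiber. The principal difficulty I foresee is this last step --- controlling the fixed part $F$ inside possibly reducible fibers to exhibit a divisor linearly equivalent to a positive multiple of a \emph{single} smooth elliptic curve, rather than to $aC$ plus a nontrivial remainder. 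The characteristic restriction is what guarantees that the fibration produced is genuinely elliptic, and so is indispensable precisely at the step that promotes the general fiber from an arithmetic-genus-one curve to a smooth elliptic curve.
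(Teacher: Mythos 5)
The paper itself does not prove this proposition: it is quoted from Pjatecki\u\i-\v Sapiro--\v Safarevi\v c with only a citation, so there is no internal argument to compare yours with. Judged on its own, your outline follows the standard route: Riemann--Roch giving $\dim \lvert D \rvert \geq 1$, splitting off the fixed part, showing the movable part $M$ is nef with $M^2 = 0$ and defines a genus-one fibration over a rational base, and invoking $\charac F \neq 2,3$ to exclude quasi-elliptic fibrations. Those steps are essentially sound (two small points: denote the fixed part by a letter other than $F$, which is already the base field; and in positive characteristic it is cleaner to get $B \cong \bP^1$ from $H^1(X,\cO_X)=0$, i.e.\ triviality of the Albanese, since $H^0(X,\Omega^1_X)=0$ is itself a nontrivial theorem in characteristic $p$).

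The genuine gap is exactly the step you flag and then leave as ``a routine analysis'': passing from $D \sim aC + \Delta$ (with $\Delta$ the vertical fixed part) to a member of $\lvert D \rvert$ of the form $mC$. Two inputs are needed and neither appears in your sketch. First, Zariski's lemma: the intersection form on the components of a closed fiber is negative semi-definite with radical spanned by the full fiber; since $D\cdot \Delta = M\cdot \Delta = 0$ forces $\Delta^2 = 0$, each piece of $\Delta$ supported in a single fiber is a rational -- hence, by primitivity of the radical generator, an integral -- multiple of the primitive divisor class supported on that fiber. Second, and indispensably, the fact that a genus-one fibration on a K3 surface has \emph{no multiple fibers}, so that this primitive class is the whole fiber and therefore linearly equivalent to $C$. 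Without this second input you only obtain $D \sim aC + \sum_b c_b G_b$ where the $G_b$ could a priori be supports of multiple fibers, and such a divisor is in general not linearly equivalent to any $mC$; ruling this out is a genuine K3-specific fact (it follows, in every characteristic, from the canonical bundle formula for genus-one fibrations together with $K_X = 0$ and $\chi(\cO_X)=2$, the wild-fiber contributions being forced to vanish as well), not bookkeeping of reducible fibers. Once both inputs are in place, $\Delta \sim \bigl(\sum_b c_b\bigr) C$, hence $D \sim mC$ with $mC$ effective, which completes the argument.
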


\begin{proposition}
 \label{prop:makeellfib}
{\rm (1)} 
Let $X$ and $D$ be as in the previous proposition and 
$Z$ a smooth rational curve on $X$.
Assume that 
$D$ is connected and 
$Z \cdot D = 1$.
Then 
 $\lvert D \rvert$ gives an elliptic fibration 
$X \rightarrow \bP^1$ 
having $Z$ as the image of a section.

{\rm (2)}
Assume moreover that $D'$ is another effective divisor on $X$ satisfying the same conditions as $D$
and that each component of $D'$ does not meet $D$.
Then $D'$ is another fiber of the elliptic fibration given in {\rm (1)}.
\end{proposition}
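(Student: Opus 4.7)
The plan for (1) is to apply Proposition~\ref{prop:PSS} over $\overline F$ to locate a smooth elliptic curve inside $|D_{\overline F}|$, then realise $|D|$ itself as an elliptic pencil over $\bP^1_F$. Concretely, Proposition~\ref{prop:PSS} gives an effective divisor $mC \in |D_{\overline F}|$ with $m>0$ and $C$ a smooth elliptic curve; pairing with $Z$ forces $m(Z \cdot C) = Z \cdot D = 1$, so $m = 1$ and $D_{\overline F} \sim C$ is linearly equivalent to a smooth elliptic curve, which in particular makes $D$ primitive in $\Pic(X)$.

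Next I would show that $|D|$ is base-point free. Over $\overline F$, two general members of $|D_{\overline F}|$ are linearly equivalent to the irreducible curve $C$, so cannot share a component, yet meet in $D^2 = 0$ points, hence are disjoint, ruling out base points. The resulting morphism $\phi \colon X \to \bP(H^0(X, \cO_X(D))^\vee)$ has one-dimensional image because $D^2 = 0$. Let $X \to B$ be its Stein factorization: $B$ is smooth projective, of genus zero by $H^1(X, \cO_X) = 0$ via Leray. Since every member of $|D|$ is pulled back from a hyperplane, the connected $F$-divisor $D$ is a single fiber of $X \to B$, providing an $F$-rational point on $B$, so $B \cong \bP^1_F$. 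The geometric generic fibre being linearly equivalent to $C$, $\phi$ is an elliptic fibration. Finally, $Z \cdot F = 1$ for a fiber $F$ shows $Z$ is not contracted, and $\phi|_Z \colon Z \to \bP^1_F$ is a degree-$1$ morphism between smooth genus-zero curves, hence an isomorphism, exhibiting $Z$ as a section.

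For (2), each component $C$ of $D'$ satisfies $C \cdot D = 0$ by the disjointness hypothesis, so since a general fiber is linearly equivalent to $D$, $C$ is contracted by $\phi$. Connectedness of $D'$ then places its support inside a single fiber $F_0$. Zariski's lemma (applied on $X_{\overline F}$) says the intersection form on the components of $F_{0,\overline F}$ is negative semi-definite with kernel spanned by the fiber class, so $D'^2 = 0$ forces $D'_{\overline F}$ to be a $\bQ_{>0}$-multiple of $F_{0,\overline F}$. Because $Z$ is a section, $Z \cdot F_0 = 1$, so the multiplicities of the components of $F_0$ are coprime; hence $D' = k F_0$ for some $k \in \bZ_{>0}$, and $1 = Z \cdot D' = k$ gives $D' = F_0$.

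The main obstacle is in part (1): ensuring that the base of the Stein factorization descends to the split $\bP^1_F$ rather than remaining a Brauer--Severi curve without $F$-points. This is handled by noting that the given $F$-rational effective divisor $D$ is itself a fiber of $\phi$, providing the required $F$-rational point on $B$.
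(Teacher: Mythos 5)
Your proposal is correct and follows essentially the same route as the paper: $m=1$ is forced by pairing $mC$ with $Z$, the pencil $\lvert D\rvert$ defines the fibration with $D$ as a fiber and $Z$ as a section of degree one, and in (2) the contracted components, Zariski's lemma, and intersecting with $Z$ identify $D'$ with the whole fiber. The only cosmetic difference is that you produce the morphism to $\bP^1$ via Stein factorization and a rational-point argument where the paper simply invokes the argument of \cite[\S 3]{PSS} for $\dim \lvert D \rvert = 2$; note that your construction silently uses $h^0(X,\cO_X(D)) \geq 2$, which should be recorded (it follows from Riemann--Roch on a K3 since $D$ is effective, nef and $D^2=0$).
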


\begin{proof}
(1)
Let $m$ and $C$ be as in Proposition~\ref{prop:PSS}. 
Then $m$ divides $mC \cdot Z = D \cdot Z = 1$ and hence $m = \pm 1$. 
By effectiveness assumption we have $m = 1$.

By the same argument as in \cite[\S 3]{PSS}, 
we have $\dim_k \lvert D \rvert = 2$
and hence a morphism $\Phi \colon X \rightarrow \bP^1$.
Then $\Phi$ is an elliptic fibration 
since by the previous proposition $\Phi$ has a geometric fiber which is an elliptic curve.
By construction $D$ is a fiber.
Since $Z \cdot D = 1$, the composite $Z \hookrightarrow X \rightarrow \bP^1$ is an isomorphism, 
hence its inverse is a section.

(2)
Since each component of $D'$ does not intersect $D$, 
it is mapped to a point by $\Phi$.
Since $D'$ is connected, every component goes to the same point $p$.
Such a divisor has self-intersection $0$ 
if and only if it is a rational multiple of the whole fiber $\Phi^{-1}(p)$ 
(this follows from an elementary computation, 
or see \cite[Proposition III.8.2]{Silverman:AAEC}).
Comparing the intersection numbers with $Z$, it follows that $D'$ coincides with $\Phi^{-1}(p)$.
\end{proof}

In the following two lemmas, 
we consider K3 surfaces over a local field $K$. 
We 
denote by $l$ a prime different from the residue characteristic of $K$.

\begin{lemma}[{\cite[Lemmas 2.1 and 2.4]{Ito:Kummer}}]
  \label{lem:-2curves} 
Let $X$ be a K3 surface over $K$.
Assume that $H^2_\et(X_{\overline K}, \bQ_l)$ is unramified.
Then the following holds.
\begin{enumerate}
 \item \label{lem:-2curves:curve}
  Let $C \subset X_{\overline K}$ be a smooth rational curve. 
  Then $C$ is defined over 
  a finite extension which is purely inseparable over an unramified extension of $K$.
 \item \label{lem:-2curves:cover}
 Assume that $X$ has a $K$-rational point.
 Let $X' \rightarrow X_{\overline K}$ be a double covering
  ramified on $\bigcup_i C_i \subset X_{\overline K}$
  where each $C_i$ is a smooth rational curve.
  Then $X' \rightarrow X_{\overline K}$ is defined over 
  a finite extension which is purely inseparable over an unramified extension of $K$.
\end{enumerate}
\end{lemma}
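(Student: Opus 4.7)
My approach for part~(1) is to show that the inertia subgroup $I_K \subset G_K$ fixes each smooth rational curve $C$, and then descend to a finite subextension of $\overline K^{I_K}$. The Galois-equivariant cycle class map $\NS(X_{\overline K}) \otimes \bQ_l \hookrightarrow H^2_\et(X_{\overline K}, \bQ_l)(1)$, together with the unramifiedness hypothesis on $H^2_\et$, shows that $I_K$ acts trivially on $\NS(X_{\overline K}) \otimes \bQ_l$. Since $\NS(X_{\overline K})$ is finitely generated (Lemma~\ref{lem:k3-pic}(1)) the continuous $G_K$-action factors through a finite quotient, and combined with the $2$-torsion-freeness of $\Pic$ (Lemma~\ref{lem:k3-pic}(2)) this forces $I_K$ to fix the class $[C]$ of a smooth rational curve. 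On a K3 surface, however, a $(-2)$-class $[C]$ satisfies $h^0(\cO(C)) = 1$ (by Riemann--Roch and the fact that any effective $D \sim C$ has $D \cdot C < 0$, hence contains $C$ as a component), so $|C| = \{C\}$. Thus $I_K$ fixes $C$ itself; as $C$ is of finite type it descends to a finite extension. The ``purely inseparable over unramified'' clause accounts for the gap between $\overline K$ and $K^{\mathrm{sep}}$ in positive characteristic, and is packaged cleanly by Lemma~\ref{lem:tower}.

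For part~(2), applying~(1) to each $C_i$ and taking a compositum yields a finite extension $L/K$ of the required form over which every $C_i$, and hence the branch divisor $Z = \sum_i C_i$, is defined. By Lemma~\ref{lem:2-cover} the cover $X' \to X_{\overline K}$ corresponds to a class $[D] \in \Pic(X_{\overline K})$ with $2[D] = [Z]$, and $2$-torsion-freeness of $\Pic(X_{\overline K})$ (Lemma~\ref{lem:k3-pic}(2)) forces $[D]$ to be unique. For any $\sigma \in \Gal(\overline K/L)$ we have $2 \sigma^*[D] = \sigma^*[Z] = [Z] = 2[D]$, so $\sigma^*[D] = [D]$ and $[D]$ is $\Gal(\overline K/L)$-invariant. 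The assumed $K$-rational point of $X$ yields an $L$-rational point of $X_L$, so Lemma~\ref{lem:pic-g} promotes $[D]$ to a class in $\Pic(X_L)$. A line bundle $\cL$ on $X_L$ representing this class, together with a section of $\cL^{\otimes 2}$ cutting out $Z$, then realises a double covering over $L$; by the uniqueness clause of Lemma~\ref{lem:2-cover}, its base change to $\overline K$ recovers $X'$.

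The main obstacle is the first step: upgrading triviality of the $I_K$-action on $\NS \otimes \bQ_l$ to a genuine fixed-curve statement, since torsion in $\NS$ might a priori allow $I_K$ to permute $(-2)$-classes in nontrivial orbits. Controlling this torsion --- via finiteness of the Galois image together with the $2$-torsion-freeness given by Lemma~\ref{lem:k3-pic} --- and then combining it with the rigidity of smooth rational curves in their linear systems is the delicate part of the argument.
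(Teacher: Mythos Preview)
Your argument for part~(2) is essentially the paper's. For part~(1), however, the paper takes a shorter route that sidesteps precisely the torsion issue you flag as ``the main obstacle.'' Rather than first arguing that $I_K$ fixes the class $[C]$ in $\Pic$ and then invoking $\lvert C\rvert=\{C\}$, the paper works directly with intersection numbers: the cycle map is $G_K$-equivariant and compatible with the intersection pairing, so for any $\sigma\in I_K$ one has $C\cdot\sigma(C)=\cl(C)\cup\sigma(\cl(C))=\cl(C)\cup\cl(C)=C^2=-2$, and since two \emph{distinct} effective curves have non-negative intersection, this forces $\sigma(C)=C$. No control on torsion in $\Pic$ is needed at all.

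Your route can be made to work, but the step ``$2$-torsion-freeness together with finiteness of the Galois image forces $I_K$ to fix $[C]$'' is not justified as written: from $I_K$ acting trivially on $\NS\otimes\bQ_l$ you only get that $[\sigma C]-[C]$ is torsion, and the absence of $2$-torsion does nothing to exclude odd-order torsion. What you actually need is that $\Pic(X_{\overline K})$ is torsion-free --- this is true for K3 surfaces (the proof of Lemma~\ref{lem:k3-pic}(\ref{lem:k3-pic-2-tors}) goes through verbatim for every prime $l\neq p$, and $p$-torsion can be handled separately in positive characteristic), but it is more than what that lemma states. The paper's intersection-number argument is both shorter and bypasses this detour entirely; once you have $\sigma(C)=C$ for all $\sigma\in I_K$, the descent to a finite extension purely inseparable over an unramified extension proceeds exactly as you describe.
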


\begin{proof}
(\ref{lem:-2curves:curve})
Recall that there exists the cycle map $\cl \colon Z^1(X_{\overline K}) \rightarrow H^2_\et(X_{\overline K}, \bQ_l)(1)$
which is compatible with the Galois action and the intersection pairing.
Take any $\sigma \in I_K = G_{K^\un}$.
By the unramifiedness assumption, $\sigma$ acts trivially on the image of $\cl$.
Therefore we have $C \cdot \sigma(C) = C \cdot C$.
By the adjunction formula, this value is equal to $-2$.
Since distinct curves cannot have negative intersection number, 
we have $\sigma(C) = C$.
Since this holds for any $\sigma \in G_{K^\un}$, 
it follows that $C$ is defined over $(K^\un)^{p^{-\infty}}$ and hence 
over an extension of desired type.

(\ref{lem:-2curves:cover})
The divisor $\bigcup C_i$ is defined over $(K^\un)^{p^{-\infty}}$ since each $C_i$ is defined over $(K^\un)^{p^{-\infty}}$ by (\ref{lem:-2curves:curve}).
By Lemma~\ref{lem:2-cover} we can take $Y \in \Pic X_{\overline K}$ such that $2Y = \bigl[ \bigcup C_i \bigr]$.
Then since $\Pic X_{\overline K}$ has no $2$-torsion (Lemma~\ref{lem:k3-pic}), $Y$ is $G_{K^\un}$-invariant. 
Since $X$ has a $K$-rational point,  $Y$ is in $\Pic X_{(K^\un)^{p^{-\infty}}}$ by Lemma~\ref{lem:pic-g}. 
This shows, by Lemma~\ref{lem:2-cover} again, 
that $X' \rightarrow X_{\overline K}$ is defined over $(K^\un)^{p^{-\infty}}$ and hence over an extension of desired type.
\end{proof}

\begin{remark}
 \label{rem:-2curves2}
By a similar argument, we have the following:
for a K3 surface $X$ over a field $F$ 
and a field $L$ containing $\overline F$, 
any smooth rational curve $C$ on $X_{\overline L}$ 
is defined over $\overline F$.
\end{remark}

\begin{lemma}
 \label{lem:prodabelsurf} 
Let $A$ be an abelian surface over $K$
such that $H^2_\et(A_{\overline K}, \bQ_l)$ is unramified.
If $A_{\overline K}$ is isomorphic to the product of two elliptic curves, 
then so is $A_{K'}$ for 
 some finite extension $K'$ which is purely inseparable over an unramified extension of $K$.
\end{lemma}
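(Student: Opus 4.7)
The plan is to show that the hypothesis on $H^2$ forces inertia to act by scalars on $H^1$; once this is in hand, every endomorphism of $A_{\overline K}$ is $I_K$-invariant, and one can descend the idempotents defining the product decomposition. To carry out the first step, set $V := H^1_\et(A_{\overline K}, \bQ_l)$, which has dimension four. The cup product gives a $G_K$-equivariant isomorphism $H^2_\et(A_{\overline K}, \bQ_l) \cong \wedge^2 V$, so by hypothesis $\wedge^2 V$ is unramified. The key linear-algebra fact is that any $M \in \operatorname{GL}(V)$ satisfying $\wedge^2 M = \id$ must equal $\pm \id_V$: indeed, $Mv \wedge Mw = v \wedge w$ for all $v, w$ forces $M$ to preserve every $2$-plane of $V$, hence (as $\dim V \geq 3$) forces $M$ to be a scalar, and then the value of $\wedge^2 M$ pins the scalar down to $\pm 1$. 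Applied to $M = \rho(\sigma)$ for $\sigma \in I_K$, this yields $\rho(I_K) \subset \{\pm \id_V\}$.

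Since $\{\pm \id_V\}$ is central in $\operatorname{GL}(V)$, the conjugation action of $I_K$ on $\operatorname{End}(V)$ is trivial. Via the $G_K$-equivariant embedding $\operatorname{End}(A_{\overline K}) \otimes \bZ_l \hookrightarrow \operatorname{End}(V)$, and using the torsion-freeness of $\operatorname{End}(A_{\overline K})$, we conclude that $I_K$ acts trivially on the ring $\operatorname{End}(A_{\overline K})$.

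In particular, the orthogonal idempotents $\pi_1, \pi_2 \in \operatorname{End}(A_{\overline K})$ arising from the two factors of $A_{\overline K} \cong E_1 \times E_2$ (satisfying $\pi_1 + \pi_2 = \id$) are $I_K$-fixed. Since morphisms between abelian varieties over $\overline K$ are already defined over $K^{\mathrm{sep}}$, Galois descent places $\pi_1, \pi_2$ in $\operatorname{End}(A_L)$ for some finite unramified extension $L/K$. Over $L$ the addition map $\pi_1(A_L) \times \pi_2(A_L) \to A_L$ is an isomorphism of abelian surfaces, with each summand $\pi_i(A_L)$ a one-dimensional abelian subvariety, i.e.\ an elliptic curve over $L$. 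This gives the desired decomposition $A_L \cong E'_1 \times E'_2$; the main obstacle is the linear-algebra reduction in the first paragraph, with the descent step being routine thereafter.
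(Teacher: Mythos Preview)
Your argument is correct, but it follows a different path from the paper's. The paper works directly with the elliptic curves $C_1, C_2 \subset A_{\overline K}$ as divisors: using the cycle class map into $H^2_\et(A_{\overline K},\bQ_l)(1)$ and the unramifiedness hypothesis, one gets $C_1 \cdot \sigma(C_1) = C_1 \cdot C_1 = 0$ for every $\sigma \in I_K$; since both $C_1$ and $\sigma(C_1)$ pass through the origin, this forces $\sigma(C_1) = C_1$, and likewise for $C_2$, so the curves descend to $(K^{\un})^{p^{-\infty}}$ and the addition map $C_1 \times C_2 \to A$ is already defined there. Your route instead passes through $H^1$: the linear-algebra lemma (which is exactly Lemma~\ref{lem:wedge2} of the paper, used there only in the Kummer and $p$-adic arguments) gives $\rho(I_K) \subset \{\pm 1\}$, whence $I_K$ acts trivially on $\operatorname{End}(A_{\overline K})$, and the projection idempotents descend. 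Your approach is a little more algebraic and, because endomorphisms of abelian varieties are automatically defined over separable extensions (the Hom scheme being \'etale), it actually yields the decomposition over a finite \emph{unramified} extension with no purely inseparable part---marginally sharper than what the paper states. The paper's approach, on the other hand, is parallel to its Lemma~\ref{lem:-2curves} for $(-2)$-curves on K3 surfaces and avoids invoking facts about endomorphism schemes.
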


\begin{proof}
Take a decomposition $A_{\overline K} = C_1 \times C_2$.
By a similar argument as in the proof of Lemma~\ref{lem:-2curves}~(\ref{lem:-2curves:curve}), 
it follows that $C_1 \cdot \sigma(C_1) = C_1 \cdot C_1 = 0$ for any $\sigma \in I_K = G_{K^\un}$.
The origin of $A$ is in $C_1$, and since it is a $K$-rational point, it is also in $\sigma(C_1)$.
It follows that $\sigma(C_1) = C_1$ since otherwise $C_1 \cdot \sigma(C_1)$ should be $\geq 1$.
This means that $C_1$ is defined over $(K^\un)^{p^{-\infty}}$ and hence over a finite subextension $K'$ of $(K^\un)^{p^{-\infty}}$. 
Similarly for $C_2$.
Then the addition map $C_1 \times C_2 \rightarrow A_{K'}$ is an isomorphism. 
\end{proof}

\medskip

In this paper we consider two specific class of K3 surfaces: 
(1) Kummer surfaces and
(2) K3 surfaces which admit Shioda--Inose structures of product type.

\begin{definition}
A K3 surface $X$ over a field $F$ of characteristic $\neq 2$ is a \emph{Kummer surface} if, 
for some abelian surface $A$ over $\overline F$, 
$X_{\overline F}$ is isomorphic to the minimal desingularization $\Km A$ 
of the quotient surface $A/\langle -1 \rangle$ of $A$ by the multiplication-by-$(-1)$ map.
\end{definition}

\begin{definition} \label{def:SIP}
We say that 
a K3 surface $Y$ over a field $F$ admits a \emph{Shioda--Inose structure of product type}
if $Y_{\overline F}$ admits an elliptic fibration $\Phi \colon X_{\overline F} \rightarrow \bP^1_{\overline F}$ 
which admits a section and two (singular) fibers of type $\rII^*$ (in Kodaira's notation).
\end{definition}
\begin{remark}
The usual notion of Shioda--Inose structure is as follows:
a K3 surface $Y$ over $\bC$ admits a \emph{Shioda--Inose structure} if 
there exists a (necessarily symplectic) involution $\iota$ of $Y$ 
such that the minimal desingularization $X$ of the quotient surface $Y / \langle \iota \rangle$
is the Kummer surface of an abelian surface $A$
and that the quotient maps induce a Hodge isometry $T_Y \cong T_A$ ($T$ denotes the transcendental lattice of a surface).

A K3 surface $Y$ over $\bC$ 
admits a Shioda--Inose structure of product type in the sense of Definition~\ref{def:SIP}
if and only if 
it admits a Shioda--Inose structure in this sense with the corresponding abelian surface $A$ being the product of two elliptic curves
(for a proof of this assertion, 
see Shioda--Inose~\cite[Theorem 3]{Shioda--Inose}).
We prefer Definition~\ref{def:SIP} since it is valid for arbitrary base field.
\end{remark}

One may ask when or how often a K3 surface admits a Shioda--Inose structure, 
and when it is of product type. 

Na\"ively thinking, 
since the K3 surfaces which admit Shioda--Inose structures (resp.~those of product type) 
are in one-to-one correspondence 
to the abelian surfaces (resp.~product abelian surfaces), 
they form a $3$-dimensional (resp.\ $2$-dimensional) moduli.

Another answer (for surfaces over $\bC$) is the following criterion in terms of transcendental lattice:
a K3 surface $X$ over $\bC$ admits a Shioda--Inose structure if and only if
there exists a primitive embedding $T_X \hookrightarrow U^3$ (Morrison~\cite[Theorem 6.3]{Morrison}), 
and it is of product type if and only if 
there exists a primitive embedding $T_X \hookrightarrow U^2$ 
(\cite[Theorem 6.3]{Morrison} combined with \cite[Corollary 3.5]{Ma:decomposition}).
Here $U$ denotes the hyperbolic plane, the lattice of rank $2$ generated by $e_1$, $e_2$ 
with $e_i \cdot e_j = 1 - \delta_{ij}$.
In particular, if $X$ admits a Shioda--Inose structure (resp.~of product type) then
its Picard number is at least $17$ (resp.~at least 18).

\subsection{Known criteria for good reduction}

We recall the relation between cohomology and reduction of varieties over local fields, 
and the criteria for good reduction of abelian varieties.
In this subsection $K$ is a local field.

For general varieties, we have the following 
necessary condition for good reduction.
\begin{theorem}
 \label{thm:critgen}
Let $X$ be a variety over $K$
which has good reduction. 
Then the following properties hold.
\begin{enumerate}
 \item 
  {\rm (consequence of the smooth base change theorem \cite[Expos\'e XVI]{SGA4-3})}
  For any prime $l \neq p$, 
  the $l$-adic \'etale cohomologies of $X$ are unramified.
 \item 
  {\rm (consequence of the crystalline conjecture 
    (Faltings~\cite[Theorems 5.3 and 5.6]{Faltings:crys} and Tsuji~\cite[Theorem 0.2]{Tsuji:Cst}))}
  The $p$-adic \'etale cohomologies of $X$ are crystalline.
\end{enumerate}
\end{theorem}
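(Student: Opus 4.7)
The plan is to exploit the hypothesis of good reduction directly: pick a proper smooth model $\cX \to \Spec \cO_K$ of $X$, and then invoke each of the two cited deep results on the model. There is essentially nothing to do beyond unpacking the statements of those results, so the structure of the proof is ``choose $\cX$, apply theorem, conclude''.

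For (1), smoothness and properness of $\cX/\cO_K$ put us in the setting of the smooth and proper base change theorems of SGA 4. These give, for any prime $l$ different from the residue characteristic $p$, a canonical $G_K$-equivariant isomorphism
\[
 H^i_\et(X_{\overline K}, \bQ_l) \;\cong\; H^i_\et(\cX_{\overline k}, \bQ_l),
\]
where $\overline k$ is an algebraic closure of the residue field $k$. Since the right-hand side carries only an action of $G_k = G_K/I_K$, the inertia subgroup $I_K$ necessarily acts trivially on the left-hand side. Hence $H^i_\et(X_{\overline K}, \bQ_l)$ is unramified, which is precisely the conclusion of (1).

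For (2), I would invoke the crystalline comparison theorem $C_\crys$, due to Faltings and Tsuji in the cited form. Applied to the smooth proper $\cO_K$-scheme $\cX$, it produces a canonical isomorphism
\[
 H^i_\et(X_{\overline K}, \bQ_p) \otimes_{\bQ_p} B_\crys \;\cong\; H^i_\crys(\cX_k / W(k))[1/p] \otimes_{K_0} B_\crys,
\]
compatible with the Galois action, the filtration, and the Frobenius, where $K_0 = \Frac W(k)$. By definition of a crystalline representation, the existence of such an isomorphism (with the associated filtered $\varphi$-module being exactly the crystalline cohomology of $\cX_k$) says that $H^i_\et(X_{\overline K}, \bQ_p)$ is crystalline.

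Since the hard work is entirely contained in the two quoted theorems, there is no genuine obstacle; the only ``step'' to record is that a choice of proper smooth model $\cX$ furnishes the input required by both theorems. In particular, one does not need any hypothesis relating $X$ to abelian or K3 surfaces at this stage; the statement is a completely general consequence of good reduction.
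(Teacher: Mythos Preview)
Your proposal is correct and matches the paper's treatment exactly: the paper does not give a proof of this theorem at all, but simply records it as a direct consequence of the smooth base change theorem and the crystalline comparison theorem, with the cited references. Your write-up just makes explicit the two-line deduction from those results, which is precisely what is intended.
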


For abelian varieties, this condition is also sufficient.

\begin{theorem}
 \label{thm:critAV}
Let $X$ be an abelian variety over $K$. 
Then $X$ having good reduction is equivalent to each of the following.
\begin{enumerate}
 \item 
  {\rm (N\'eron--Ogg--\v Safarevi\v c criterion, Serre--Tate~\cite[Theorem 1]{Serre--Tate})}
  For some (any) prime $l \neq p$, 
  the first (all) $l$-adic \'etale cohomology of $X$ is unramified.
 \item 
  {\rm (Coleman--Iovita~\cite[Theorem 4.7]{Coleman--Iovita})}
  The first (all) $p$-adic \'etale cohomology of $X$ is crystalline.
\end{enumerate}
\end{theorem}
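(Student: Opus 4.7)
The plan is to handle both equivalences in parallel by analyzing the N\'eron model $\cA$ of $A$ over $\cO_K$. The forward implications follow from Theorem~\ref{thm:critgen}, so only the converses require proof.

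By Grothendieck's semistable reduction theorem, after replacing $K$ by a suitable finite extension $K'$ the identity component $\cA_s^0$ of the special fiber fits in an exact sequence
\begin{equation*}
 0 \to T \to \cA_s^0 \to B \to 0
\end{equation*}
with $T$ a torus and $B$ an abelian variety over the residue field. Good reduction over $K'$ is equivalent to $T = 0$ together with the vanishing of the component group $\pi_0(\cA_s)$.

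For part~(1), with $l \neq p$, Grothendieck's monodromy analysis of the N\'eron model yields an inertia-stable three-step weight filtration on $V_l A$ whose graded pieces involve $V_l B$ and lattices attached to $T$, together with a canonical monodromy operator $N$ whose non-vanishing is equivalent to $T \neq 0$. Unramifiedness of $V_l A$ thus forces $N = 0$, hence $T = 0$. A parallel but more elementary analysis of the $l^n$-torsion group schemes, which are \'etale over $\cO_K$ because $l \neq p$, handles the component group and yields good reduction directly over $K$. This is essentially the original Serre--Tate argument.

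For part~(2), one invokes the $C_{\st}$ comparison theorem: over a semistable extension $K'$ the $p$-adic \'etale cohomology $H^1_\et(A_{\overline K}, \bQ_p)$ corresponds to a filtered $(\varphi, N)$-module whose monodromy operator $N$ encodes precisely the toric part $T$ via Raynaud's uniformization of $A_{K'}$. Crystallinity amounts exactly to $N = 0$, forcing $T = 0$ as in~(1). The main obstacle is the $p$-adic half: identifying Fontaine's $N$ with the geometric monodromy on the toric part, and then descending from $K'$ back to $K$, requires the detailed $p$-adic Hodge-theoretic analysis carried out by Coleman--Iovita; in contrast to the $l$-adic setting, there is no elementary torsion-scheme substitute available.
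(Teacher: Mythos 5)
There is nothing in the paper to compare against: Theorem~\ref{thm:critAV} is stated as a known background result and is simply quoted from the literature, with \cite{Serre--Tate} cited for the $l$-adic criterion and \cite{Coleman--Iovita} for the $p$-adic one; the only part the paper ever uses ``for free'' is the forward direction, which (as you note) is Theorem~\ref{thm:critgen}. Your sketch is consistent with the standard proofs, but observe that at the decisive moments it defers to exactly the same sources the paper cites --- ``this is essentially the original Serre--Tate argument'' and ``requires the detailed $p$-adic Hodge-theoretic analysis carried out by Coleman--Iovita'' --- so as a self-contained proof it is an outline rather than an argument, which is in effect the same stance the paper takes.

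Two small corrections to the outline itself. First, once one has the semi-abelian exact sequence $0 \to T \to \cA_s^0 \to B \to 0$ over $K'$, good reduction over $K'$ is equivalent to $T = 0$ alone: if $\cA^0$ is an abelian scheme it is proper, hence is itself a N\'eron model of its generic fiber, so the component group vanishes automatically; it is not an independent condition to be checked by a separate torsion-scheme argument. Second, in the $l$-adic case no descent from $K'$ back to $K$ is needed if you argue via monodromy: trivial inertia action is in particular unipotent, so by Grothendieck's criterion the semi-abelian sequence already exists over $K$, and $N = 0$ then forces $T = 0$ directly over $K$ (alternatively, Serre--Tate's counting of $l^n$-torsion in the special fiber of the N\'eron model over $\cO_K$ avoids extensions altogether). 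In the $p$-adic case the analogous descent issue (crystalline over $K$ versus semistable only over $K'$) is genuinely where the content of \cite{Coleman--Iovita} lies, as you say.
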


The next result of Ito is an analogue of the above criterion for Kummer surfaces.
Since his paper is unpublished, 
we include (under his permission) the proof of this theorem in this paper as an appendix (section~\ref{sec:Kummer}).

\begin{theorem}[{Ito~\cite[Corollary 4.3]{Ito:Kummer}\footnote{
 In Ito's paper it was assumed that $\charac K = 0$.
}}] 
 \label{thm:critetKum}
Let $K$ be a local field with residue characteristic $p \neq 2$ 
and $l$ a prime number different from $p$.
Let $X$ be a Kummer surface over $K$. 
Assume that $X$ has at least one $K$-rational point.
If $H^2_\et(X_{\overline K}, \bQ_l)$ is unramified, 
then
$X_{K'}$ has good reduction for some finite extension $K'$
which is purely inseparable over an unramified extension of $K$.
\end{theorem}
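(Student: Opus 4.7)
The plan is to identify the $16$ exceptional rational curves on $X_{\overline K}$, descend them and the associated double cover to reconstruct an abelian surface $A$ over an extension $K'/K$ of the allowed type, transfer the unramifiedness of $H^2(X)$ to $H^1(A)$, and finally build a smooth $\cO_{K'}$-model of $X$ via an integral Kummer construction.

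First, fixing an abelian surface $A_0/\overline K$ with $X_{\overline K}\cong \Km(A_0)$, I would consider the $16$ disjoint smooth rational exceptional divisors $E_1,\dots,E_{16}\subset X_{\overline K}$. Lemma~\ref{lem:-2curves}(\ref{lem:-2curves:curve}) puts them all over a common finite extension $K_1/K$ which is purely inseparable over an unramified extension. Since $\sum E_i$ is classically $2$-divisible in $\Pic X_{\overline K}$, Lemma~\ref{lem:-2curves}(\ref{lem:-2curves:cover}) (using the $K$-rational point hypothesis) descends the Kummer double cover $\tilde A=\Bl_{A_0[2]}A_0\to X$ to $K_1$. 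The $16$ preimages of the $E_i$ are disjoint $(-1)$-curves on $\tilde A$ defined over $K_1$; Castelnuovo contraction, followed by choosing one of the resulting $K_1$-rational points as origin, produces an abelian surface $A/K_1$ with $\Km(A)\cong X_{K_1}$.

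Next I would deduce good reduction of $A$. The diagram $A\leftarrow \tilde A\to X$ gives a $G_{K_1}$-equivariant summand $H^2_\et(A_{\overline K},\bQ_l)\subset H^2_\et(X_{\overline K},\bQ_l)$, hence $H^2(A_{\overline K},\bQ_l)=\wedge^2 H^1(A_{\overline K},\bQ_l)$ is unramified. Since the kernel of $\wedge^2\colon \mathrm{GL}_4\to \mathrm{GL}_6$ is $\{\pm I\}$, $I_{K_1}$ acts on $H^1(A_{\overline K},\bQ_l)$ through a quadratic character $\chi$. Over the (possibly trivial, unramified, or ramified quadratic) extension $K_2/K_1$ that kills $\chi$, Theorem~\ref{thm:critAV} yields an abelian scheme $\cA/\cO_{K_2}$ extending $A_{K_2}$. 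Since $p\neq 2$, $\cA[2]$ is finite étale, so by Lemma~\ref{lem:blupatsmpt} the blow-up $\tilde\cA:=\Bl_{\cA[2]}\cA$ is smooth and the lift of $[-1]$ has smooth fixed locus; the quotient $\cX:=\tilde\cA/\langle -1\rangle$ is a smooth proper $\cO_{K_2}$-model of $X_{K_2}$.

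The hardest point, I expect, is the case in which $K_2/K_1$ is a \emph{ramified} quadratic extension: then $K_2$ is no longer of the allowed type in the statement, so $\cX$ must be descended from $\cO_{K_2}$ to $\cO_{K_1}$. The key observation to exploit is that the nontrivial $\tau\in\Gal(K_2/K_1)$ fixes the origin of $A$ and acts on the Tate module $T_l\cA$ as $\chi(\tau)\cdot I=-I$; by the rigidity of homomorphisms between abelian varieties this forces $\tau$ to act on the special fiber $\bar\cA$ as $[-1]$. Thus on the Kummer quotient $\cX=\tilde\cA/\langle -1\rangle$ the descent action of $\tau$ becomes trivial on the special fiber, and combined with the tautological Galois descent on the generic fiber $X_{K_2}$ this yields a smooth model of $X_{K_1}$ over $\cO_{K_1}$. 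Making this rigidity-based descent rigorous—verifying that the quadratic Galois twist of $\cA$ is precisely absorbed into the Kummer involution $[-1]$—is the main technical obstacle; once it is in hand, the theorem follows with $K'=K_1$.
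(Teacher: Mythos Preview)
Your proposal tracks the paper's proof closely through the construction of $A/K_1$ with $\Km(A)\cong X_{K_1}$ via Lemma~\ref{lem:-2curves}, and through the deduction that $I_{K_1}$ acts on $H^1_\et(A_{\overline K},\bQ_l)$ by a character $\chi$ of order $\leq 2$. The genuine difference lies in handling nontrivial $\chi$. You pass to the ramified quadratic extension $K_2$, build $\cX/\cO_{K_2}$, and then propose to descend to $\cO_{K_1}$ using that $\tau\in\Gal(K_2/K_1)$ acts as $[-1]$ on the special fiber of $\cA$ and hence trivially on that of $\cX$. This can in fact be made rigorous (one checks that triviality of $\tau$ on the special fiber forces the $(-1)$-eigenspace of $\tau$ on $\cO_\cX$ to equal $\pi_{K_2}$ times the $(+1)$-eigenspace, so $\cO_\cX=\cO_\cX^\tau\otimes_{\cO_{K_1}}\cO_{K_2}$), but it is not ordinary Galois descent since $\cO_{K_1}\to\cO_{K_2}$ is not \'etale, and you correctly flag it as the remaining obstacle. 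The paper avoids the issue altogether by taking the quadratic twist $A'=(A\times_{K_1} L)/\Gal(L/K_1)$ directly over $K_1$, for $L/K_1$ a ramified quadratic extension: then $H^1_\et(A'_{\overline K},\bQ_l)$ is unramified, $A'$ has good reduction over $K_1$ by Theorem~\ref{thm:critAV}, and $\Km(A')\cong\Km(A)=X_{K_1}$, so the smooth model of $X_{K_1}$ is built straight over $\cO_{K_1}$ from the N\'eron model of $A'$ with no descent step. Your rigidity observation is essentially this twist in disguise; once recognised as such, the ``main technical obstacle'' dissolves.
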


\section{Proof of the $l$-adic result} \label{sec:lproof}

In this section we prove Theorem~\ref{thm:critetSI} and Corollary~\ref{cor:singk3}. 

For simplicity, throughout this section, we argue as if $\charac K = 0$ 
and omit the expression ``purely inseparable over'' coming from  Lemmas \ref{lem:-2curves} and \ref{lem:prodabelsurf}.
If $K$ is of positive characteristic this expression should be added;
by Lemma~\ref{lem:tower}, this does not affect the argument on the separable degree and the ramification index.

Since the statement of Theorem~\ref{thm:critetSI} 
admits 
finite unramified extensions, 
we often use the same symbol $K$ for finite unramified extensions of (the original) $K$.

We first outline the proof of Theorem~\ref{thm:critetSI} briefly. 
Let $Y$ be as in the statement of the theorem.
It is known that there exist rational maps 
$Y_{\overline K} \rightarrow X_{\overline K}$ and $X_{\overline K} \rightarrow Y_{\overline K}$ of degree $2$
for some Kummer surface $X_{\overline K}$ defined over $\overline K$.
We 
\begin{enumerate}
 \item  
  analyze the former map and
  construct a model $X$ of $X_{\overline K}$ over a finite unramified extension of $K$, 
 \item
  (using the unramifiedness of $H^2_\et(Y_{\overline K}, \bQ_l)$)
  show that $H^2_\et(X_{\overline K}, \bQ_l)$ is unramified
  after taking a finite extension of $K$ of ramification index $1$, $2$, $3$, $4$ or $6$, 
 \item
use Ito's result to obtain a good model (that is, a smooth proper model) $\cX$ of $X$ after taking a finite unramified extension of $K$, 
and 
 \item
analyze the latter map 
to construct a smooth $\cO_K$-scheme $\cY$, 
which will be a good model of $Y$.
\end{enumerate}
(The use of two different rational maps 
$Y_{\overline K} \rightarrow X_{\overline K}$ and $X_{\overline K} \rightarrow Y_{\overline K}$ seems to be essential.
See Remark~\ref{rem:2morph}.)

\medskip

Step (1). 
Let $Y$ be a K3 surface admitting a Shioda--Inose structure of product type. 
Then by definition
$Y_{\overline K}$ admits an elliptic fibration 
$\overline \Phi \colon Y_{\overline K} \rightarrow \bP^1_{\overline K}$
with a section 
and two singular fibers $D$, $D'$ of type $\rII^*$.
We will show that this fibration is defined over some finite unramified extension of $K$.

The singular fiber $D = \sum n_i C_i$ 
consists of $9$ smooth rational curves $C_1, \ldots, C_9 \subset X_{\overline K}$.
The image $Z$ of a section of $\Phi$ is also a smooth rational curve on $X_{\overline K}$. 
By Lemma~\ref{lem:-2curves}~(\ref{lem:-2curves:curve}), these curves are defined over a finite unramified extension of $K$ 
(which again we denote by $K$ for convenience).
Then by Proposition~\ref{prop:makeellfib}, 
there exists a unique elliptic fibration $\Phi \colon Y \rightarrow \bP^1$ defined over $K$ 
with a section $\bP^1 \stackrel\sim\rightarrow Z \hookrightarrow Y$ 
and singular fibers $D$ and $D'$.

\begin{claim}
Under this situation,  $Y$ is (generically) defined by an equation of the form
\[
 y^2 = x^3 + ax + (b_{-1}t^{-1} + b_0 + b_1t).
  \tag{\textasteriskcentered} 
\]
for some $a, b_{-1}, b_0, b_1 \in K$ with $b_{-1}, b_1 \neq 0$.
\end{claim}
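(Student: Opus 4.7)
The plan is to put $Y$ into Weierstrass form using the section $Z$, normalize the base coordinate using the two distinguished $\rII^*$ fibers $D$ and $D'$, and then read off the shape of the Weierstrass coefficients from Kodaira's local description of type $\rII^*$ fibers.

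First, since the elliptic fibration $\Phi \colon Y \to \bP^1_K$ has a section defined over $K$ (namely $Z$), the standard construction of the minimal Weierstrass model for the pair $(\Phi, Z)$ produces, over $K$, a Weierstrass equation
\[
 y^2 = x^3 + A(t) x + B(t)
\]
for the generic fiber, with $A$ and $B$ global sections of $\cO_{\bP^1_K}(8)$ and $\cO_{\bP^1_K}(12)$ respectively; these degrees $4 \cdot 2$ and $6 \cdot 2$ are forced by the K3 condition $\omega_Y \cong \cO_Y$ via Kodaira's canonical bundle formula for elliptic surfaces. In an affine coordinate $t$ on $\bP^1$ this means $A, B \in K[t]$ with $\deg A \leq 8$ and $\deg B \leq 12$.

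Next I locate the two $\rII^*$ fibers on the base. The components of $D$ are defined over $K$ by Lemma~\ref{lem:-2curves}~(\ref{lem:-2curves:curve}), so $D$ itself is a $K$-rational divisor; being connected and supported over a single geometric point, its image $\Phi(D)$ is a degree-one closed point of $\bP^1_K$, hence $K$-rational, and the same applies to $\Phi(D')$. Applying a $K$-rational element of $\mathrm{PGL}_2$ to the base, together with the corresponding rescaling of the Weierstrass equation (which preserves both the Weierstrass form and the degree bounds), I may assume $\Phi(D) = \{t = 0\}$ and $\Phi(D') = \{t = \infty\}$.

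Now I invoke Kodaira's local criterion for a type $\rII^*$ fiber on a minimal Weierstrass model: $v(A) \geq 4$ and $v(B) = 5$ at the corresponding place. Applying this at $t = 0$ gives $t^4 \mid A$ and $v_0(B) = 5$, while applying it at $t = \infty$, together with $\deg A \leq 8$ and $\deg B \leq 12$, forces $\deg A \leq 4$ and $\deg B = 7$. Combining, $A(t) = a t^4$ and $B(t) = b_{-1} t^5 + b_0 t^6 + b_1 t^7$ with $a, b_0 \in K$ and $b_{-1}, b_1 \in K^\times$. The birational substitution $(x, y) \mapsto (t^2 x, t^3 y)$, followed by dividing by $t^6$, then transforms this into
\[
 y^2 = x^3 + a x + (b_{-1} t^{-1} + b_0 + b_1 t),
\]
as claimed. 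The main subtlety to justify carefully is the descent: that the Weierstrass model and the base points $\Phi(D), \Phi(D')$ all live over $K$ rather than merely over $\overline K$; once this is in hand the identification of the coefficients is a direct reading of the Kodaira table.
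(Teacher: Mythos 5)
Your proof is correct and takes essentially the same route as the paper: a minimal Weierstrass form $y^2=x^3+A(t)x+B(t)$ over $K$, the bounds $\deg A\le 8$, $\deg B\le 12$, the Kodaira conditions $\ord A\ge 4$, $\ord B=5$ for $\rII^*$ applied at $t=0$ and $t=\infty$, and the substitution $(x,y)\mapsto(t^2x,t^3y)$. The only cosmetic differences are that you derive the degree bounds from the canonical bundle formula where the paper compares Euler numbers ($\chi(Y)=24$), and that you spell out explicitly the $K$-rationality of $\Phi(D)$ and $\Phi(D')$, which the paper subsumes in its ``we may assume'' normalization.
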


\begin{proof}
Since $Y$ is an elliptic surface over $\bP^1$, 
it is generically defined by a minimal Weierstrass form \[ y^2 = x^3 + A(t)x + B(t) \] 
(in $\bP^2 \times \bP^1$ with coordinates $(x,y), t$) with $A, B \in K[t]$. 
Comparing the (topological) Euler numbers of $Y$ and the singular fibers 
(Kodaira~\cite[Theorem 12.2]{Kodaira:cptanalsurf}), 
we have\footnote
 {To be precise, Kodaira proved this for complex varieties and $\chi = \chi_\topo$. 
 The whole thing remains valid for arbitrary base field and $\chi = \chi_l$
 provided that the characteristic is different from $2,3$.}
\[ \max\{ 3 \deg A, 2 \deg B \} \leq \chi(Y) = 24 \]
where $\chi(Y)$ is the Euler number of $Y$. 
Hence we have $\deg A \leq 8$ and $\deg B \leq 12$. 
We may assume that the singular fibers of type $\rII^*$ are 
above $t = 0, \infty$.
Since the fiber above $t=0$ is of type $\rII^*$, 
we have $\ord_t A \geq 4$ and $\ord_t B = 5$.
Similarly, since the fiber above $t=\infty$ is of type $\rII^*$, 
we have $\deg A \leq 8-4 = 4$ and $\deg B = 12-5 = 7$.
Consequently we obtain a (generic) equation
\[
 y^2 = x^3 + at^4x + (b_{-1}t^{5} + b_0t^6 + b_1t^7)
\]
with $b_{-1}, b_1 \neq 0$.
The true defining equation (which we omit) is obtained by performing successive blow-ups on the above formula.
By a simple change of variables, we obtain the desired equation
\[
 y^2 = x^3 + ax + (b_{-1}t^{-1} + b_0 + b_1t).
\]
(This argument is similar to the one given by Shioda~\cite[section 4]{Shioda:sandwich}. 
However, since we are working on a field not algebraically closed, 
our formula is slightly more complicated than his.)
\end{proof}

Using the coordinates of (\textasteriskcentered) above,
we define an involution $\iota \colon Y \rightarrow Y$ by $(x,y,t) \mapsto (x, -y, b'/t)$
where $b' = b_{-1}/b_1$.
Then the fixed points of $\iota$ (over $\overline K$) are exactly the $2$-torsion points\footnote{
One might notice that $\Phi^{-1}(\pm \beta)$ may not be smooth (elliptic). 
However, some calculation shows that only singular fibers of types $\rI_2$ and $\rIV$ can occur. 
In these cases the number of $2$-torsion points is indeed four. 
} of $\Phi^{-1}(\pm \beta)$ (there are four for each)
where $\beta \in \overline K$ is a square root of $b'$.
The quotient $Y/\langle\iota\rangle$ has $8$ double points (over $\overline K$) and
its minimal desingularization $X$ is a K3 surface (Lemma~\ref{lem:nikulin-inv}); 
in fact, it is the Kummer surface
which appears in the definition of Shioda--Inose structure, 
and the corresponding abelian surface is 
(after taking base change to the algebraic closure) the product of two elliptic curves (Shioda~\cite[Theorem 1.1]{Shioda:sandwich}).

\medskip

Now we proceed to Step (2).
The \'etale cohomology of $X$ is given by
\[
 H^2_\et(X_{\overline K}, \bQ_l) \cong H^2_\et(Y_{\overline K}, \bQ_l)^{\langle \iota \rangle} 
  \oplus \bigoplus_{i=1}^8 \bQ_l(-1) [E_i].
\]
Here the last term is the Tate twist of the permutation representation 
corresponding to the eight $\iota$-fixed points (or the eight exceptional curves of $X \rightarrow Y / \langle \iota \rangle$).
Let $H \subset G_K$ be the kernel of this permutation action
and $K_H/K$ the corresponding (finite) extension.
Then the inertia subgroup of $G_{K_H}$ acts on $H^2_\et(X_{\overline K}, \bQ_l)$ trivially.
(In the $\charac K = 0$ case, this is the only place we need a (possibly) ramified extension.) 
In order to estimate the ramification index $f$ of $K_H/K$, we use the next lemma.
\begin{lemma}
Let $E$ be an elliptic curve defined over a local field $K$ of residue characteristic $\neq 2,3$. 
Then $K(E[2])/K$ is (at worst) tamely ramified and of ramification index at most $3$.

The same holds if $E$ is a singular fiber of type $\rI_2$ or $\rIV$.
\end{lemma}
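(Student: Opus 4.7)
The plan is to realize $K(E[2])/K$ as the splitting field of a cubic polynomial over $K$, and then combine two facts: a tame inertia group is cyclic, and $S_3$ has no cyclic subgroup of order $6$.

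Since $p \neq 2, 3$, I would put $E$ in short Weierstrass form $y^2 = f(x)$ with $f(x) = x^3 + ax + b \in K[x]$. The nontrivial $2$-torsion points are $(\alpha, 0)$ for $\alpha$ a root of $f$, so $K(E[2])$ coincides with the splitting field of $f$ over $K$, and $G := \Gal(K(E[2])/K)$ embeds into $S_3$ via its permutation action on the roots. In particular $|G|$ divides $6$. Because $p$ is coprime to $|G|$, the extension $K(E[2])/K$ is automatically tamely ramified, so the inertia subgroup $I \subset G$ is cyclic (tame inertia injects into $\varprojlim_{(n,p)=1} \mu_n(\overline k)$). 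The cyclic subgroups of $S_3$ have orders $1$, $2$, $3$---none has order $6$---so $|I| \leq 3$. Since $|I|$ is by definition the ramification index of the Galois extension $K(E[2])/K$, the first assertion follows.

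For the two singular-fiber cases, the Weierstrass equation still makes sense, and the ``$2$-torsion'' locus $\{y = 0\}$---which is what is actually used in the application, being the fixed locus of $(x,y) \mapsto (x,-y)$---is still cut out over $K$ by a cubic $f \in K[x]$ (now possibly with repeated roots in the $\rI_2$ or $\rIV$ case). Its splitting field still has Galois group inside $S_3$, and the cyclicity-versus-non-cyclicity argument applies verbatim. The only mild point to watch is what ``$E[2]$'' should mean when $E$ is singular, but the natural interpretation is forced by context; otherwise the argument is entirely formal once the Weierstrass form is in hand, and I do not expect any genuine obstacle.
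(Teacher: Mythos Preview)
Your proof is correct and follows essentially the same route as the paper's: embed $\Gal(K(E[2])/K)$ into $\mathfrak{S}_3$ (you via the Weierstrass cubic, the paper via the three nontrivial $2$-torsion points directly), conclude tameness since $p \nmid 6$, and then use that tame inertia is cyclic while $\mathfrak{S}_3$ has no cyclic subgroup of order exceeding $3$. The paper likewise dismisses the $\mathrm{I}_2$ and $\mathrm{IV}$ cases as ``similar'', so your slightly more explicit handling of them via the Weierstrass cubic is, if anything, a mild improvement in detail.
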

\begin{proof}
We prove only the elliptic case, the remaining cases being similar.
Since $E[2] \setminus \{0\}$ consists of $3$ points, 
the extension $K(E[2])/K$ 
has a Galois group isomorphic to a subgroup of $\fS_3$, 
and in particular has order dividing $6$.
Hence 
the ramification is (at worst) tame and therefore 
the inertia group of $K(E[2])/K$ is cyclic.
A cyclic subgroup of $\fS_3$ is of order at most $3$.
\end{proof}

An element of $G_K$ belongs to $H$ if and only if 
it fixes $\beta$ and it fixes each $2$-torsion point of both $E_+$ and $E_-$,
where $E_\pm$ are the fibers of $\Phi$ above $\pm \beta \in \bP^1$.
Let $f_\pm$ be the ramification indices of $K'(E_\pm [2])/K'$
where $K' = K(\beta)$.
By the lemma we have $f_\pm \leq 3$.

If $K' = K$, 
then $K_H$ is the compositum of $K(E_\pm [2])$ and hence 
of ramification index equal to $\lcm(f_+, f_-)$ (by tameness).

If $K' \neq K$, 
then $E_+$ and $E_-$ are conjugate under the nontrivial element of $\Gal(K'/K)$ and hence 
$K'(E_+ [2])$ and $K'(E_- [2])$ have the same ramification index $f_+ = f_-$ over $K'$.
By tameness again, $K_H = K'(E_\pm[2])$ has ramification index $f_\pm$ over $K'$.
Hence $K_H$ has ramification index $f_\pm$ or $2f_\pm$ over $K$.

In each case we have $f \in \{ 1,2,3,4,6\}$.

\medskip

Step (3).
Since $H^2_\et(X_{\overline K}, \bQ_l)$ is unramified as a representation of $G_{K_H}$,
we can use Theorem~\ref{thm:critetKum} of Ito 
to obtain a good model $\cX$ over $\cO_{K'}$
for some finite unramified extension $K'$ of $K_H$.
(Note that $X$ has a $K$-rational point, since 
the intersection point of $Z$ and $D$ is a $K$-rational point on $Y$ 
and hence the corresponding exceptional curve on $X$ is isomorphic to $\bP^1_K$.)

Furthermore, by Lemma~\ref{lem:prodabelsurf}, 
the abelian surface $\cA$ over $\cO_{K'}$ appearing in the proof of Theorem~\ref{thm:critetKum} (see the proof of Lemma~\ref{alem:Kum})
is the product of two elliptic curves over $\cO_{K'}$ (after replacing $K'$ by a finite unramified extension).
This fact is used in the next step to obtain ``rational curves'' on $\cX$.

Hereafter we write simply $K$ instead of $K'$ 
(but note that this is a (possibly ramified) extension of the original $K$).

\medskip

Now we turn to Step (4): the construction of a good model $\cY$ from $\cX$.
This is the longest part of the proof.
We first recall the construction of Shioda \cite[Theorem 1.1]{Shioda:sandwich},
which describes $Y$ (up to desingularization) as a double quotient (instead of a double cover) of $X$, 
and then extend this construction to the relative case (that is, over $\cO_K$).

Fix a numbering 
$C_1[2] = \{p_i\}_{0 \leq i \leq 3}$ and 
$C_2[2] = \{q_j\}_{0 \leq j \leq 3}$: 
since $C_1$ and $C_2$ are defined and have good reduction over $K$, these points are defined over $K$ (after some finite unramified extension).
The surface $X = \Km(C_1 \times C_2)$ has $24$ specific rational curves (defined over $K$):
$u_i$, the strict transforms of the images of $p_i \times C_2$ under the quotient map; 
$v_j$, that of $C_1 \times q_j$; 
and the exceptional curves $w_{ij}$ 
corresponding to the images of $p_i \times q_j$.
The configuration of these curves are displayed in Fig.~\ref{fig:uvw}.
We focus on three divisors
\begin{align*}
 D_0      &= v_0 + v_1 + v_2 + 2w_{30} + 2w_{31} + 2w_{32} + 3u_3 , \\
 D_\infty &= u_0 + u_1 + u_2 + 2w_{03} + 2w_{13} + 2w_{23} + 3v_3
\end{align*}
and $w_{00}$.

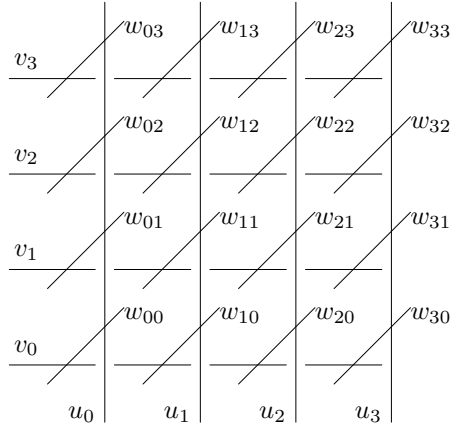
\begin{figure}
\unitlength 0.1in
\begin{picture}(25.000000,24.000000)(-1.000000,-23.500000)
\put(4.500000, -22.000000){\makebox(0,0)[rb]{$u_0$}}%
\put(9.500000, -22.000000){\makebox(0,0)[rb]{$u_1$}}%
\put(14.500000, -22.000000){\makebox(0,0)[rb]{$u_2$}}%
\put(19.500000, -22.000000){\makebox(0,0)[rb]{$u_3$}}%
\put(0.250000, -18.500000){\makebox(0,0)[lb]{$v_0$}}%
\put(0.250000, -13.500000){\makebox(0,0)[lb]{$v_1$}}%
\put(0.250000, -8.500000){\makebox(0,0)[lb]{$v_2$}}%
\put(0.250000, -3.500000){\makebox(0,0)[lb]{$v_3$}}%
\put(6.000000, -16.000000){\makebox(0,0)[lt]{$w_{00}$}}%
\put(6.000000, -11.000000){\makebox(0,0)[lt]{$w_{01}$}}%
\put(6.000000, -6.000000){\makebox(0,0)[lt]{$w_{02}$}}%
\put(6.000000, -1.000000){\makebox(0,0)[lt]{$w_{03}$}}%
\put(11.000000, -16.000000){\makebox(0,0)[lt]{$w_{10}$}}%
\put(11.000000, -11.000000){\makebox(0,0)[lt]{$w_{11}$}}%
\put(11.000000, -6.000000){\makebox(0,0)[lt]{$w_{12}$}}%
\put(11.000000, -1.000000){\makebox(0,0)[lt]{$w_{13}$}}%
\put(16.000000, -16.000000){\makebox(0,0)[lt]{$w_{20}$}}%
\put(16.000000, -11.000000){\makebox(0,0)[lt]{$w_{21}$}}%
\put(16.000000, -6.000000){\makebox(0,0)[lt]{$w_{22}$}}%
\put(16.000000, -1.000000){\makebox(0,0)[lt]{$w_{23}$}}%
\put(21.000000, -16.000000){\makebox(0,0)[lt]{$w_{30}$}}%
\put(21.000000, -11.000000){\makebox(0,0)[lt]{$w_{31}$}}%
\put(21.000000, -6.000000){\makebox(0,0)[lt]{$w_{32}$}}%
\put(21.000000, -1.000000){\makebox(0,0)[lt]{$w_{33}$}}%
\special{pa 500 2200}%
\special{pa 500 0}%
\special{fp}%
\special{pa 1000 2200}%
\special{pa 1000 0}%
\special{fp}%
\special{pa 1500 2200}%
\special{pa 1500 0}%
\special{fp}%
\special{pa 2000 2200}%
\special{pa 2000 0}%
\special{fp}%
\special{pa 0 1900}%
\special{pa 450 1900}%
\special{fp}%
\special{pa 550 1900}%
\special{pa 950 1900}%
\special{fp}%
\special{pa 1050 1900}%
\special{pa 1450 1900}%
\special{fp}%
\special{pa 1550 1900}%
\special{pa 1950 1900}%
\special{fp}%
\special{pa 0 1400}%
\special{pa 450 1400}%
\special{fp}%
\special{pa 550 1400}%
\special{pa 950 1400}%
\special{fp}%
\special{pa 1050 1400}%
\special{pa 1450 1400}%
\special{fp}%
\special{pa 1550 1400}%
\special{pa 1950 1400}%
\special{fp}%
\special{pa 0 900}%
\special{pa 450 900}%
\special{fp}%
\special{pa 550 900}%
\special{pa 950 900}%
\special{fp}%
\special{pa 1050 900}%
\special{pa 1450 900}%
\special{fp}%
\special{pa 1550 900}%
\special{pa 1950 900}%
\special{fp}%
\special{pa 0 400}%
\special{pa 450 400}%
\special{fp}%
\special{pa 550 400}%
\special{pa 950 400}%
\special{fp}%
\special{pa 1050 400}%
\special{pa 1450 400}%
\special{fp}%
\special{pa 1550 400}%
\special{pa 1950 400}%
\special{fp}%
\special{pa 200 2000}%
\special{pa 600 1600}%
\special{fp}%
\special{pa 200 1500}%
\special{pa 600 1100}%
\special{fp}%
\special{pa 200 1000}%
\special{pa 600 600}%
\special{fp}%
\special{pa 200 500}%
\special{pa 600 100}%
\special{fp}%
\special{pa 700 2000}%
\special{pa 1100 1600}%
\special{fp}%
\special{pa 700 1500}%
\special{pa 1100 1100}%
\special{fp}%
\special{pa 700 1000}%
\special{pa 1100 600}%
\special{fp}%
\special{pa 700 500}%
\special{pa 1100 100}%
\special{fp}%
\special{pa 1200 2000}%
\special{pa 1600 1600}%
\special{fp}%
\special{pa 1200 1500}%
\special{pa 1600 1100}%
\special{fp}%
\special{pa 1200 1000}%
\special{pa 1600 600}%
\special{fp}%
\special{pa 1200 500}%
\special{pa 1600 100}%
\special{fp}%
\special{pa 1700 2000}%
\special{pa 2100 1600}%
\special{fp}%
\special{pa 1700 1500}%
\special{pa 2100 1100}%
\special{fp}%
\special{pa 1700 1000}%
\special{pa 2100 600}%
\special{fp}%
\special{pa 1700 500}%
\special{pa 2100 100}%
\special{fp}%
\end{picture}%
 \caption{curves $u_i$, $v_j$ and $w_{ij}$}
 \label{fig:uvw}
\end{figure}

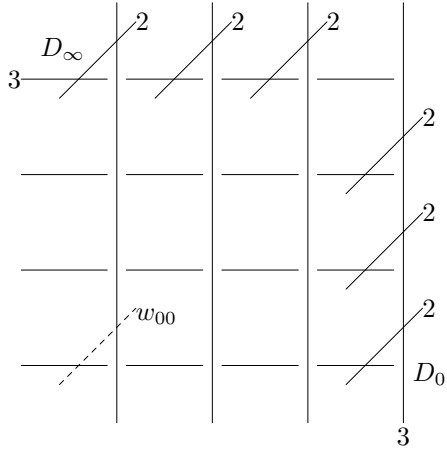
\begin{figure}
\unitlength 0.1in
\begin{picture}(25.000000,24.000000)(-1.000000,-23.500000)
\put(6.000000, -16.000000){\makebox(0,0)[lt]{$w_{00}$}}%
\put(20.500000, -20.000000){\makebox(0,0)[lb]{${D_0}$}}%
\put(1.000000, -3.000000){\makebox(0,0)[lb]{${D_\infty}$}}%
\put(20.000000, -22.250000){\makebox(0,0)[t]{$3$}}%
\put(21.000000, -16.000000){\makebox(0,0)[l]{$2$}}%
\put(21.000000, -11.000000){\makebox(0,0)[l]{$2$}}%
\put(21.000000, -6.000000){\makebox(0,0)[l]{$2$}}%
\put(0.000000, -4.000000){\makebox(0,0)[r]{$3$}}%
\put(6.000000, -1.000000){\makebox(0,0)[l]{$2$}}%
\put(11.000000, -1.000000){\makebox(0,0)[l]{$2$}}%
\put(16.000000, -1.000000){\makebox(0,0)[l]{$2$}}%
\special{pa 500 2200}%
\special{pa 500 0}%
\special{fp}%
\special{pa 1000 2200}%
\special{pa 1000 0}%
\special{fp}%
\special{pa 1500 2200}%
\special{pa 1500 0}%
\special{fp}%
\special{pa 2000 2200}%
\special{pa 2000 0}%
\special{fp}%
\special{pa 0 1900}%
\special{pa 450 1900}%
\special{fp}%
\special{pa 550 1900}%
\special{pa 950 1900}%
\special{fp}%
\special{pa 1050 1900}%
\special{pa 1450 1900}%
\special{fp}%
\special{pa 1550 1900}%
\special{pa 1950 1900}%
\special{fp}%
\special{pa 0 1400}%
\special{pa 450 1400}%
\special{fp}%
\special{pa 550 1400}%
\special{pa 950 1400}%
\special{fp}%
\special{pa 1050 1400}%
\special{pa 1450 1400}%
\special{fp}%
\special{pa 1550 1400}%
\special{pa 1950 1400}%
\special{fp}%
\special{pa 0 900}%
\special{pa 450 900}%
\special{fp}%
\special{pa 550 900}%
\special{pa 950 900}%
\special{fp}%
\special{pa 1050 900}%
\special{pa 1450 900}%
\special{fp}%
\special{pa 1550 900}%
\special{pa 1950 900}%
\special{fp}%
\special{pa 0 400}%
\special{pa 450 400}%
\special{fp}%
\special{pa 550 400}%
\special{pa 950 400}%
\special{fp}%
\special{pa 1050 400}%
\special{pa 1450 400}%
\special{fp}%
\special{pa 1550 400}%
\special{pa 1950 400}%
\special{fp}%
\special{pa 200 500}%
\special{pa 600 100}%
\special{fp}%
\special{pa 700 500}%
\special{pa 1100 100}%
\special{fp}%
\special{pa 1200 500}%
\special{pa 1600 100}%
\special{fp}%
\special{pa 1700 2000}%
\special{pa 2100 1600}%
\special{fp}%
\special{pa 1700 1500}%
\special{pa 2100 1100}%
\special{fp}%
\special{pa 1700 1000}%
\special{pa 2100 600}%
\special{fp}%
\special{pa 200 2000}%
\special{pa 600 1600}%
\special{da 0.03}%
\end{picture}%
 \caption{divisors $D_0$, $D_\infty$ and $w_{00}$}
 \label{fig:d0dinf}
\end{figure}

It is easily seen, 
from the configuration of these divisors displayed in Fig.~\ref{fig:d0dinf}, 
that $D_0$ and $D_\infty$ are disjoint divisors of type $\rIV^*$
with $w_{00} \cdot D_0 = w_{00} \cdot D_\infty = 1$.
Then by Proposition~\ref{prop:makeellfib}
there exists an elliptic fibration $\Phi_X \colon X \rightarrow \bP^1$
having $D_0$ and $D_\infty$ as singular fibers and $w_{00}$ as the image of a section.

Define involutions $\iota_1$, $\iota_2$ on $X$ as follows.
The multiplication-by-$(-1)$ map on the generic fiber $X_\eta$ 
(regarded as an elliptic curve over $\eta = \Spec K(\bP^1)$, 
the origin given by $w_{00}$)
induces an involution $\iota_1$ on $X$, 
which acts on each fiber also by $(-1)$.
The multiplication-by-$(-1,1)$ (or $(1,-1)$) map on $C_1 \times C_2$
induces an involution $\iota_2$ on $X = \Km (C_1 \times C_2)$.
Put $\iota_X = \iota_1\iota_2$.
\begin{claim}
This automorphism $\iota_X$ is a symplectic involution and 
the minimal desingularization of $X / \langle \iota_X \rangle$ is a K3 surface isomorphic to $Y$.
\end{claim}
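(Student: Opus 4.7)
The claim has two parts: first that $\iota_X$ is a symplectic involution, and second that the K3 minimal desingularization of $X/\langle\iota_X\rangle$ is isomorphic to $Y$. I would treat them in turn.

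For the first part, commutativity $\iota_1\iota_2=\iota_2\iota_1$ follows from examining the action of $\iota_2$ on the 24 distinguished curves. Since each $p_i$ and $q_j$ is $2$-torsion, the involution $(-1,1)$ on $A=C_1\times C_2$ preserves every subscheme $\{p_i\}\times C_2$, $C_1\times\{q_j\}$, and every fixed point $(p_i,q_j)$; hence $\iota_2$ preserves each $u_i$, $v_j$ and $w_{ij}$ set-theoretically, and in particular preserves $D_0$, $D_\infty$ and $w_{00}$. Thus $\iota_2$ preserves the elliptic fibration $\Phi_X$ together with its zero section, and so induces on the generic fiber an automorphism respecting the group law, which commutes with the fiberwise $[-1]$-map $\iota_1$. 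For symplecticity, $\iota_1$ is the fiberwise $[-1]$ on an elliptic K3 with section and hence acts as $-1$ on the holomorphic $2$-form $\omega_X$ (its quotient being a rational surface); and $\iota_2$, being induced from $(-1,1)$ on $A$, sends $dz_1\wedge dz_2$ to its negative and therefore also acts as $-1$ on $\omega_X$. The composite $\iota_X=\iota_1\iota_2$ thus preserves $\omega_X$ and is a symplectic involution.

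For the second part, Lemma~\ref{lem:nikulin-inv} produces a K3 minimal desingularization $Y'$ of $X/\langle\iota_X\rangle$. To identify $Y'\cong Y$ I would invoke Shioda's sandwich construction \cite[Theorem 1.1]{Shioda:sandwich}: starting from the Weierstrass equation (\textasteriskcentered) for $Y$, the substitution $t\mapsto t+b'/t$ (with $b'=b_{-1}/b_1$) together with the standard Shioda--Inose manipulations realizes $X=\Km(C_1\times C_2)$ for an appropriate pair of elliptic curves; in these explicit coordinates the fibration $\Phi_X$ and the involution $\iota_X$ can be written down, and quotienting by $\iota_X$ reverses the construction to yield $Y$. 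A more intrinsic version matches singular fibers: $\iota_X$ preserves $D_0$ and $D_\infty$, and the quotient-and-resolution process glues each $\rIV^*$ configuration together with the exceptional $(-2)$-curves from the fixed points lying on it into a $\rII^*$ configuration, recovering the two $\rII^*$ fibers of $Y$.

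The principal obstacle is this final identification $Y'\cong Y$, since a priori $Y'$ could be some other K3 carrying two $\rII^*$ fibers. Shioda's theorem was originally stated over $\bC$, but all the Weierstrass-level manipulations are algebraic and remain valid over any field of characteristic $\neq 2,3$, so they transfer without change to the local-field setting. One must also check that the construction descends from $\overline{K}$ to $K$, but this is automatic because $\Phi$, the distinguished divisors, and the coefficients $a,b_{-1},b_0,b_1$ of (\textasteriskcentered) are all $K$-rational by step~(1) of the proof.
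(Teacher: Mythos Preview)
Your approach is correct and, for the main identification $Y'\cong Y$, follows essentially the same route as the paper: both rest on the explicit Weierstrass manipulations from Shioda and Kuwata--Shioda. The paper simply carries them out: starting from $(\textasteriskcentered)$ it sets $u=t+b'/t$ and $w=(t-b'/t)^{-1}y$ to obtain $(u^2-4b')w^2=x^3+ax+(b_0+b_1u)$ as an equation for $X$, then puts $v=uw+b_1/(2w)$ to reach a Weierstrass form over the $w$-line with two $\rIV^*$ fibers; it cites \cite{Kuwata--Shioda} to identify this with the $(D_0,D_\infty)$-fibration $\Phi_X$ and to read off that $\iota_X$ acts by $(w,x,v)\mapsto(-w,x,v)$, whereupon the quotient by $w\mapsto -w$ visibly returns an equation of the form $(\textasteriskcentered)$. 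Your sketch points at exactly this computation without writing the substitutions down.

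Where you genuinely diverge is in Part~1: you give a separate conceptual argument that $\iota_X=\iota_1\iota_2$ is a symplectic involution (commutativity because $\iota_2$ preserves the fibration and the zero section, hence is a fiberwise group isomorphism; anti-symplecticity of each factor). The paper does not isolate this step at all---once the explicit action $(w,x,v)\mapsto(-w,x,v)$ is established, both the involution property and symplecticity are immediate. Your argument is correct and pleasant, but since the explicit coordinates are needed for Part~2 anyway, it buys independence only if one wanted symplecticity without the full computation. Your ``intrinsic'' alternative (each $\rIV^*$ plus exceptional curves assembling into a $\rII^*$) is, as you yourself note, not enough by itself to pin down $Y'\cong Y$ among all K3's with two $\rII^*$ fibers; the paper does not attempt that line and relies solely on the coordinate calculation.
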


\begin{proof}
We look at the defining equation (\textasteriskcentered). 
(By the uniqueness of the minimal smooth model, 
we can forget about desingularization 
and consider only generic equations.)
Letting 
\[
u = (t+b'/t) \quad \text{and} \quad
w = (t-b'/t)^{-1} y,
\]
we see that $X$ is defined by the equation
\[
 (u^2-4b') w^2 = x^3 + a x + (b_0 + b_1 u),
\]
which indicates two elliptic fibration structure: 
one over $\bP^1$ with coordinate $u$, 
with singular fibers of type
 $\{ \rII^*, \rI^*_c, \rI^*_{c'} \}$ or $\{ \rII^*, \rI^*_0, \rIV^* \}$, 
and another over $\bP^1$ with coordinate $w$.
Letting
$
 v = u w + b_1 / 2w
$,
we obtain a Weierstrass equation 
\[
 v^2 = x^3 + a x + (b_0 + 4b' w^2 + \frac{b_1^2}{4} w^{-2})
\]
relative to the latter fibration, with two singular fibers of type $\rIV^*$.
Then, by the explicit calculation of Kuwata--Shioda~\cite[sections 2.2 and 5.3]{Kuwata--Shioda}), 
we see that this fibration coincides with our $(D_0, D_\infty)$-fibration 
and that the involution $\iota_X$ acts on this equation by $(w,x,v) \mapsto (-w,x,v)$.
Then the quotient $X / \iota_X$ is birational to $Y$.
\end{proof}

We now describe the fixed points of $\iota_X$ explicitly.
Let $P_{ij}$ and $Q_{ij}$ ($i, j \in \{0,1,2,3\}$) 
be the intersection of $w_{ij}$ with $u_i$ and $v_j$ respectively.
Let $\phi$ (resp.\ $\psi$) be the involution of $u_3$ (resp.\ $v_3$)
which fixes $P_{30}$ (resp.\ $Q_{03}$)
and interchanges $P_{31}$ and $P_{32}$ (resp.\ $Q_{13}$ and $Q_{23}$) 
(such an involution is unique).
Denote by $P_{3\infty}$ (resp.\ $Q_{\infty3}$)
the fixed point of $\phi$ (resp.\ $\psi$) other than $P_{30}$ (resp.\ $Q_{03}$).
\begin{claim}
The fixed points of $\iota_X$ are $P_{00}$, $Q_{00}$, $P_{03}$, $Q_{03}$, $P_{30}$, $Q_{30}$, $P_{3\infty}$ and $Q_{\infty3}$.
\end{claim}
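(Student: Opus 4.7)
The plan is to exploit the fact, established in the preceding claim, that $\iota_X$ is a symplectic involution of the K3 surface $X$: by Lemma~\ref{lem:nikulin-inv} it has exactly eight fixed points over $\overline K$, so it suffices to show that each of the listed eight points is fixed by both $\iota_1$ and $\iota_2$ individually.

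For $\iota_2 = (-1, 1)$ on $A = C_1 \times C_2$: its fixed locus on $A$ is $C_1[2] \times C_2$, and its restriction to any horizontal curve $C_1 \times \{q_j\}$ coincides with that of $-1$. Passing to the Kummer quotient and to the minimal resolution, $\iota_2$ fixes each $u_i$ and each $v_j$ pointwise on $X$, and a short local computation on each exceptional curve $w_{ij}$ (in tangent coordinates at the node $(p_i, q_j) \in A/\langle -1 \rangle$) identifies the remaining fixed points on $w_{ij}$ as precisely $P_{ij}$ and $Q_{ij}$.

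For $\iota_1$, the fiberwise inversion of $\Phi_X$ with identity section $w_{00}$: it fixes $w_{00}$ pointwise, and since $w_{00}$ meets $v_0 \subset D_0$ and $u_0 \subset D_\infty$, these are the identity components of the two $\rIV^*$ fibers. The component group of $\rIV^*$ is $\bZ/3$, inverted by $\iota_1$, so $\iota_1$ preserves $v_0, w_{30}, u_3, u_0, w_{03}, v_3$ setwise and swaps the pairs $(v_1, v_2)$, $(w_{31}, w_{32})$, $(u_1, u_2)$, $(w_{13}, w_{23})$. On the non-central preserved components $v_0, w_{30}, u_0, w_{03}$ the two $\iota_1$-fixed points are simply the intersections with the two adjacent preserved components, giving $\{Q_{00}, Q_{30}\}$, $\{Q_{30}, P_{30}\}$, $\{P_{00}, P_{03}\}$, $\{P_{03}, Q_{03}\}$ respectively. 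On each of the central components $u_3, v_3$, $\iota_1$ fixes the intersection with the sole preserved arm and swaps the intersections with the two swapped arms, so $\iota_1|_{u_3}$ coincides with $\phi$ and $\iota_1|_{v_3}$ with $\psi$, and the remaining fixed points are $P_{3\infty}$ and $Q_{\infty 3}$ by the defining property of $\phi$ and $\psi$.

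Each of the eight listed points now lies on some $u_i$ or $v_j$ (hence is fixed by $\iota_2$) and appears in the $\iota_1$-fixed locus above, hence is fixed by $\iota_X$; Nikulin's bound then forces these to be all. The one delicate step is identifying the identity components of $D_0$ and $D_\infty$ and tracking the resulting swap of the other two arms; once these are in hand, the rest is routine enumeration.
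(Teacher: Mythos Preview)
Your proof is correct and follows essentially the same route as the paper: reduce via Lemma~\ref{lem:nikulin-inv} to checking that the eight listed points are fixed by both $\iota_1$ and $\iota_2$, use that $\iota_2$ fixes every $u_i$ and $v_j$ pointwise, and for $\iota_1$ track which components of the two $\rIV^*$ fibers are preserved (using that inversion acts nontrivially on the component group $\bZ/3$) to pin down the fixed points, in particular identifying $\iota_1|_{u_3} = \phi$ and $\iota_1|_{v_3} = \psi$. The extra local computation you mention for $\iota_2$ on the $w_{ij}$ is unnecessary, and your phrasing ``the two $\iota_1$-fixed points are \ldots'' overclaims slightly (you have not excluded that $\iota_1$ is the identity on those components), but all you actually need is that the listed intersection points are fixed, which follows immediately from both adjacent components being preserved.
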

\begin{proof}
In order to show this, 
by Lemma~\ref{lem:nikulin-inv}, 
we only have to show that $\iota_X$ indeed fixes these eight points. 
We will show that both $\iota_1$ and $\iota_2$ fix these points.

It is clear that $\iota_2$ fixes each $u_i$ and $v_j$ pointwise.
Hence $\iota_2$ in particular fixes each $P_{ij}$, $Q_{ij}$, $P_{3\infty}$ and $Q_{\infty3}$.

By construction $\iota_1$ fixes $w_{00}$ pointwise and hence $P_{00}$ and $Q_{00}$. 
Since $\iota_1$ also fixes each fiber (not pointwise),
$\iota_1$ fixes the components $u_0$, $w_{03}$, $v_3$, 
and similarly $v_0$, $w_{30}$, $u_3$ (all not pointwise).
Hence $\iota_1$ fixes $P_{03}$, $Q_{03}$, $P_{30}$ and $Q_{30}$.
As $\iota_1$ acts by $-1$ on the group scheme $(D_\infty)^\sm$
 (which is the disjoint union of three components each isomorphic to $\bG_a$), 
$\iota_1$ interchanges $u_1$ and $u_2$, hence $w_{13}$ and $w_{23}$, and hence $Q_{13}$ and $Q_{23}$.
This means that $\iota_1$ acts on $v_3$ by $\psi$. Hence it fixes $Q_{\infty3}$. 
Similarly it fixes $P_{3\infty}$.
\end{proof}

Let $\widetilde X$ be the blow-up of $X$ at these eight points
and $\tilde \iota_X$ the involution of $\widetilde X$ induced by $\iota_X$.
Then one can easily check that $Y$, 
which is isomorphic to the minimal desingularization of $X / \langle \iota_X \rangle$,
is also isomorphic to $\widetilde X / \langle \tilde \iota_X \rangle$. 

\medskip

We will now extend this construction to the relative case (over $\cO_K$). 
By the construction of $\cX$
and the fact that the abelian surface $\cA$ is the product of two elliptic curves (over $\cO_K$),
the $24$ rational curves on $X$ extends naturally to 
closed subschemes on $\cX$ which are each isomorphic to $\bP^1_{\cO_K}$.
Using these subschemes, we define divisors $\cD_0$, $\cD_\infty$ and $\cW_{00}$
similarly as $D_0$, $D_\infty$ and $w_{00}$.
Also we define the ``points'' $\cP_{ij}$, $\cQ_{ij}$, $\cP_{3\infty}$ and $\cQ_{\infty3}$ 
similarly as $P_{ij}$, $Q_{ij}$, $P_{3\infty}$ and $Q_{\infty3}$
(these are closed subschemes each isomorphic to $\Spec \cO_K$).

Hereafter, 
we denote schemes over $\cO_K$ by calligraphic letters (e.g.\ $\cC$) 
and their generic and special fibers by italic letters equipped with suffixes ${}_K$ and ${}_k$ (e.g.\ $C_K$ and $C_k$).
For sheaves on $\cO_K$-schemes or morphisms of $\cO_K$-schemes,
we denote their restrictions to the generic and special fibers by the same letter with suffixes ${}_K$ and ${}_k$
(e.g.\ $\Phi_K$ and $\Phi_k$ are the restrictions of $\Phi$).

We use the next proposition, 
which is a relative version of Proposition~\ref{prop:makeellfib}. 

\begin{proposition}
 \label{prop:makeellfibrel}
Let $\cX$ be a smooth proper scheme over $\cO_K$ 
such that $X_K$ and $X_k$ are K3 surfaces respectively over $K$ and $k$. 
Let $\cZ$ and $\cC_i$ be subschemes of $\cX$
and let $\cD = \sum n_i \cC_i$ be a (finite) linear combination.
Assume that 
\begin{itemize}
 \item each $\cC_i$ and $\cZ$ is isomorphic to $\POK^1$, 
 \item the intersection of $\cZ$ and $\cD$ is a scheme isomorphic to $\Spec \cO_K$,  
and 
 \item $\cD = \sum n_i \cC_i$ is a configuration of the type 
  $\rI_n$ ($n \geq 2$), $\rI^*_n$ ($n \geq 0$), $\rII^*$, $\rIII^{(*)}$ or $\rIV^{(*)}$ in Kodaira's notation%
  \footnote{Of course the intersection of two components should be 
  the spectrum of a ring ($\cO_K$) instead of the spectrum of a field.
  Here we exclude types $\rI_0$, $\rI_1$ and $\rII$ because their components are not $\bP^1$.}.
\end{itemize}
(It then follows that $D_K$ and $D_k$ satisfy conditions of Proposition~{\rm \ref{prop:makeellfib}}.) 

Then there exists an ``elliptic fibration'' $\Phi \colon \cX \rightarrow \bP^1_{\cO_K}$ 
having $\cD$ as a ``singular fiber'' and $\cZ$ as the image of a section, 
that is, $\Phi$ satisfies the following:
\begin{itemize}
 \item $\Phi$ is a proper surjection. 
 \item $\Phi_K \colon X_K \rightarrow \bP^1_K$
  and $\Phi_k \colon X_k \rightarrow \bP^1_k$
  are elliptic fibrations in the usual meaning.
 \item The composite $\cZ \hookrightarrow \cX \rightarrow \POK^1$ is an isomorphism.
 \item There exists an $\cO_K$-valued point $s \in \POK^1(\cO_K)$
  such that $\Phi^{-1}(s) \allowbreak = \cD$.
\end{itemize}

Moreover if $\cD'$ is as in Proposition~{\rm \ref{prop:makeellfib} (2)} then $\cD'$ is another ``singular fiber''.
\end{proposition}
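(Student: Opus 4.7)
The plan is to reduce to the absolute version (Proposition~\ref{prop:makeellfib}) applied separately to the generic and special fibers, then glue the resulting fibrations into a single morphism over $\cO_K$ via cohomology-and-base-change. First I would note that the hypotheses imply $D_K$ and $D_k$ are nef effective connected divisors of self-intersection zero, each meeting the corresponding restriction of $\cZ$ in exactly one point, so Proposition~\ref{prop:makeellfib} yields elliptic fibrations $\Phi_K \colon X_K \rightarrow \bP^1_K$ and $\Phi_k \colon X_k \rightarrow \bP^1_k$ with $D_*$ as a fiber and $Z_*$ as the image of a section.

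Next, to glue these, I would study the invertible sheaf $\cL = \cO_\cX(\cD)$ and its pushforward under the structure morphism $\pi \colon \cX \rightarrow \Spec \cO_K$. From the PSS argument used in the absolute case one obtains $h^0(X_*, \cL|_{X_*}) = 2$ and $h^i(X_*, \cL|_{X_*}) = 0$ for $i \geq 1$ on both fibers; cohomology and base change then shows that $\pi_* \cL$ is locally free of rank $2$ and that its formation commutes with base change. The canonical map $\pi^* \pi_* \cL \rightarrow \cL$ is surjective on each fiber (this is the base-point-freeness of $|D_*|$ underlying the absolute proof), hence surjective on $\cX$ by Nakayama. Choosing a basis of $\pi_* \cL$ (possible since $\cO_K$ is local) yields a morphism $\Phi \colon \cX \rightarrow \bP^1_{\cO_K}$, proper because $\cX/\cO_K$ is proper, whose restrictions to the generic and special fibers coincide (up to an automorphism of $\bP^1$) with $\Phi_K$ and $\Phi_k$.

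It then remains to upgrade the fiberwise statements about $\cZ$ and $\cD$ to the relative setting. The composite $\cZ \hookrightarrow \cX \rightarrow \bP^1_{\cO_K}$ is an isomorphism on both fibers (by the absolute version), and since both source and target are smooth proper $\cO_K$-schemes, it is an isomorphism on the nose; after adjusting the trivialization of $\pi_* \cL$ we may assume this composite is the identity. The intersection $\cZ \cap \cD$ is by hypothesis a section of $\cZ/\cO_K$, giving an $\cO_K$-point $s \in \bP^1_{\cO_K}(\cO_K)$; that $\Phi^{-1}(s) = \cD$ scheme-theoretically follows from the corresponding assertion on each fiber (Proposition~\ref{prop:makeellfib}) together with flatness of $\cD$ over $\cO_K$, which holds because each $\cC_i \cong \POK^1$ is flat and the incidence relations in a Kodaira configuration of type $\rI_n$ ($n \geq 2$), $\rI_n^*$, $\rII^*$, $\rIII^{(*)}$, or $\rIV^{(*)}$ are preserved from the generic to the special fiber. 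For the final assertion about $\cD'$, apply Proposition~\ref{prop:makeellfib}(2) to the generic and special fibers and invoke flatness of $\cD'$ once more.

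The main obstacle is the cohomology-and-base-change input: one needs to check uniformly on the two fibers that $h^0(X_*, \cO(D_*)) = 2$ and $h^1 = h^2 = 0$, and to argue that the morphism constructed globally agrees with the absolute fibrations after restriction. Once this linear-system bookkeeping is in place, the remaining verifications (properness, the section, the identification of fibers) are formal consequences of flatness and the fact that isomorphisms between proper flat $\cO_K$-schemes can be detected on fibers via Nakayama.
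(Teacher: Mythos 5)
Your proposal is correct and follows essentially the same route as the paper: construct $\Phi$ from the relative linear system of $\cD$ (showing the $\cO_K$-module of sections of $\cO_\cX(\cD)$ is free of rank $2$ and compatible with passage to the generic and special fibers) and then identify its restrictions with the fibrations produced by Proposition~\ref{prop:makeellfib}. The only difference is cosmetic bookkeeping: you argue via fiberwise dimensions and cohomology-and-base-change, while the paper computes $H^0(\cX,\cO_\cX(\cD))\cong\cO_K^{\oplus 2}$ directly from the sequence $0\to\cO_\cX\to\cO_\cX(\cD)\to\cO_\cX(\cD)\otimes\cO_\cD\to 0$ together with the semicontinuity lemma.
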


To prove this, we need a well-known lemma on cohomology of fibers. 
For a proof see \cite[Theorem 5.3.20]{Liu:AGAC}.

\begin{lemma}
Let $\cX$ be a proper $\cO_K$-scheme
and $\sF$ a coherent sheaf on $\cX$, flat over $\cO_K$.
Then 
\begin{enumerate}
 \item $\dim_k H^p(X_k, \sF_k) \geq \dim_K H^p(X_K, \sF_K)$, 
 \item the equality holds if and only if 
  $H^p(\cX, \sF)$ is a free $\cO_K$-module such that
  the canonical morphism $H^p(\cX, \sF) \otimes_{\cO_K} k \rightarrow H^p(X_k, \sF_k)$ is an isomorphism, and
 \item the morphism in {\rm (2)} is an isomorphism if and only if $H^{p+1}(\cX, \sF)$ is a free $\cO_K$-module.
\end{enumerate}
\end{lemma}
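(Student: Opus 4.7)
The plan is to reduce the entire statement to a linear-algebra statement about bounded complexes of free modules over the discrete valuation ring $\cO_K$, and then analyze that linear algebra using Smith normal form. The key input is the standard ``Grothendieck complex'' construction: since $\cX$ is proper over $\cO_K$ and $\sF$ is coherent and flat over $\cO_K$, there exists a bounded complex $L^\bullet$ of finitely generated free $\cO_K$-modules such that for every $\cO_K$-algebra $R$ there is a functorial isomorphism
\[
 H^p(\cX \otimes_{\cO_K} R,\, \sF \otimes_{\cO_K} R) \cong H^p(L^\bullet \otimes_{\cO_K} R).
\]
(One builds this by taking a finite affine open cover $\mathfrak U$ of $\cX$, forming the \v Cech complex $C^\bullet(\mathfrak U,\sF)$, which consists of flat $\cO_K$-modules, and then replacing it by a quasi-isomorphic bounded complex of finitely generated free modules, using that $\cO_K$ is a DVR so flat = free for finitely generated modules.) Applying this with $R=K$ and $R=k$ gives the two cohomology groups to compare.

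Next I would write $L^p = \cO_K^{n_p}$ and analyze the differentials. Let $d^{p-1}\colon L^{p-1} \to L^p$ and $d^p\colon L^p \to L^{p+1}$ be the relevant maps. Since $\cO_K$ is a PID, Smith normal form allows us to choose bases so that each $d^i$ is represented by a diagonal matrix with entries $\pi^{a_j}$ ($a_j \geq 0$) or $0$. For a morphism $f$ of free $\cO_K$-modules of this shape, $\mathrm{rank}_K(f \otimes K)$ counts the nonzero diagonal entries while $\mathrm{rank}_k(f \otimes k)$ counts only the entries with $a_j=0$; hence $\mathrm{rank}_K(f \otimes K) \geq \mathrm{rank}_k(f \otimes k)$ always, with equality iff no nonzero diagonal entry is divisible by $\pi$, which is equivalent to $\mathrm{coker}(f)$ being a free $\cO_K$-module. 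Combining this for $d^{p-1}$ and $d^p$ with the formulas
\[
 \dim_* H^p(L^\bullet \otimes *) = n_p - \mathrm{rank}_*(d^{p-1}\otimes *) - \mathrm{rank}_*(d^p \otimes *)
\]
for $*=K,k$ immediately yields part (1): $\dim_k H^p(X_k, \sF_k) \geq \dim_K H^p(X_K, \sF_K)$, since both ranks can only drop under reduction mod $\pi$.

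For parts (2) and (3), I would track exactly which ranks drop. Equality in (1) forces both $\mathrm{rank}(d^{p-1}\otimes k) = \mathrm{rank}(d^{p-1}\otimes K)$ and $\mathrm{rank}(d^p\otimes k) = \mathrm{rank}(d^p\otimes K)$, and by the Smith normal form analysis each of these equalities is equivalent to the corresponding cokernel being $\cO_K$-free. Free\-ness of $\mathrm{coker}(d^{p-1})$ together with freeness of $\ker(d^p)/\mathrm{im}(d^{p-1})$ is equivalent to $H^p(\cX,\sF)=H^p(L^\bullet)$ being free and commuting with the base change $-\otimes_{\cO_K} k$. Untangling this gives the ``iff'' in (2). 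For (3), the analogous observation is that the base-change map $H^p(\cX,\sF)\otimes k \to H^p(X_k,\sF_k)$ is an isomorphism iff $\mathrm{rank}(d^p\otimes k)=\mathrm{rank}(d^p\otimes K)$, which by the same argument is equivalent to $\mathrm{coker}(d^p)$ being $\cO_K$-free, i.e.\ to $H^{p+1}(\cX,\sF)$ being $\cO_K$-free.

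The one genuinely nontrivial ingredient, and hence what I would expect to be the main obstacle, is the construction of the Grothendieck complex $L^\bullet$ representing cohomology universally. Given that, everything else is a direct matrix computation exploiting the fact that $\cO_K$ is a discrete valuation ring. Since this construction is the content of the cited \cite[Theorem 5.3.20]{Liu:AGAC}, in practice I would quote it and reduce the proof to the Smith normal form analysis described above.
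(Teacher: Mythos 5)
Your proposal is correct. Note, though, that the paper does not prove this lemma at all: it simply cites \cite[Theorem 5.3.20]{Liu:AGAC}, and what you have written out is essentially the standard proof of that cited result (the Grothendieck finite free complex computing cohomology universally, followed by the semicontinuity-of-rank analysis over the DVR), so there is nothing to compare beyond that. Two small remarks: the reduction from the \v Cech complex to a bounded complex of finite free modules needs the finiteness of coherent cohomology for proper morphisms and the fact that a quasi-isomorphism of bounded complexes of flat $\cO_K$-modules stays a quasi-isomorphism after any base change, which is exactly the content you rightly flag as the nontrivial input; and in your discussion of (2) the phrase ``freeness of $\ker(d^p)/\mathrm{im}(d^{p-1})$'' is just freeness of $H^p$ itself, so the clean bookkeeping is: equality in (1) iff $\mathrm{coker}(d^{p-1})$ and $\mathrm{coker}(d^p)$ are both free, and since the torsion of $\mathrm{coker}(d^{i})$ lies inside $H^{i+1}$, this is equivalent to $H^p$ free together with the base-change map being an isomorphism, the latter being equivalent (as in your (3)) to $H^{p+1}$ free. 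With that rephrasing your argument is complete.
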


\begin{proof}
 [Proposition~{\rm \ref{prop:makeellfibrel}}]
First we show that $H^0(\cX, \cO_\cX(\cD)) \cong \cO_K^{\oplus 2}$.

We make use of the cohomology long exact sequence of the sequence
\[
 0 \rightarrow \cO_\cX \rightarrow \cO_\cX(\cD) \rightarrow \cO_X(\cD) \otimes \cO_\cD \rightarrow 0 .
\]
First note that $\cO_\cX(\cD) | _\cD \cong \cO_\cD$: 
it is true on the generic fiber since $(C_i)_K \cdot D_K = 0$ for each component $\cC_i$ of $\cD$, and
there is a canonical isomorphism $\Pic \cC_i \cong \Pic (C_i)_K$.
Connectedness of $X_K$ and $X_k$ yields $H^0(X_K, \cO_{X_K}) = K$ and $H^0(X_k, \cO_{X_k}) = k$.
Hence by the lemma we have $H^0(\cX, \cO_{\cX}) = \cO_K$, 
and again by the lemma $H^1(\cX, \cO_{\cX})$ is free over $\cO_K$.
Since cohomology commutes with taking the generic fiber (which is a flat base change), 
$H^1(\cX, \cO_\cX) \otimes_{\cO_K} K = H^1(X_K, \cO_{X_K}) = 0$ and hence $H^1(\cX, \cO_\cX) = 0$. 
Similarly, $H^0(\cD, \cO_{\cD}) = \cO_K$, 
and $H^1(\cD, \cO_{\cD})$ is free over $\cO_K$.
Combining these information, we obtain an exact sequence
\[
\begin{array}{ccccccc}
0 &\rightarrow& \cO_K &\rightarrow& H^0(\cX, \cO_\cX(\cD)) &\rightarrow& \cO_K \\
  &\rightarrow& 0 &\rightarrow& H^1(\cX, \cO_\cX(\cD)) &\rightarrow& H^1(\cD, \cO_\cD).
\end{array}
\]
It follows that $H^0(\cX, \cO_\cX(\cD)) \cong \cO_K^{\oplus 2}$.
So this ``linear system'' defines a morphism $\Phi \colon \cX \rightarrow \POK^1$.

Next we will show that this construction is compatible with that of Proposition~\ref{prop:makeellfib} (1), 
that is, $\Phi_K \colon X_K \rightarrow \bP^1_K$ and 
$\Phi_k \colon X_k \rightarrow \bP^1_k$ is the same as 
those constructed in Proposition~\ref{prop:makeellfib} (1).
This will show that $\Phi$ is the morphism wanted in the proposition.

Again by the compatibility with flat base change, we have $H^0(X_K, \cO_{X_K}(D_K)) \cong H^0(\cX, \cO_\cX(\cD)) \otimes_{\cO_K} K$. 
For the special fiber, 
$H^1(\cX, \cO_\cX(\cD))$ is a free $\cO_K$-module (since it is a submodule of a free module $H^1(\cD, \cO_\cD)$), 
so by the lemma we have $H^0(X_k, \cO_{X_k}(D_k)) \cong H^0(\cX, \cO_\cX(\cD)) \otimes_{\cO_K} k$.
These equalities show that $\Phi_K$ and $\Phi_k$ are those constructed in Proposition~\ref{prop:makeellfib} (1).

The last assertion is proved by following the proof of Proposition~\ref{prop:makeellfib} (2).
\end{proof}

We return to the proof of Theorem~\ref{thm:critetSI}.
Proposition~\ref{prop:makeellfibrel} shows that there exists an ``elliptic fibration''
$\Phi_\cX \colon \cX \rightarrow \bP^1_{\cO_K}$. 
One defines $\iota_\cX \colon \cX \rightarrow \cX$ similarly 
and observes (following the proof of the previous Claim) that the fixed points are (the union of)
$\cP_{00}$, $\cQ_{00}$, $\cP_{03}$, $\cQ_{03}$, $\cP_{30}$, $\cQ_{30}$, $\cP_{3\infty}$ and $\cQ_{\infty3}$.
Let $\cY = \widetilde \cX / \langle \tilde \iota_\cX \rangle $ where 
$\widetilde \cX$ is the blow-up of $\cX$ at the (union of) fixed points
and $\tilde \iota_\cX$ is the involution on $\widetilde \cX$ induced by $\iota_\cX$.
We shall show that this $\cY$ is a smooth proper model of $Y$.

Properness and flatness of $\cY$ over $\cO_K$ is clear from the construction.
We also know that 
$Y$ and $Y'$ are nonsingular, 
where we denote by $Y'$ the surface obtained by performing similar operations 
on the special fiber $X_k$ of $\cX$.
Hence it suffices to check that the generic fiber and special fiber of $\cY$ are 
isomorphic to $Y$ and $Y'$ respectively.
Since we have assumed that the residue characteristic is not equal to the order of $\iota_\cX$ (=$2$), 
$(\widetilde \cX / \langle \tilde \iota_\cX \rangle) \times_{\cO_K} k$ is isomorphic to 
$(\widetilde \cX \times_{\cO_K} k) / \langle \tilde \iota_k \rangle$.
Since this blow-up commutes with base change by Lemma~\ref{lem:blupatsmpt},
we have $\widetilde \cX \times_{\cO_K} k \cong (X_k)^\sim$ 
and hence $(\widetilde \cX \times_{\cO_K} k) / \langle \tilde \iota_k \rangle 
\cong (X_k)^\sim / \langle \tilde \iota_k \rangle \cong Y'$.
The generic case is easier 
(since blow-up always commutes with flat base change, we do not need Lemma~\ref{lem:blupatsmpt}).

This concludes the proof of Theorem~\ref{thm:critetSI}.

\begin{remark}
 \label{rem:2morph}
In the proof, we used two different rational maps 
$Y_{\overline K} \rightarrow X_{\overline K}$ and $X_{\overline K} \rightarrow Y_{\overline K}$.
Here we explain why we needed both.

Let us try to construct $X$ from $Y$ via the map $X_{\overline K} \rightarrow Y_{\overline K}$.
We can determine the branch locus of $X_{\overline K} \rightarrow Y_{\overline K}$ explicitly 
(it is the union of the components of odd multiplicity in two fibers of type $\rII^*$) and so
we can define $X$ and $X \rightarrow Y$ over a finite unramified extension of $K$. 
However, for the relationship of their cohomologies, we merely obtain
\[
 H^2_\et(Y_{\overline K}, \bQ_l) \cong
  H^2_\et(X_{\overline K}, \bQ_l)^{\langle \iota \rangle} \oplus \bigoplus \bQ_l(-1) [E_i]
\]
and we cannot deduce that $H^2_\et(X_{\overline K}, \bQ_l)$ is unramified, even after taking some (ramified) extension on $K$.

Next let us try to construct $\cY$ from $\cX$ via the map $Y_{\overline K} \rightarrow X_{\overline K}$.
According to the construction of Shioda--Inose~\cite[Section 2]{Shioda--Inose}, 
$X$ admits an elliptic fibration with (at least) three singular fibers of types 
$\{ \rII^*, \rI^*_c, \rI^*_{c'} \}$ or
$\{ \rII^*, \rI^*_0, \rIV^* \}$, 
and the branch locus of the morphism $Y \rightarrow X$ is the union of the components of multiplicity $1$ of 
fibers of type $\rI^*_c$ or $\rIV^*$. 
Unfortunately, the types of singular fibers might be different between the generic and special fiber of $\cX$, 
and we have trouble constructing $\cY$ as a double cover of $\cX$. 
\end{remark}

\begin{remark}
We can give an explicit (but far from best possible) bound for the \emph{separable} degree of the extension needed.

Checking the proof of the theorem and of Theorem~\ref{thm:critetKum}, 
we see that the only places we need field extensions are
(i) where we use Lemmas \ref{lem:-2curves} (twice) and \ref{lem:prodabelsurf}, 
and
(ii) where we take the kernel of the permutation action on $8$ fixed points.
The degree of extension in (ii) has trivial bound $8!$.
For each time in (i), 
it suffices to take an extension $K'$ so that 
$G_{K'}$ acts trivially on $\NS(W_{\overline K})$
where $W$ is one of $Y$, $X$, $A$.
The same arguments as in Remark~\ref{rem:degboundKum}
give explicit bounds.

Combining these, 
we have a bound $3^{484 + 484 + 36} \cdot 8! \leq 10^{484}$.

If $\charac K > 0$, this argument remains valid
by Lemma~\ref{lem:tower}.
However we do not have any bound for the \emph{inseparable} degree.
\end{remark}

We conclude this section with the proof of Corollary~\ref{cor:singk3}. 

\begin{proof}[Corollary~{\rm \ref{cor:singk3}}]
Any singular K3 surface has Shioda--Inose structure of product type
such that corresponding elliptic curves $C_1$, $C_2$ have complex multiplication
 (Shioda--Inose~\cite[Theorem 4]{Shioda--Inose}).
Any elliptic curve with complex multiplication 
is defined and has good reduction over some number field. 
Using the construction of $\cY$ from $\cX$ above, the corollary follows.
\end{proof}

\section{$p$-adic criterion} \label{sec:pproof}

We now focus on $p$-adic cohomology.
As we remarked before, in this section $K$ is of mixed characteristic $(0,p)$, 
with $k$ perfect. 
We denote by $K_0$ the unramified closure of $\bQ_p$ in $K$.

We first overview the proofs of Theorems \ref{thm:critcrysKum} and \ref{thm:critcrysSI}.
As in the $l$-adic case, 
the main idea for the Kummer case (resp.~Shioda--Inose case)
is to reduce to the abelian case (resp.~Kummer case).
However, there are some more difficulties, 
to overcome which we need our $l$-adic results.
For Theorem~\ref{thm:critcrysKum}, given a Kummer surface $X$ as in the theorem, 
we 
\begin{enumerate}
 \item[(Km1)]
  construct an abelian surface $A$ corresponding to it
  over a finite extension of $K$, 
 \item[(Km2)] 
  (using the crystallineness hypothesis) 
  show that $H^2_\et(A_{\overline K}, \bQ_p)$ is crystalline 
  after taking a finite unramified extension of $K$
  and a quadratic twist of $A$,
 \item[(Km3)] 
 hence obtain a good model $\cA$ 
  (by Coleman--Iovita~\cite[Theorem 4.7]{Coleman--Iovita}), 
 \item[(Km4)] 
 construct a good model $\cX$
  using the rational map $A_{\overline K} \rightarrow X_{\overline K}$,
  and
 \item[(Km5)] 
  show that we can take the extension in step (Km1) to be unramified.
\end{enumerate}
Similarly, 
for Theorem~\ref{thm:critcrysSI}, 
given a K3 surface $Y$ admitting a Shioda--Inose structure of product type as in the theorem, 
we 
\begin{enumerate}
 \item[(SI1)]
  construct a Kummer surface $X$ corresponding to it
  over a finite extension of $K$, 
 \item[(SI2)]
  (using the crystallineness hypothesis) 
  show that $H^2_\et(X_{\overline K}, \bQ_p)$ is crystalline 
  after taking a finite extension of $K$ of ramification index $1$, $2$, $3$, $4$ or $6$, 
 \item[(SI3)]
  hence by Theorem~\ref{thm:critcrysKum}, after some finite unramified extension, obtain a good model $\cX$,
 \item[(SI3$'$)] which we may assume (after further finite unramified extension) to be obtained from 
 the product of two elliptic curves and hence have $24$ specific smooth ``rational curves'',
 \item[(SI4)]
  construct a good model $\cY$
  using the rational map $X_{\overline K} \rightarrow Y_{\overline K}$,
  and
 \item[(SI5)]
  show that we can take the extension in step (SI1) to be unramified.
\end{enumerate}

\medskip

Steps (Km1) and (SI1) are easy.
As in the $l$-adic case, 
it suffices to take an extension over which certain (finitely many) curves are rational.
(Note that we do not, at this moment, require the extension to be unramified.)

Step (Km2):
Let $X$ be a Kummer surface such that $H^2_\et(X_{\overline K}, \bQ_p)$ is crystalline and
$A$ an abelian surface over $K$ 
such that $\Km A \cong X$.
We show that (after taking some finite unramified extension of $K$) there exists an abelian surface $A'$ such that $\Km A' \cong \Km A$ and 
$H^1_\et(A'_{\overline K}, \bQ_p)$ is semi-stable, 
and then show that it is automatically crystalline.

There exists a finite Galois extension $L/K$ such that $V = H^1_\et(A_{\overline K}, \bQ_p)$ is a semi-stable representation of $G_L$.
By replacing $K$ by its unramified closure in $L$, 
we may assume that $L/K$ is totally ramified.
Put $D = D_{\st,L}(V) = (B_\st \otimes V)^{G_L}$. 
Since $D_{\st,L}$ commutes with exterior product for semi-stable representations of $G_L$, we have
\[ 
 D_{\st,K}(\bigwedge^2 V) = \bigl( D_{\st,L}(\bigwedge^2 V) \bigr)^G = (\bigwedge^2 D)^G
\]
where $G = \Gal(L/K)$.
Since 
$\bigwedge^2 V = H^2_\et(A_{\overline K}, \bQ_p)$ is a crystalline (hence semi-stable) representation of $G_K$ by assumption, 
and since $V$ is semi-stable representation of $G_L$, 
we have 
\[
 \dim_{K_0} \bigl( \bigwedge^2 D \bigr)^G = \dim_{\bQ_p} \bigwedge^2 V = \dim_{L_0} \bigwedge^2 D.
\]
Since $L_0 = K_0$, it follows that $G$ acts on $\bigwedge^2 D$ trivially.
Then by the next lemma, 
there exists a subgroup $G' \subset G$ of index at most $2$
such that $G'$ acts on $D$ trivially. 

\begin{lemma}
 \label{lem:wedge2}
Let $W$ be a vector space over a field 
with $\dim W \geq 3$
and $f$ a linear automorphism of $W$.
If $\bigwedge^2 f$ acts as the identity on $\bigwedge^2 W$ then
$f$ is either the identity or $(-1)$ times the identity.
\end{lemma}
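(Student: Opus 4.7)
The plan is to translate the hypothesis into a geometric statement about how $f$ acts on the Grassmannian of $2$-planes in $W$, and then exploit $\dim W \geq 3$ to pin down $f$ on lines.

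First I would unpack the hypothesis: $(\bigwedge^2 f)(v \wedge w) = v \wedge w$ for all $v, w \in W$, i.e.\ $f(v) \wedge f(w) = v \wedge w$ in $\bigwedge^2 W$. When $v$ and $w$ are linearly independent, $v \wedge w \neq 0$, so $f(v)$ and $f(w)$ are also linearly independent and span the same $2$-plane. Thus $f$ preserves every $2$-dimensional subspace $P \subset W$ setwise (for any $u \in P$, writing $u$ in a basis of $P$ shows $f(u) \in P$).

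Next I would show that $f$ is a scalar operator. Fix any nonzero $v \in W$. Because $\dim W \geq 3$, I can choose $w_1, w_2 \in W$ so that $v, w_1, w_2$ are linearly independent; then $P_i := \mathrm{span}(v, w_i)$ are two distinct $2$-planes through $v$ with $\dim(P_1 \cap P_2) = 1$, hence $P_1 \cap P_2 = \mathrm{span}(v)$. Since $f(v) \in P_1 \cap P_2$, we get $f(v) = \lambda_v v$. So every nonzero vector is an eigenvector of $f$; a standard one-line argument (comparing $f(v), f(w), f(v+w)$ for linearly independent $v, w$) then forces all $\lambda_v$ to coincide, so $f = \lambda \cdot \mathrm{id}_W$.

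Finally, $\bigwedge^2 f = \lambda^2 \cdot \mathrm{id}_{\bigwedge^2 W}$, and the hypothesis gives $\lambda^2 = 1$, hence $\lambda = \pm 1$. The only place where the dimension assumption enters is in the second step (choosing two distinct $2$-planes through $v$ with one-dimensional intersection); this is the crux but is completely elementary, so I do not anticipate any real obstacle. The argument works over an arbitrary field.
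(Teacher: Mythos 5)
Your proof is correct and complete. The paper itself offers no argument here --- it dismisses the lemma as ``an easy exercise of linear algebra'' --- so there is nothing to compare against; your argument (a nonzero decomposable $2$-vector determines its $2$-plane, hence $f$ preserves every $2$-plane; since $\dim W \geq 3$, any line is the intersection of two such planes, so every vector is an eigenvector and $f$ is scalar with $\lambda^2 = 1$) is exactly the kind of elementary verification the author had in mind, valid over any field, and correctly isolates where $\dim W \geq 3$ is needed (for $\dim W = 2$ the hypothesis only says $\det f = 1$).
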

\begin{proof}
This is an easy exercise of linear algebra. 
\end{proof}

If $G' = G$ then $H^1_\et(A_{\overline K}, \bQ_p)$ is already semi-stable, so we can take $A' = A$.
Assume $G' \subsetneq G$.
We follow the construction in the proof of Theorem~\ref{athm:Kum}.
Let $M$ be the quadratic extension of $K$ corresponding to $G'$.
Then we have $\dim_{K_0} D^{G'} = \dim_{K_0} D = \dim_{\bQ_p} V$, 
which means that $V$ is semi-stable as a representation of $G_M = G'$.
Put $G'' = G/G' = \Gal(M/K)$.
Let $G'' \cong \{ \pm 1 \}$ act on $A$ by $\pm 1$ 
and consider another abelian surface $A' = (A \times_K M)/G''$ over $K$, 
where $G''$ acts diagonally on $A \times_K M$. 
It is clear from the construction that $\Km A \cong \Km A'$.
The surfaces $A'_M$ and $A_M$ are naturally isomorphic 
and hence $H^1_\et(A'_{\overline K}, \bQ_p) \cong H^1_\et(A_{\overline K}, \bQ_p)$ 
as a representation of $G_M$.
The action of $g \in G_K$ on $H^1_\et(A'_{\overline K}, \bQ_p)$ 
is equal to $\pm 1$ times the action of $g$ on $H^1_\et(A_{\overline K}, \bQ_p)$ 
where the sign is positive if $g \in G_M$ and negative otherwise.

Put $V' = H^1_\et(A'_{\overline K}, \bQ_p)$ and 
$D' = D_{\st,L}(V') = (V' \otimes B_\st)^{G_L}$.
Then as in the proof of Theorem~\ref{athm:Kum}, 
$D_{\st,K}(V') = (D')^{G_K}$ has appropriate dimension over $K_0$. 
Thus $V'$ is a semi-stable representation of $G_K$.

It remains to show that the monodromy $N$ of $D_{\st,K}(V')$ is zero.
From the crystallineness hypothesis the monodromy of $D_{\st,K}(\bigwedge^2 V')$ is zero, 
that is, $N \wedge 1 + 1 \wedge N = 0$ on $\bigwedge^2 D_{\st,K}(V')$. 
By applying Lemma~\ref{lem:wedge2} to $\exp N = \sum N^k/k!$ (which is a finite sum since $N$ is nilpotent)
we obtain $\exp N = \pm 1$ and hence $N = 0$.
This means $V'$ is crystalline, 
thus concludes step (Km2).

\medskip

Step (SI2):
Assume we are given surfaces $Y$ and $X$
where $X$ is the minimal desingularization of $Y / \langle \iota \rangle$.
As in the $l$-adic case we have 
\[
 H^2_\et(X_{\overline K}, \bQ_p) 
 \cong H^2_\et(Y_{\overline K}, \bQ_p)^{\langle \iota \rangle} \oplus \bigoplus_{i=1}^8 \bQ_p(-1) [E_i] 
\]
where the last term is the Tate twist of the permutation representation 
corresponding to the eight $\iota$-fixed points.
Let $H \subset G_K$ be (as before) the kernel of this permutation action
and $K_H/K$ the corresponding (finite) extension, 
which is of ramification index $1$, $2$, $3$, $4$ or $6$.
Then $\bigoplus \bQ_p(-1) [E_i]$, being the Tate twist of a trivial representation, 
is a crystalline representation of $G_{K_H}$.
Also $H^2_\et(Y_{\overline K}, \bQ_p)^{\langle \iota \rangle}$, 
being a direct summand of a crystalline representation $H^2_\et(Y_{\overline K}, \bQ_p)$, is crystalline.
So step (SI2) is done.

\medskip

Steps (Km3) and (SI3) are just applying the indicated results.

For step (SI3$'$), 
we need a $p$-adic analogue of Lemma~\ref{lem:prodabelsurf}.
But we can reduce this to ($l$-adic) Lemma~\ref{lem:prodabelsurf}:
if $H^1_\et(A_{\overline K}, \bQ_p)$ is crystalline
then $A$ has good reduction and hence $H^1_\et(A_{\overline K}, \bQ_l)$ is unramified.

Steps (Km4) and (SI4) are the same as in the $l$-adic case.

For steps (Km5) and (SI5), we need a $p$-adic analogue of Lemma~\ref{lem:-2curves}.
The next proposition reduces this to ($l$-adic) Lemma~\ref{lem:-2curves}.
(Note that the potential good reduction assumption is satisfied since 
we have already proved steps (Km1--Km4) and (SI1--SI4).)

\begin{proposition}
 \label{prop:crysunramext}
Let $X$ be a K3 surface over $K$ with potential good reduction.
Assume that $H^2_\et(X_{\overline K}, \bQ_p)$ is crystalline.
Then $H^2_\et(X_{\overline K}, \bQ_l)$ is unramified for any prime $l \neq p$.
\end{proposition}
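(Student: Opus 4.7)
The plan is to exploit a $\Gamma$-action on a good model, where $\Gamma$ is the Galois group of a totally ramified extension realizing good reduction, and to compare the induced $\Gamma$-actions on the $\ell$-adic and crystalline cohomologies of the special fiber.

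By potential good reduction, pick a finite Galois extension $K'/K$ over which $X$ acquires good reduction, with smooth proper model $\cX'/\cO_{K'}$ and special fiber $X_0$. Replacing $K$ by $K' \cap K^{\un}$ (an unramified extension, under which both the crystalline hypothesis and the conclusion are preserved), I may assume $K'/K$ is totally ramified, so that $\Gamma := \Gal(K'/K) = I_K/I_{K'}$, the residue fields agree, and $K_0 = K_0'$. Uniqueness of smooth proper models of K3 surfaces over a discrete valuation ring, a consequence of minimal model theory for surfaces of nonnegative Kodaira dimension, yields a $\Gamma$-action on $\cX'$ over $\cO_K$; restricting to $X_0$ gives a $\Gamma$-action by $k$-automorphisms. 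Under the canonical identifications $V_l|_{G_{K'}} \cong H^2_\et(X_0, \bQ_l)$ (smooth proper base change) and $D_{\crys,K'}(V_p) \cong H^2_{\crys}(X_0/W(k))[1/p]$ (Tsuji), the induced $\Gamma$-actions on both sides come from the geometric $\Gamma$-action on $X_0$, and since $I_{K'}$ acts trivially on $V_l$, the proposition reduces to showing that $\Gamma$ acts trivially on $V_l$.

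The crystalline hypothesis on $V_p$, combined with $K_0 = K_0'$, forces $D_{\crys, K}(V_p) = D_{\crys, K'}(V_p)$ by a dimension count, so $\Gamma$ acts trivially on $H^2_{\crys}(X_0/W(k))[1/p]$. To transfer this to $V_l$, I use the Grothendieck--Lefschetz trace formulas for $H^*_\et(X_0, \bQ_l)$ and $H^*_{\crys}(X_0/W(k))[1/p]$: for each $\gamma \in \Gamma$ and each $m \geq 1$, both compute $\Tr((\gamma^*)^m)$ as the same count of fixed points of $\gamma^m$. Since $H^1 = H^3 = 0$ on a K3 surface and $\gamma^*$ is the identity on $H^0$ and $H^4$, the characteristic polynomials of $\gamma^*$ on $H^2_\et$ and $H^2_{\crys}$ agree. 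Triviality on the crystalline side thus makes all eigenvalues of $\gamma^*$ on $V_l$ equal to $1$, and semisimplicity of a finite-order operator in characteristic zero gives $\gamma^* = \id$ on $V_l$; hence $\Gamma$, and thus all of $I_K$, acts trivially on $V_l$.

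The main subtleties are (a) the uniqueness of smooth proper K3 models, which is essential for producing the $\Gamma$-equivariant structure on $\cX'$, and (b) invoking the crystalline Lefschetz trace formula for a finite-order automorphism, which in the case of orders coprime to $p$ — the case relevant to the applications of this proposition in the paper — follows from lifting the action to characteristic zero.
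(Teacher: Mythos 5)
Your argument is correct in substance, but it takes a genuinely different route from the paper. The paper never puts a Galois action on the smooth model: for each $\sigma$ in the inertia quotient it forms the scheme-theoretic closure of the diagonal inside $\cX \times_{\cO_{K'}} \cX^\sigma$, converts it into a cycle class on the special fiber by a K-theoretic Chern-character construction, and then invokes Saito's compatibility of this specialized correspondence with $\sigma_*$ on $\ell$-adic cohomology, together with the Tsuji/Berthelot--Ogus compatibility of the crystalline comparison with correspondences; the final trace-and-eigenvalue step is the same as yours. You instead use uniqueness of the smooth proper model of a K3 surface over $\cO_{K'}$ (Matsusaka--Mumford, applicable because the special fiber of such a model is again a K3, hence non-ruled) to extend the descent data to an honest $\Gamma$-action on the model and its special fiber, and then compare Lefschetz numbers of genuine automorphisms in the two cohomologies. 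Your route buys geometric transparency and avoids Saito's cycle-theoretic results; its costs are the model-uniqueness input (including the check that the special fiber is a K3) and the functoriality of the crystalline comparison under the semilinear $\Gamma$-action, which you assert but which deserves the same kind of citation the paper gives for correspondences.

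One point needs repair: your justification of the crystalline Lefschetz trace formula. You restrict to automorphisms of order prime to $p$ and propose to lift the action to characteristic zero; lifting a K3 together with a finite-order automorphism is not automatic, and moreover the proposition as stated allows $K'/K$ to be wildly ramified, so $\gamma$ may have order divisible by $p$. Neither the restriction nor the lifting is needed: crystalline cohomology with $W(k)[1/p]$-coefficients is a Weil cohomology (Gillet--Messing, Katz--Messing), so $\sum_i(-1)^i\Tr\bigl(\gamma^*\mid H^i_\crys\bigr)$ equals the graph-diagonal intersection number for any endomorphism (note this intersection number, not a naive count of fixed points, is what both trace formulas compute, since fixed loci need not be isolated). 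The paper itself uses this unconditional crystalline trace formula for its correspondence $\overline\Gamma$. With that substitution your argument proves the proposition in full generality.
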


\begin{proof}
Take a finite Galois extension $K'/K$ such that 
$X_{K'}$ has good reduction.
Then the action of $I_K$ on $H^2_\et(X_{\overline K}, \bQ_l)$ 
factors through $I(K'/K) = I_K/I_{K'}$.
Let $\cX$ be a good model of $X_{K'}$ over $\cO_{K'}$, 
and let $\overline X = \cX \times_{\cO_{K'}} k'$.

Take an arbitrary element $\sigma \in I({K'}/K)$ 
and denote by $\cX^\sigma$ the scheme obtained 
from $\cX$
by the base change 
$\sigma^* \colon \Spec \cO_{K'} \rightarrow \Spec \cO_{K'}$.
Denote by $\Gamma$ the (scheme-theoretic) closure of 
$\Delta(X_{K'})$ in $\cX \times_{\cO_{K'}} \cX^\sigma$
(where $\Delta \colon X_{K'} \rightarrow X_{K'} \times_{K'} X_{K'}$ is the diagonal map).
Since $\cX \times_{\cO_{K'}} \cX^\sigma$ is regular
there exists a resolution $\cE_\bullet \rightarrow \cO_\Gamma$ of finite length
by locally free modules of finite rank.
Put 
\begin{align*}
 \widetilde \Gamma 
  &= \sum_i (-1)^i \ch_2(\cE_i) 
  && \in \CH^2(\cX \times_{\cO_{K'}} \cX^\sigma)_\bQ
\quad \text{and} \\
 \overline \Gamma = \widetilde \Gamma |_{\overline X \times_{k'} \overline X}
  &= \sum_i (-1)^i \ch_2(\cE_i|_{\overline X \times_{k'} \overline X}) 
  && \in \CH^2(\overline X \times_{k'} \overline X)_\bQ
\end{align*}
(these do not depend on the choice of the resolution).

Then by Riemann--Roch (see \cite[Lemma 2.17]{Saito:weightSS})
the restriction of $\widetilde \Gamma$ on the generic fiber 
coincides with $\Delta(X_{K'})$.
Hence we have a commutative diagram 
\[
\xymatrix{
  H^i_\et(X_{\overline K}, \bQ_l)
  \ar[d]_{\sigma_*}
& H^i_\et(\overline X_{\overline {k'}}, \bQ_l)
  \ar[l]_{=}
  \ar[d]_{\overline \Gamma^*}
\\
  H^i_\et(X_{\overline K}, \bQ_l)
& H^i_\et(\overline X_{\overline {k'}}, \bQ_l)
  \ar[l]_{=}
}
\]
(Saito~\cite[Corollary 2.20]{Saito:weightSS}) and hence an equality 
\[
\Tr(\sigma_*  \mid  H^i_\et(X_{\overline K}, \bQ_l))
 = \Tr(\overline \Gamma^*  \mid  H^i_\et(\overline X_{\overline {k'}}, \bQ_l)).
\]

By the Lefschetz trace formula we have
\begin{gather*}
 \sum_i (-1)^i \Tr( \overline \Gamma^*  \mid  H^i_\et(\overline X_{\overline {k'}}, \bQ_l))
=
 (\overline \Gamma, \Delta(\overline X)) 
= 
 \sum_i (-1)^i \Tr( \overline \Gamma^*  \mid  H^i_\crys(\overline X))
\end{gather*}
where the intersection number is taken in $\overline X \times_{k'} \overline X$.

Since the isomorphism in the crystalline conjecture is compatible
with pull-backs, cup products with cycle classes and direct images
(Tsuji~\cite{Tsuji:Cst}, \cite[Theorem A2]{Tsuji:Cstsurvey} 
and Berthelot--Ogus~\cite[Proposition 3.4]{Berthelot--Ogus:dRcrys}), 
it is compatible with the action of a correspondence. 
So we have a commutative diagram
\[
\xymatrix{
  D_{\crys,K'}(H^i_\et(X_{\overline K}, \bQ_p))
  \ar[r]^(0.65){=}
  \ar[d]_{\sigma_*}
& H^i_\crys(\overline X) 
  \ar[d]_{\overline \Gamma^*}
\\
  D_{\crys,K'}(H^i_\et(X_{\overline K}, \bQ_p))
  \ar[r]^(0.65){=}
& H^i_\crys(\overline X)
.
}
\]
Since $H^i_\et(X_{\overline K}, \bQ_p)$ is a crystalline representation for all $i$ 
(for $i=2$ this is the assumption, for $i=0,4$ it is clear and for $i=1,3$ the cohomologies vanish), 
we have 
\[
 D_{\crys,K'}(H^i_\et(X_{\overline K}, \bQ_p)) = D_{\crys,K}(H^i_\et(X_{\overline K}, \bQ_p)) \otimes_{K_0} K'_0
\] 
and hence $I(K'/K)$ acts on $D_{\crys,K'}(H^i_\et(X_{\overline K}, \bQ_p))$ trivially.
By the above diagram, 
$\overline \Gamma^*$ acts on $H^i_\crys (\overline X)$ trivially.
So 
\[ 
  \Tr( \overline \Gamma^*  \mid  H^i_\crys(\overline X))
 = \dim_{K'_0} H^i_\crys(\overline X)
 = \dim_{\bQ_p} H^i_\et(X_{\overline K}, \bQ_p)
.\]

Finally (by comparing both sides with the Betti numbers) we have 
\[ 
  \dim_{\bQ_p} H^i_\et(X_{\overline K}, \bQ_p)
= \dim_{\bQ_l} H^i_\et(X_{\overline K}, \bQ_l)
.\]

Combining these equalities we obtain 
\[
 \sum_i \Tr( \sigma_*  \mid  H^i_\et(X_{\overline K}, \bQ_l))
= 
 \sum_i \dim_{\bQ_l} H^i_\et(X_{\overline K}, \bQ_l)
\]
(note again that $H^1 = H^3 = 0$).
Thus each element in $I(K'/K)$ acts on $H^*_\et(X_{\overline K}, \bQ_l)$ 
by trace equal to the dimension of this $\bQ_l$-vector space.
It then follows that the action of this group is trivial.
\end{proof}

\section{Appendix: Good reduction of Kummer surfaces}
 \label{sec:Kummer}

We record the proof of Theorem~\ref{thm:critetKum} from \cite{Ito:Kummer}.

As in section \ref{sec:lproof}, 
we argue as if $\charac K = 0$.

\medskip

First we review the relation between Kummer surfaces and abelian surfaces.

Let $F$ be a field of characteristic $\neq 2$. 
Let $A$ be an abelian surface over $\overline F$, and $X = \Km(A)$.
Let $G = \{ \id, \iota \}$ where $\iota$ is the multiplication-by-$(-1)$ map of $A$. 
The surface $X$ is, by definition, obtained by blow-up at $16$ singular points of $A/G$.
However we get $X$ from $A$ in another way as follows.

Let $\tilde A$ be the blow up of $A$ at $A[2]$. 
Since $A[2]$ is the fixed points of the action of $G$, 
we can extend the action of $G$ on $\tilde A$.
Then the quotient variety $\tilde A /G$ is naturally isomorphic to $X$.
We have a cartesian diagram 
\[
\xymatrix{
 \tilde A \ar[r] \ar[d] 
 & \tilde A/G \cong X \ar[d]
 \\ 
 A \ar[r]
 & A/G ,
}
\]
where the horizontal maps are the quotient maps and the vertical maps are blow-ups at $16$ points.
Let $Z$ be the exceptional divisor of the blow-up $X \rightarrow A/G$.
This is the union of $16$ curves of self-intersection $-2$.
By construction,  $\tilde A \rightarrow X$ is a double covering whose branch locus is $Z$.

\medskip

We prove the following special case of Theorem~\ref{thm:critetKum}.

\begin{theorem} \label{athm:Kum}
Let $A$ be an abelian surface over $K$ 
and $X = \Km(A)$.
Then $X$ has good reduction if and only if $H^2_\et(X_{\overline K}, \bQ_l)$ is unramified.
\end{theorem}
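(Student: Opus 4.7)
The plan is to deduce the nontrivial ``if'' direction from the N\'eron--Ogg--\v Safarevi\v c criterion (Theorem~\ref{thm:critAV}) applied to $A$ or to a suitable quadratic twist; the ``only if'' direction is immediate from Theorem~\ref{thm:critgen}.

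First I would compare $l$-adic cohomologies. Using the double cover $\tilde A \to X$ (which, as $p \ne 2$, gives $H^2(X) \cong H^2(\tilde A)^G$) and the blow-up formula for $\tilde A \to A$, we obtain
\[
 H^2_\et(X_{\overline K}, \bQ_l) \cong H^2_\et(\tilde A_{\overline K}, \bQ_l)^G
 = H^2_\et(A_{\overline K}, \bQ_l) \oplus \bigoplus_{i=1}^{16} \bQ_l(-1)[E_i'],
\]
where the involution $\iota = [-1]_A$ acts trivially on each summand (on $H^2(A) = \bigwedge^2 H^1(A)$ because $\bigwedge^2(-1) = \id$, and on each exceptional class $[E_i']$ because $\iota$ fixes $E_i'$ setwise). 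Setting $V := H^1_\et(A_{\overline K}, \bQ_l)$, it follows that $\bigwedge^2 V \cong H^2_\et(A_{\overline K}, \bQ_l)(1)$ is an unramified $G_K$-representation.

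Next I would analyze the action of $I_K$ on $V$. For each $\sigma \in I_K$, $\bigwedge^2 \sigma$ acts trivially on $\bigwedge^2 V$, so by Lemma~\ref{lem:wedge2} (using $\dim V = 4 \geq 3$) $\sigma$ acts on $V$ as $\pm 1$. This defines a continuous character $\chi : I_K \to \{\pm 1\}$. Since $p \ne 2$, the wild inertia (pro-$p$) is killed by $\chi$, while the tame quotient admits a unique nontrivial quadratic character; so $\chi$ is either trivial or equals this canonical character. In the former case $V$ is unramified and Theorem~\ref{thm:critAV} directly gives good reduction of $A$. In the latter, $\chi$ extends to the global quadratic character $\psi : G_K \to \{\pm 1\}$ attached to the tamely ramified extension $K(\sqrt \pi)/K$ (for a uniformizer $\pi$ of $K$); the quadratic twist $A' := A^\psi$ is an abelian surface over $K$ satisfying $\Km(A') \cong \Km(A) = X$ (the $\pm 1$-twist is absorbed by the quotient by $[-1]$) and $H^1_\et(A'_{\overline K}, \bQ_l) \cong V \otimes \psi$, which is unramified on all of $I_K$, so $A'$ has good reduction by Theorem~\ref{thm:critAV}.

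Finally I would construct a good model $\cX$ of $X$ over $\cO_K$ from the smooth proper model $\cA'$ of $A'$, paralleling the characteristic-zero description of $\Km$: as $p \ne 2$, the subgroup scheme $\cA'[2] \subset \cA'$ is finite \'etale over $\cO_K$; setting $\widetilde{\cA'} := \Bl_{\cA'[2]} \cA'$, the $[-1]$-involution extends to it with fixed locus equal to the exceptional divisor (smooth over $\cO_K$), and I would define $\cX := \widetilde{\cA'}/G$. The main obstacle I anticipate is verifying that this $\cX$ is smooth and proper over $\cO_K$ with generic fiber $X$: smoothness requires a local tangent-space computation showing that the tame $G$-quotient of $\widetilde{\cA'}$ is smooth near the fixed divisor (using $p \ne 2$ to linearize the action), while the identification $\cX_K \cong X$ on the generic fiber follows from Lemma~\ref{lem:blupatsmpt} together with the standard description $\Km(A) \cong (\Bl_{A[2]} A)/G$.
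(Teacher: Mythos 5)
Your proposal is correct and follows essentially the same route as the paper: unramifiedness of $H^2$ of $X$ gives unramifiedness of $\bigwedge^2 H^1_\et(A_{\overline K},\bQ_l)$, Lemma~\ref{lem:wedge2} produces the quadratic character of inertia, the ramified quadratic twist $A'$ (the paper writes it as $(A\times_K L)/\Gal(L/K)$ for $L$ ramified quadratic) has unramified $H^1$ and hence good reduction by Theorem~\ref{thm:critAV}, and the model of $X$ is obtained exactly as in Lemma~\ref{alem:Kum} by blowing up the abelian scheme along its $2$-torsion and taking the quotient by the extended involution. (The Tate twist in your displayed isomorphism $\bigwedge^2 V \cong H^2_\et(A_{\overline K},\bQ_l)(1)$ is spurious, but harmless since $\bQ_l(1)$ is unramified for $l\neq p$.)
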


\begin{lemma} \label{alem:Kum}
Let $A$ be an abelian surface over $K$ 
and $X = \Km(A)$.
If $A$ has good reduction, 
then $X$ has good reduction.
\end{lemma}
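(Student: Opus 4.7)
The plan is to imitate, directly over $\cO_K$, the alternative construction of $X = \Km(A)$ that was just recalled: blow up the $2$-torsion and then quotient by the sign involution. Concretely, let $\cA$ be the N\'eron model of $A$ over $\cO_K$, which by assumption is a proper smooth abelian scheme. The multiplication-by-$(-1)$ map on $A$ extends uniquely by the N\'eron mapping property to an involution $\iota_\cA$ of $\cA$ over $\cO_K$. Since the residue characteristic satisfies $p \neq 2$, the fixed-point subscheme $\cA[2] \subset \cA$ is finite \'etale over $\cO_K$, hence a regular closed subscheme of the smooth $\cO_K$-scheme $\cA$. Let $\widetilde\cA \rightarrow \cA$ be the blow-up along $\cA[2]$; since we are blowing up a smooth scheme along a smooth (relative) subscheme, $\widetilde\cA$ is smooth and proper over $\cO_K$.

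Next, $\iota_\cA$ lifts canonically to an involution $\widetilde\iota$ of $\widetilde\cA$ (since it preserves the center $\cA[2]$ of the blow-up), and I would form the quotient $\cX := \widetilde\cA / \langle \widetilde\iota \rangle$. I would then check that this $\cX$ is a smooth proper $\cO_K$-scheme whose generic fiber is $X$. Smoothness is the heart of the matter and is a local computation on $\widetilde\cA$ near the exceptional divisor: at a point of $\cA[2]$, \'etale-locally we may pick coordinates $(x,y)$ on $\cA$ in which $\iota_\cA$ is $(x,y) \mapsto (-x,-y)$; on a standard chart of the blow-up with coordinates $(x, z)$, where $z = y/x$, the lifted involution is $(x,z) \mapsto (-x,z)$, whose fixed locus is the exceptional divisor and whose ring of invariants is $\cO_K[x^2, z]$, still a smooth $\cO_K$-algebra. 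This uses $2 \in \cO_K^*$ in an essential way (the action is tame), and by descent the global quotient $\cX$ is smooth over $\cO_K$. Properness is automatic since $\widetilde\cA$ is proper and the quotient map is finite.

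Finally, I would identify the fibers. Because the blow-up of $\cA$ along the relatively flat center $\cA[2]$ commutes with base change (Lemma~\ref{lem:blupatsmpt} applied \'etale-locally, where $\cA[2]$ becomes a disjoint union of sections), and because the quotient by the order-$2$ group scheme $\langle \widetilde\iota \rangle$ commutes with flat base change under tameness ($p\neq 2$), one has
\[
 \cX \times_{\cO_K} K \cong \widetilde A / \langle \iota \rangle \cong X,
 \qquad
 \cX \times_{\cO_K} k \cong \Km(\cA_k),
\]
so $\cX$ is the desired good model of $X$. The main obstacle I expect is the smoothness of the quotient in the second paragraph: it is not enough that the generic and special fibers each be smooth Kummer surfaces, one needs smoothness of the total space, and this relies on carefully handling the relative local model of the quotient singularity $\mathbb{A}^2/\{\pm 1\}$ after one blow-up, uniformly in $\cO_K$. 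Once this local picture is verified over $\cO_K$ using $2 \in \cO_K^*$, the rest is routine bookkeeping.
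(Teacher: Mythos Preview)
Your proposal is correct and follows essentially the same route as the paper: take the N\'eron model $\cA$, extend $(-1)$ via the N\'eron mapping property, blow up along $\cA[2]$ (smooth over $\cO_K$ since $p\neq 2$), and quotient by the lifted involution. The paper's proof is terser---it simply asserts that $\widetilde\cA/\langle\tilde\iota\rangle$ is a proper smooth model of $X$---whereas you supply the explicit \'etale-local computation showing the quotient is smooth and the fiber identification via Lemma~\ref{lem:blupatsmpt}; these details are welcome but do not change the strategy.
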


\begin{proof}
The N\'eron model $\cA$ of $A$ over $\cO_K$ is an abelian scheme over $\cO_K$ (\cite[Proposition 1.4/2]{BLR:neronmodels}).
By the N\'eron mapping property, 
the multiplication-by-$(-1)$ map on $A$
has a natural extension to an involution $\iota \colon \cA \rightarrow \cA$.
In the special fiber, $\iota$ is the multiplication-by-$(-1)$ map and has exactly $16$ fixed points 
(because the residue characteristic $p$ is not equal to $2$).
Let $\tilde \cA$ be the blow up of $\cA$ at $\cA[2]$: 
this is smooth over $\cO_K$ by Lemma~\ref{lem:blupatsmpt}. 
Then $\iota$ induces an involution $\tilde \iota$ on $\tilde \cA$.
Taking the quotient of $\tilde \cA$ by $\tilde \iota$, 
we obtain a proper smooth model of $X$ over $\cO_K$.
\end{proof}

\begin{proof}[Theorem \ref{athm:Kum}]
Since 
$H^2_\et(A_{\overline K}, \bQ_l)$ is a direct summand of $H^2_\et(X_{\overline K}, \bQ_l)$, 
the inertia group $I_K$ acts trivially on $H^2_\et(A_{\overline K}, \bQ_l)$, which is isomorphic to $\bigwedge^2 H^1_\et(A_{\overline K}, \bQ_l)$.
By Lemma \ref{lem:wedge2}, we have a homomorphism $f \colon I_K \rightarrow \{ \pm1 \}$
which the action of $I_K$ on $H^1_\et(A_{\overline K}, \bQ_l)$ factors through.
If $f$ is trivial we are done by applying Theorem~\ref{thm:critAV} and Lemma~\ref{alem:Kum}. 
Assume that $f$ is not trivial.
The idea is to take a quadratic twist of $A$ to get an abelian surface $A'$ over $K$
such that $A'$ has good reduction over $K$ and that $\Km(A) \cong \Km(A')$.

Let $L$ be a ramified quadratic extension of $K$. 
Since the kernel of $f$ corresponds to the (unique) ramified quadratic extension $L K^\un$ of $K^\un$, 
the homomorphism $\tilde f \colon G_K \twoheadrightarrow \Gal(L/K) \cong \{ \pm 1 \}$ extends $f$.
Let $G = \{\id, -1 \}$ be a group of automorphisms of $A$, 
and fix the unique isomorphism $\Gal(L/K) \cong G$ (thus we have an action of $\Gal(L/K)$ on $A$). 
We take a quotient $A' = (A \times_K L) /\Gal(L/K)$ (where $\Gal(L/K)$ acts diagonally on $A \times_K L$).

We shall see that this $A'$ satisfies the desired conditions.
By the above construction we have $A_L \cong A'_L$, 
and hence $I_L$ acts on $H^1_\et(A'_{\overline K}, \bQ_l)$ trivially.
Since $I_K$ acts on $H^1_\et(A_{\overline K}, \bQ_l)$ by $I_K \rightarrow I_K/I_L \cong \Gal(L/K) \cong \{ \pm 1 \}$, 
and since the involution $-1 \in G$ acts by $-1$, 
it follows that $I_K$ acts on $H^1_\et(A'_{\overline K}, \bQ_l)$ trivially.
Hence $A'$ has good reduction over $\cO_K$ by Theorem~\ref{thm:critAV}.

It is easy to see $\Km(A) \cong \Km(A')$: 
the effect of the quadratic twist vanishes after we take the quotients by $G$.
It remains to apply Lemma~\ref{alem:Kum}.
\end{proof}

Now we prove Theorem \ref{thm:critetKum} by reducing to Theorem \ref{athm:Kum}.

\begin{proof}[Theorem~\ref{thm:critetKum}]
By Theorem~\ref{athm:Kum}, 
it suffices to show that,
for some finite unramified extension $K'$ of $K$, 
$X_{K'}$ can be written as $X_{K'} = \Km A$ for an abelian surface $A$ over $K'$.  

Let $A_{\overline K}$ be an abelian surface over $\overline K$ such that $X_{\overline K} = \Km (A_{\overline K})$, 
and let $Z_{\overline K}$ be the exceptional divisor of the blow-up $X_{\overline K} \rightarrow A_{\overline K}/\{\pm 1\}$.
By the description given at the beginning of this section, 
there is a double covering of $X_{\overline K}$ whose branch locus is $Z_{\overline K}$. 
By Lemma~\ref{lem:-2curves}, 
all the components of $Z_{\overline K}$ and the covering are defined over some finite unramified extension $K'$ of $K$.
Write $X_{\overline K} = X_{K'} \times_{K'} \overline K$ and $Z_{\overline K} = Z_{K'} \times_{K'} \overline K$.
Since we know that the inverse image of $Z_{K'}$ is, over $\overline K$,
the disjoint union of $16$ rational curves of self-intersection $-1$,
we can blow down this inverse image to get a variety $A$ over $K'$.
We see that $A \times_{K'} \overline K$ is the abelian surface $A_{\overline K}$ over $\overline K$ 
by the uniqueness of the double covering (Lemma~\ref{lem:2-cover}).
Moreover, 
the origin of $A_{\overline K}$, being the image of one of the contracted curves which was defined over $K'$, 
is a $K'$-rational point. 
Therefore we see that $A$ is an abelian surface over $K'$ such that $X_{K'} = \Km(A)$ over $K'$.
\end{proof}

\begin{remark}
 \label{rem:degboundKum}
We can give an explicit bound for the \emph{separable} degree of field extension.

By the proof of Theorem~\ref{thm:critetKum}, 
it suffices to estimate the degree of $K'$ such that $G_{K'}$ acts trivially on $\NS(X_{\overline K})$.
The rank of $\NS(X_{\overline K})$ is less than or equal to $22$ ($= \dim_{\bQ_l} H^2_\et(X_{\overline K}, \bQ_l)$).
Since every divisor on $X_{\overline K}$ can be defined over a finite extension of $K$, 
the image of $G_K$ in $GL(\NS(X_{\overline K}))$ is torsion. 
So it remains to give a bound for the order of a torsion subgroup of $GL(22, \bZ)$.

Take any prime number $l \geq 3$ (which we do not assume to be different from the characteristic).
By the exact sequence 
\[
 1
  \rightarrow 1 + l M(22, \bZ_l)
  \rightarrow GL(22, \bZ_l)
  \rightarrow GL(22, \bF_l)
  \rightarrow 1
\]
and the fact that $1 + l M(22, \bZ_l)$ is torsion-free, 
a torsion subgroup of $GL(22, \bZ)$
has order $\leq \lvert GL(22, \bF_l) \rvert \leq l^{22^2}$.
We can (by choosing $l=3$) take $3^{484}$ as a bound.
Of course this bound is too rough.
\end{remark}

\subsection*{Acknowledgements}
I am deeply grateful to my supervisor Atsushi Shiho for 
his invaluable support and also for helpful suggestions on section~\ref{sec:pproof}. 
I also thank Tetsushi Ito for the permission to use his unpublished master's thesis 
as an appendix. 
I would also like to thank Shouhei Ma and Takeshi Saito
for their helpful comments.

\end{document}